\documentclass[11pt, a4paper]{amsart}
\newtheorem{prop}{Proposition}[section]
\newtheorem{lema}[prop]{Lemma}
\newtheorem{teo}[prop]{Theorem}
\newtheorem{corolario}[prop]{Corollary}
\newtheorem{defi}[prop]{Definition}

\newtheorem{remark}[prop]{\sc Remark}
\newtheorem{question}[prop]{\sc Question}
\newtheorem{example}[prop]{\sc Example}
\newtheorem{obs}[prop]{\sc Observation}

\newcommand{\supp}{\mbox{supp}}
\newcommand{\uno}{1\!\!1}

\title[Valuations on Star Bodies ]{Radial continuous valuations on star bodies and star sets}

\author{Pedro Tradacete}

\address{Mathematics Department\\ Universidad Carlos III de Madrid \\  28911 Legan\'es (Madrid). Spain.}
\email{ptradace@math.uc3m.es }

\thanks{Support of Spanish MINECO under grants MTM2012-31286 and MTM2013-40985 is gratefully acknowledged.}

\author{Ignacio Villanueva}

\address{Departamento de An\'alisis Matem\'atico \\
Facultad de Matem\'aticas \\ Universidad Complutense de Madrid \\
Madrid 28040}

\email{ignaciov@mat.ucm.es}

\thanks{Partially supported by grants MTM2014-54240-P, funded by MINECO and QUITEMAD+-CM, Reference: S2013/ICE-2801, funded by Comunidad de Madrid}

\begin{document}

\begin{abstract}
We show that a  radial continuous valuation defined on the $n$-dimensional star bodies extends uniquely to a continuous valuation on the $n$-dimensional bounded star sets. Moreover, we provide an integral representation of every such valuation, in terms of the radial function, which is valid on the dense subset of the simple Borel star sets. We also show that every radial continuous valuation defined on the $n$-dimensional star bodies can be decomposed as a sum $V=V^+-V^-$, where both $V^+$ and $V^-$ are positive radial continuous valuations.
\end{abstract}

\subjclass[2010]{52B45, 52A30}

\keywords{Convex geometry; Star bodies; Valuations}

\maketitle

\section{Introduction}

This note continues the study of valuations on star bodies started in \cite{Vi}. A valuation is a function $V$, defined on a class of sets, with the property that
$$
V(A\cup B)+V(A\cap B)=V(A)+V(B).
$$
As a generalization of the notion of measure, valuations have become a relevant area of study in Convex Geometry. In fact, this notion played a critical role in M. Dehn's solution to Hilbert's third problem, asking whether an elementary definition for volume of polytopes was possible. See, for instance, \cite{Lu1}, \cite{Lu2} and the references there included for a  broad vision of the field.

Valuations on convex bodies belong to the Brunn-Minkowski Theory. This theory has been extended in several important ways, and in particular, to the dual Brunn-Minkowski Theory, where convex bodies, Minkowski addition and Hausdorff metric  are replaced by star bodies, radial addition and  radial metric, respectively. The dual Brunn-Minkowski theory, initiated in \cite{Lut_mv1}, has been broadly developed and successfully applied to several areas, such as integral geometry,  local theory of Banach spaces and geometric tomography (see \cite{DGP}, \cite{Gabook} for these and other applications). In particular, it played a key role in the solution of the Busemann-Petty problem \cite{Ga1}, \cite{Ga2}, \cite{Zh}.

D. A. Klain initiated in \cite{Klain96}, \cite{Klain97} the study of rotationally invariant valuations on a certain class of star sets, namely those whose radial function is $n$-th power integrable.

In \cite{Vi}, the second named author started the study of valuations on star bodies, characterizing positive rotational invariant valuations as those described by certain integral representation. In particular, this representation implies that those valuations can be extended to all bounded star sets.

In this note we continue the study of continuous valuations on star bodies, dropping the assumption of rotational invariance. Our main result states that every such valuation can be extended to a continuous valuation on the bounded star sets, and this extension provides an integral representation of the valuation on the star sets with simple Borel radial function. Since every star body (and actually every star set) can be approximated in the radial metric by star sets with simple Borel radial function, for many applications this representation will be useful.

\smallskip

For the sake of clarity, we split the result in two statements. 

\begin{teo}\label{t:extensiontoborel}
Let $V:\mathcal S_0^n\longrightarrow \mathbb R$ be a radial continuous valuation on the $n$-dimensional star bodies $\mathcal S_0^n$. Then, there exists a unique radial continuous extension of $V$ to a valuation $\overline V:\mathcal S_b^n\longrightarrow\mathbb R$ on the bounded Borel star sets of $\mathbb R^n$.
\end{teo}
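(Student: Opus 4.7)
The plan is to reduce to a positive monotone valuation and then define the extension as the supremum of $V$ over inscribed star bodies. By the decomposition $V=V^+-V^-$ recorded in the abstract, it suffices to extend a positive (hence monotone) radial continuous $V$, setting $\overline V = \overline{V^+}-\overline{V^-}$ in the general case. For such $V$, I would define
$$
\overline V(A) := \sup\bigl\{V(K) : K \in \mathcal S_0^n,\ K\subseteq A\bigr\}, \qquad A\in\mathcal S_b^n.
$$
Enclosing the bounded set $A$ in a ball $B_A$ gives $\overline V(A)\le V(B_A)<\infty$, and monotonicity of $V$ on $\mathcal S_0^n$ ensures $\overline V$ coincides with $V$ on $\mathcal S_0^n$.

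The crux is verifying the valuation identity for $\overline V$, using $\rho_{A_1\cup A_2}=\max(\rho_{A_1},\rho_{A_2})$ and $\rho_{A_1\cap A_2}=\min(\rho_{A_1},\rho_{A_2})$. The inequality $\overline V(A_1)+\overline V(A_2)\le \overline V(A_1\cup A_2)+\overline V(A_1\cap A_2)$ is easy: pick $K_i\subseteq A_i$ nearly realizing the suprema; then $K_1\cup K_2\subseteq A_1\cup A_2$, $K_1\cap K_2\subseteq A_1\cap A_2$, and the valuation identity for $V$ on star bodies gives the bound. The reverse inequality is more delicate: starting from star bodies $M\subseteq A_1\cup A_2$ and $N\subseteq A_1\cap A_2$ realizing the suprema up to $\epsilon$, one must cut $M$ along the (Borel) dominance regions of $\rho_{A_1}$ versus $\rho_{A_2}$ into star sets $M_1,M_2$ with $M_1\cup M_2=M$ and $M_1\cap M_2\supseteq N$, and then regularize these Borel star sets back into star bodies using the continuity of $V$. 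Continuity of $\overline V$ in the radial metric then follows from a sandwich argument based on the monotonicity of $\overline V$ and the radial continuity of $V$ on small dilates/contractions of an enclosing star body.

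The main obstacle, I expect, lies in the uniqueness together with the regularization step above. Any continuous extension agrees with $\overline V$ on $\mathcal S_0^n$, but since star bodies are \emph{not} uniformly dense in $\mathcal S_b^n$, density alone does not suffice. The plan is to route the argument through the dense subset of simple Borel star sets mentioned in the introduction: a simple Borel star set can be assembled from ``conical wedges'' by finitely many unions and intersections, each of which is determined by the valuation identity in terms of values on star bodies; continuity in the radial metric then propagates the agreement to all of $\mathcal S_b^n$. Executing this combinatorial propagation and the Borel-to-continuous regularization within controlled error, so that each passage respects both the valuation identity and the radial continuity, is the step I anticipate will absorb most of the technical work.
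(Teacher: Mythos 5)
Your opening reduction matches the paper's (it proves the Jordan decomposition $V=V^+-V^-$ first and then treats positive valuations), but the parenthetical ``positive (hence monotone)'' is false, and it breaks your construction at the very first step. Take $\tilde V(f)=\int_{S^{n-1}}\sin^2\big(f(t)\big)\,dm(t)$: this is a positive radial continuous valuation with $V(\{0\})=0$ (the valuation identity holds pointwise since $\{(f\vee g)(t),(f\wedge g)(t)\}=\{f(t),g(t)\}$), yet it is not monotone, and your definition $\overline V(A)=\sup\{V(K):K\subseteq A\}$ fails even to restrict to $V$ on star bodies: $V(\pi B_n)=0$ while the supremum over inscribed bodies is at least $V(\tfrac{\pi}{2}B_n)=1$. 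The $V^+$ built in the proof of Theorem \ref{t:jordan} happens to be monotone by construction, but $V^-=V^+-V$ is only known to be positive, so the reduction to the monotone case is not justified. Moreover, even for monotone $V$ an inner supremum of this kind is essentially an inner measure construction, and additivity (the valuation identity) for it does not come for free.

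The deeper gap is that both your continuity argument and your two ``regularization'' steps silently assume that $V$ is \emph{uniformly} continuous on bounded sets, which is exactly what is not available -- it is the open Question \ref{q:1}, and the paper states explicitly that circumventing it is the main technical difficulty. Concretely: the sandwich between the star sets with radial functions $(\rho_A-\delta)\vee 0$ and $\rho_A+\delta$ gives continuity of $\overline V$ at $A$ only if $\sup\{V(K):\rho_K\le\rho_A+\delta\}\to\overline V(A)$ as $\delta\to0$; pushing such a $K$ down to $K'$ with $\rho_{K'}=(\rho_K-\delta)\vee0\le\rho_A$ controls $V(K)-V(K')$ only through a modulus of continuity that is uniform over all such $K$, which pointwise continuity does not provide. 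The same objection applies to ``regularizing Borel star sets back into star bodies using the continuity of $V$'' in your proof of the valuation identity, and to the uniqueness step: the value of a continuous extension on a wedge $\lambda\chi_A$ with $A$ Borel is \emph{not} determined by finitely many applications of the valuation identity to star bodies, since such wedges are not uniform limits of star bodies. The paper's substitute for uniform continuity is the whole apparatus of Sections \ref{controlmeasure} and \ref{extension}: control measures $\mu_\lambda$, representing measures $\nu_\lambda$ built from a content, the definition $\overline V(\sum a_i\chi_{A_i})=\sum\nu_{a_i}(A_i)$ on simple functions, and the proof -- via the outer-parallel-band Lemma \ref{rims}, Lemma \ref{l:controldefunciones}, the Dugundji extender, and Proposition \ref{p:cauchysequences} -- that this map preserves Cauchy sequences and hence extends to $B(\Sigma_n)^+$. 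Without an argument playing that role, your construction does not close.
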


\begin{teo}\label{t:integral}
Let $V:\mathcal S_0^n\longrightarrow\mathbb R$ be a radial continuous valuation, and let $\overline{V}$ be its extension mentioned in Theorem \ref{t:extensiontoborel}. Then, there exists a measure $\mu$ defined on the Borel sets of $S^{n-1}$ and  a function $K:\mathbb R^+\times S^{n-1}\rightarrow \mathbb R$ such that, for every star body $L$ whose radial function $\rho_L$ is a simple function,  
we have
$$
\overline V(L)=\int_{S^{n-1}} K(\rho_L(t),t)d\mu(t).
$$
\end{teo}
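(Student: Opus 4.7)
My plan is to recover the integral representation by first producing, for each height $a>0$, a signed Borel measure $\nu_a$ on $S^{n-1}$ that encodes how $\overline V$ behaves on ``spherical sectors of height $a$'', and then assembling a single dominating measure $\mu$ together with a density $K(a,\cdot)=d\nu_a/d\mu$. For $a>0$ and a Borel set $A\subseteq S^{n-1}$, let $L_{A,a}\in\mathcal S_b^n$ denote the bounded star set with radial function $a\chi_A$, and set
$$\nu_a(A):=\overline V(L_{A,a})-\overline V(\{0\}).$$
The valuation identity applied to $L_{A,a}\cup L_{B,a}=L_{A\cup B,a}$ and $L_{A,a}\cap L_{B,a}=\{0\}$ (for disjoint Borel $A,B$) immediately yields finite additivity of $\nu_a$. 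To upgrade to countable additivity, I would exploit the continuity of $\overline V$ on $\mathcal S_b^n$ provided by Theorem \ref{t:extensiontoborel} to show $\nu_a(A_n)\to 0$ whenever $A_n\downarrow\emptyset$.

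Next I construct the dominating measure. Because $V$ is radial continuous and the radial distance between $L_{A,a_n}$ and $L_{A,a}$ equals $|a_n-a|$, the map $a\mapsto\nu_a(A)$ is continuous on $\mathbb R^+$ for every Borel $A$. Enumerating $\mathbb Q^+=\{q_n\}_{n\geq 1}$, I set
$$\mu:=\sum_{n\geq 1}2^{-n}\,\frac{|\nu_{q_n}|}{1+|\nu_{q_n}|(S^{n-1})},$$
a finite positive Borel measure on $S^{n-1}$. If $\mu(B)=0$ then $\nu_{q_n}(B)=0$ for every $n$, and continuity in $a$ forces $\nu_a(B)=0$ for every $a>0$; hence $\nu_a\ll\mu$ and the Radon-Nikodym theorem produces densities $k_a:=d\nu_a/d\mu\in L^1(\mu)$.

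To finish, absorb the constant $\overline V(\{0\})$ into the integral by fixing any measurable $g\colon S^{n-1}\to\mathbb R$ with $\int g\,d\mu=\overline V(\{0\})$ (for instance the constant $\overline V(\{0\})/\mu(S^{n-1})$) and defining $K(a,t):=k_a(t)+g(t)$. If $L$ is a star body with simple radial function $\rho_L=\sum_{i=1}^N a_i\chi_{A_i}$ on a Borel partition $\{A_i\}$ of $S^{n-1}$ with $a_i>0$, then writing $L=\bigcup_i L_{A_i,a_i}$ with all pairwise intersections equal to $\{0\}$ and iterating the valuation identity yields
$$\overline V(L)=\sum_{i=1}^N\overline V(L_{A_i,a_i})-(N-1)\overline V(\{0\})=\sum_{i=1}^N\nu_{a_i}(A_i)+\overline V(\{0\}).$$
Substituting $\nu_{a_i}(A_i)=\int_{A_i}k_{a_i}\,d\mu$ and folding in $g$ gives $\overline V(L)=\int_{S^{n-1}}K(\rho_L(t),t)\,d\mu(t)$, as required.

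The main obstacle I foresee is the countable additivity of $\nu_a$ in the first step: radial continuity of $V$ only controls sequences of star sets in the sup-norm of radial functions, whereas $A_n\downarrow\emptyset$ yields merely pointwise monotone convergence of $\chi_{A_n}$. The argument must therefore extract from Theorem \ref{t:extensiontoborel} a stronger continuity property of $\overline V$ along monotone sequences in $\mathcal S_b^n$, which is the delicate input required to promote the finitely additive $\nu_a$ to a genuine signed measure on the Borel $\sigma$-algebra of $S^{n-1}$.
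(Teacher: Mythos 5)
The decisive step in your plan --- countable additivity of $\nu_a$ --- is exactly where the argument breaks down, and you have correctly sensed this yourself. The only continuity that Theorem \ref{t:extensiontoborel} provides for $\overline V$ is radial continuity, i.e.\ continuity with respect to $\|\cdot\|_\infty$ of radial functions. If $A_m\downarrow\emptyset$ with every $A_m$ nonempty, then $\|a\chi_{A_m}\|_\infty=a$ for all $m$, so $a\chi_{A_m}$ does not converge to $0$ in sup norm and radial continuity says nothing about $\overline V(L_{A_m,a})$. No ``stronger continuity along monotone sequences'' can be extracted from the properties you are using: a sup-norm continuous positive linear functional on $B(\Sigma_n)^+$ is automatically a valuation, yet the associated set function need only be finitely additive. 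So finite additivity plus sup-norm continuity genuinely does not yield countable additivity, your $\nu_a$ cannot be promoted to Borel measures by the tools you invoke, and the Radon--Nikodym step (and the total variations $|\nu_{q_n}|$) has no foundation.

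The paper proceeds in the opposite order: the measures are built first, from the valuation on \emph{continuous} functions only, and $\overline V$ on simple star sets is then \emph{defined} through them, so countable additivity never has to be read off from $\overline V$. Concretely, after subtracting $V(\{0\})$ one applies the Jordan-type decomposition $V'=V^+-V^-$ of Theorem \ref{t:jordan} (positivity is essential for the next step), and for each $\lambda$ one defines a regular content on closed sets, $\zeta_\lambda(K)=\inf\{\tilde V(f):\ K\prec f/\lambda,\ \|f\|_\infty\le\lambda\}$, whose induced regular Borel measure $\nu_\lambda$ is countably additive by the standard Halmos construction, with the control measures $\mu_\lambda$ of Section \ref{controlmeasure} supplying the needed estimates. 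Since $\overline V(\sum_i a_i\chi_{A_i})$ is by definition $\sum_i\nu_{a_i}(A_i)$, the representation on simple functions then reduces to the absolute-continuity and Radon--Nikodym argument much as you describe (the paper takes $\mu$ normalized to be a probability measure so that the additive constant $V(\{0\})$ can be absorbed into $K$; note that your choice of $g$ silently assumes $\mu(S^{n-1})>0$). To salvage your route you would have to establish regularity and countable additivity of your $\nu_a$ by approximating $\chi_A$ from inside by compact sets and from outside by open sets via continuous functions --- which is precisely the content construction of Section \ref{controlmeasure} --- or simply quote it.
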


To prove these results, we need a Jordan-like decomposition  which will probably find applications elsewhere.   We show that every continuous valuation $V:\mathcal S_0^n\longrightarrow \mathbb R$ on the $n$-dimensional star bodies can be decomposed as the difference of two positive continuous valuations. 
With this structural result at hand, the study of  continuous valuations on star bodies reduces to the simpler case of positive continuous valuations.

\begin{teo}\label{t:jordan}
Let $V:\mathcal S_0^n\longrightarrow \mathbb R$ be a radial continuous valuation on the $n$-dimensional star bodies $\mathcal S_0^n$ such that $V(\{0\})=0$. Then, there exist two radial continuous valuations $V^+, V^-:\mathcal S_0^n\longrightarrow \mathbb R_+$ such that $V^+(\{0\})=V^-(\{0\})=0$ and such that
$$
V=V^+-V^-.
$$
Moreover, if $V$ is rotationally invariant, so are $V^+$ and $V^-$.
\end{teo}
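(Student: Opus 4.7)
The plan is to carry out a Jordan-type decomposition, adapting the classical signed-measure construction to the lattice $(\cup, \cap)$ of star bodies. For each $L \in \mathcal{S}_0^n$ I would set
\[
V^+(L) := \sup\{V(K) : K \in \mathcal{S}_0^n,\ K \subseteq L\}, \qquad V^-(L) := V^+(L) - V(L).
\]
Taking $K = \{0\}$ gives $V^+(L) \geq V(\{0\}) = 0$; taking $K = L$ gives $V^-(L) \geq 0$; both vanish at $\{0\}$; the identity $V = V^+ - V^-$ is built in; and since any rotation $\phi$ bijects $\{K : K \subseteq L\}$ with $\{K : K \subseteq \phi L\}$, rotational invariance is inherited by $V^\pm$.

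The first technical task is to show $V^+(L) < \infty$. The set of sub-star-bodies of $L$ is bounded but not compact in the radial metric, so continuity alone is insufficient. One route is a contradiction argument: if $V(K_n) \to \infty$ with $K_n \subseteq L$, exploit the valuation identity $V(K_n \cup K_m) + V(K_n \cap K_m) = V(K_n) + V(K_m)$ to extract, by a diagonal procedure, a sequence that is Cauchy in the radial metric while $V$ continues to diverge on it, contradicting continuity.

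The main obstacle is verifying the valuation identity for $V^+$. The inequality $V^+(A \cup B) + V^+(A \cap B) \geq V^+(A) + V^+(B)$ is immediate: for $K \subseteq A$ and $L \subseteq B$ we have $K \cup L \subseteq A \cup B$ and $K \cap L \subseteq A \cap B$, so $V(K) + V(L) = V(K \cup L) + V(K \cap L) \leq V^+(A \cup B) + V^+(A \cap B)$, and one takes suprema. The reverse inequality is the true difficulty. Given $\varepsilon$-optimal $M \subseteq A \cup B$ and $N \subseteq A \cap B$, the natural pairing $K := M \cap A$ and $L := (M \cap B) \cup N$ satisfies $K \subseteq A$, $L \subseteq B$, $K \cup L = M \cup N$, and (using $N \subseteq A \cap B$) $K \cap L = M \cap A \cap B$, so the valuation identity for $V$ yields
\[
V(K) + V(L) = V(M) + V(N) + V(M \cap A \cap B) - V(M \cap N).
\]
The remaining issue is to make the error $V(M \cap A \cap B) - V(M \cap N)$ non-negative, or negligible in the limit. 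Unlike the measure-theoretic Hahn decomposition, where the negative set may be enlarged freely, enlarging $N$ here can decrease $V(N)$, so a naive fix fails. I would attempt an iterative refinement: replace $N$ by $N \cup (M \cap A \cap B)$, re-optimize $M$ inside $A \cup B$, and iterate, using continuity of $V$ to force the error term to zero. Once the valuation identity for $V^+$ is established, continuity of $V^+$ in the radial metric follows by a standard approximation argument (for $L_n \to L$ radially, each $K \subseteq L$ is uniformly close to $K \cap L_n \subseteq L_n$), and the remaining properties of $V^\pm$ are then automatic.
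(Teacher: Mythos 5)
Your definition of $V^+$ is exactly the one the paper uses (in radial-function form, $\tilde V^+(f)=\sup\{\tilde V(g):0\le g\le f\}$), and your treatment of finiteness, of the easy inequality $V^+(A\cup B)+V^+(A\cap B)\ge V^+(A)+V^+(B)$, of continuity, and of rotational invariance all match the paper's argument in substance. The gap is precisely where you locate it: the reverse inequality. Your pairing $K=M\cap A$, $L=(M\cap B)\cup N$ leaves the error term $V(M\cap A\cap B)-V(M\cap N)$, and nothing in your toolkit controls it. The proposed fix --- replace $N$ by $N\cup(M\cap A\cap B)$ and iterate --- does not close the loop: after the replacement the error term vanishes formally, but only because $V(N\cup(M\cap A\cap B))=V(N)+V(M\cap A\cap B)-V(M\cap N)$ by the valuation identity, i.e.\ the uncontrolled quantity has merely been absorbed into $V(N')$, which may now be far below $V^+(A\cap B)$; re-optimizing $M$ then regenerates an error term of the same kind, and no mechanism is given that forces these errors to zero. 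The obstruction is that the purely lattice-theoretic pairing never uses the topology of $S^{n-1}$, whereas some geometric input is needed to make the cross terms negligible.

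The paper's resolution of this step is genuinely different and is the key idea you are missing. Working with radial functions, one sets $A=\{t: f_1(t)\ge f_2(t)\}$ (closed) and $B=\{t:f_1(t)<f_2(t)\}$ (open), slightly shrinks the near-optimal $g\le f_1\vee f_2$ to $g'=(g-\tfrac{\delta}{2})\vee 0$ so that $g'\le f_1$ on a whole neighbourhood $J(A,\omega)=A\cup A_\omega$ of $A$, and then uses the lattice partition of unity of Lemma \ref{split} subordinate to the open cover $S^{n-1}=J(A,\omega)\cup B$ to split $g'=g_1'\vee g_2'$ and $h=h_1\vee h_2$. The recombined functions satisfy $g_1'\vee h_2\le f_1$ and $g_2'\vee h_1\le f_2$, and every cross term ($g_1'\wedge g_2'$, $h_1\wedge h_2$, $g_1'\wedge h_2$, $g_2'\wedge h_1$) is supported in the thin outer parallel band $A_\omega$. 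These are then killed by Lemma \ref{rims}, which shows $\sup\{|\tilde V(f)|: f\prec A_\omega,\ \|f\|_\infty\le\lambda\}\to 0$ as $\omega\to 0$ --- a statement that itself rests on boundedness on bounded sets (Lemma \ref{l:bbs}, proved by the bisection argument your ``diagonal procedure'' gestures at). Without an analogue of this band estimate, your error term remains uncontrolled, so as written the proof of the valuation identity for $V^+$ is incomplete.
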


In the next paragraphs we describe the structure of the paper.

In Section \ref{sectionnotation} we describe our notation, framework and some known facts that we will need. Then, in Section \ref{preliminaryresults} we show that continuous valuations are bounded on bounded sets, and prove some preliminary results needed later. In Section \ref{s:jordan} we prove Theorem \ref{t:jordan}. As a simple application we solve a question left open in \cite{Vi}.

Section \ref{controlmeasure} is devoted to the construction of control and representing measures associated to a general valuation. It is based on similar work done in \cite{Vi}. Once we have the representing measures, we can extend $V$ to $\overline{V}$ defined on the simple star sets. 

In Section \ref{extension} we prove our main results, Theorems \ref{t:extensiontoborel} and \ref{t:integral}. This is the most technical part of the paper. The main difficulties follow from the fact that we do not know whether $V$, or $\overline{V}$, are uniformly continuous on bounded sets. So, we have to prove in this particular case that $V$  preserves Cauchy sequences and, therefore, can be extended to the bounded star sets.  

Finally, in Section \ref{s:openquestions} we relate our results with existing previous work (\cite{CF}, \cite{FK}, \cite{FK:69}).  In these papers, uniform continuity on bounded sets is assumed a priori, so, much of our difficulties are not present.

\section{Notation and known facts}\label{sectionnotation}

A set $L\subset \mathbb R^n$ is a {\em star set} if it contains the origin and every line through $0$ that meets $L$ does so in a (possibly degenerate) line segment. Let $\mathcal S^n$  denote the set of the star sets of $\mathbb R^n$.

Given $L\in \mathcal S^n$, we define its {\em radial function} $\rho_L$ by
$$
\rho_L(t)=  \sup \{c\geq 0 \, : \, ct\in L\},
$$
for each $t\in\mathbb R^n$. Clearly, radial functions are completely characterized by their restriction to $S^{n-1}$, the euclidean unit sphere in $\mathbb R^n$, so from now on we consider them defined on $S^{n-1}$.

A star set $L$ is called a {\em star body} if $\rho_L$ is continuous. Conversely, given a positive continuous function $f:S^{n-1}\longrightarrow \mathbb R^+=[0,\infty)$ there exists a star body $L_f$ such that $f$ is the radial function of $L_f$.
We denote by $\mathcal S_0^n$ the set of $n$-dimensional star bodies and we denote by $C(S^{n-1})^+$ the set of positive continuous functions on $S^{n-1}$.

Analogously, a star set $L$ is a {\em bounded Borel star set} if $\rho_L$ is a bounded Borel function. Note that star bodies are always bounded. We denote by $\mathcal S_b^n$ the set of $n$-dimensional bounded Borel star sets, $\Sigma_n$ the $\sigma$-algebra of Borel subsets of $S^{n-1}$, and $B(\Sigma_n)^+$ the set of positive bounded Borel functions on $S^{n-1}$.

Given two sets $K,L\in \mathcal S^n$, we define their {\em radial sum} $K\tilde{+}L$ as the star set whose radial function is $\rho_K+\rho_L$. Note that $K\tilde{+}L\in \mathcal S_0^n$ (respectively, $\mathcal S_b^n$) whenever $K,L\in \mathcal S_0^n$ (respectively, $\mathcal S_b^n$).

The dual analog for the Hausdorff metric of convex bodies is the so called {\em radial metric}, which is defined by
$$
\delta(K,L)=\inf\{\lambda\geq 0 : K\subset L\tilde{+} \lambda B_n, L\subset K\tilde{+} \lambda B_n\},
$$
where $B_n$ denotes the euclidean unit ball of $\mathbb R^n$. It is easy to check that
$$
\delta(K,L)=\|\rho_K-\rho_L\|_\infty.
$$

\smallskip

An application $V:\mathcal S^n\longrightarrow \mathbb R$ is a {\em valuation} if for any $K,L\in\mathcal S^n$,
$$
V(K\cup L)+V(K\cap L)=V(K)+ V(L).
$$
It is clear that a linear combination of valuations is a valuation.

\smallskip

Given two functions $f_1, f_2\in B(S^{n-1})^+$, we denote their maximum and minimum by
$$
(f_1\vee f_2)(t)=\max \{f_1(t), f_2(t)\},
$$
$$
(f_1\wedge f_2)(t)=\min \{f_1(t), f_2(t)\}.
$$

Given $K,L\in\mathcal S_0^n$ (respectively $\mathcal S_b^n$), both $K\cup L$ and $K\cap L$ are in $\mathcal S_0^n$ (respectively $\mathcal S_b^n$), and it is easy to see that
$$
\rho_{K\cup L}=\rho_K\vee \rho_L, \hspace{1cm} \rho_{K\cap L}=\rho_K\wedge \rho_L.
$$

With this notation, a valuation $V:\mathcal S_0^n\rightarrow \mathbb R$ induces a function $\tilde V:C(S^{n-1})^+\rightarrow \mathbb R$ given by
$$
\tilde V(f)=V(L_f),
$$
where $L_f$ is the star body whose radial function satisfies $\rho_{L_f}=f$. If $V$ is continuous, then  $\tilde V$ is continuous with respect to the $\|\cdot\|_\infty$ norm in $C(S^{n-1})^+$ and satisfies
$$
\tilde V(f)+\tilde V(g)=\tilde V(f\vee g)+\tilde V(f\wedge g)
$$
for every $f,g\in C(S^{n-1})^+$. Conversely, every such function $\tilde V$ induces a radial continuous valuation on $\mathcal S_0^n$. Similarly, a valuation $V:\mathcal S_b^n\rightarrow \mathbb R$ induces a function $\tilde V:B(\Sigma_n)^+\rightarrow \mathbb R$ with analogous properties, and vice versa.

Given  $A\subset S^{n-1}$, we denote the closure of $A$ by $\overline{A}$.
Given  a function $f:S^{n-1} \longrightarrow \mathbb R$, we define the support of $f$ by $$supp(f)=\overline{\{t\in S^{n-1} : f(t)\not = 0\}},$$ and for any set $G\subset S^{n-1}$, we will write $f\prec G$ if $\supp(f)\subset G$. Conversely, $G\prec f$ denotes that $f(t)\geq1$ for every $t\in G$. Throughout, given $A\subset S^{n-1}$, $\chi_A:S^{n-1}\longrightarrow \mathbb R$ denotes the characteristic function of $A$, and f$\uno=\chi_{S^{n-1}}$ denotes the function constantly equal to 1.

For completeness, we state now a result of \cite{Vi} which will be needed later in several ocassions.

\begin{lema}\label{split}\cite[Lemmata 3.3 and  3.4]{Vi}
Let $\{G_i: i\in I\}$ be a family of open subsets of $S^{n-1}$. Let $G=\cup_{i\in I} G_i$. Then, for every $i\in I$ there exists a function $\varphi_i: G \longrightarrow [0,1]$ continuous in $G$ satisfying  $\varphi_i\prec G_i$ and such that $\bigvee_{i\in I} \varphi_i=\uno$ in $G$. Moreover,  let  $f\in C(S^{n-1})^+$ satisfy $f\prec G$. Then, for every $i\in I$, the function  $f_i=\varphi_if$ belongs to $ C(S^{n-1})^+$. Also,  $f_i\prec G_i$ and  $\bigvee_{i\in I} f_i=f$. In particular, for every $i\in I$,  $0\leq f_i\leq f$.
\end{lema}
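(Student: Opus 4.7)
The plan is to construct the $\varphi_i$ as a sup-partition of unity subordinate to $\{G_i\}$: unlike the usual $\sum\varphi_i=1$ type of partition, we want each point of $G$ to have some $\varphi_i$ that \emph{attains} the value $1$. This will be achieved by combining a shrinking of the cover with an explicit Urysohn-type formula.

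Since $G$ is an open subset of the compact metric space $S^{n-1}$, it is paracompact and Hausdorff, so that any open cover admits a shrinking with the same index set. Applying this to $\{G_i\}_{i\in I}$ we obtain an open cover $\{V_i\}_{i\in I}$ of $G$ with $\overline{V_i}\subset G_i$. Normality then lets us choose, for each $i$, an open $W_i$ with $\overline{V_i}\subset W_i\subset\overline{W_i}\subset G_i$. Now define
$$
\varphi_i(t)=\frac{d(t,\,G\setminus W_i)}{d(t,\,\overline{V_i})+d(t,\,G\setminus W_i)},\qquad t\in G,
$$
with the conventions $\varphi_i\equiv 0$ if $V_i=\emptyset$ and $\varphi_i\equiv \uno$ if $W_i=G$. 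Because $\overline{V_i}$ and $G\setminus W_i$ are disjoint closed subsets of $G$, the denominator is strictly positive, so $\varphi_i:G\to[0,1]$ is continuous, equals $1$ on $\overline{V_i}$, and vanishes on $G\setminus W_i$. Hence $\supp(\varphi_i)\subset\overline{W_i}\subset G_i$, i.e.\ $\varphi_i\prec G_i$. Since the $V_i$ cover $G$, for every $t\in G$ there is some $i$ with $t\in V_i$, so $\varphi_i(t)=1$, and therefore $\bigvee_{i\in I}\varphi_i=\uno$ on $G$.

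For the moreover part, given $f\in C(S^{n-1})^+$ with $f\prec G$, set $f_i=\varphi_i f$ on $G$ and extend by $0$ to $S^{n-1}$. The extension is continuous on $S^{n-1}$ because $\supp f$ is a compact subset of the open set $G$, so $f$ vanishes on a neighbourhood of $\partial G$, and therefore so does $f_i$; the estimate $0\le f_i\le f$ is immediate from $\varphi_i\in[0,1]$. The support inclusion $\supp(f_i)\subset\supp(\varphi_i)\subset G_i$ gives $f_i\prec G_i$. Finally, using $f\ge 0$ and $\bigvee_i\varphi_i=\uno$ on $G$, pointwise computation yields $\bigvee_i f_i=f\cdot\bigvee_i\varphi_i=f$ on $G$; off $G$ both sides vanish.

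The step I expect to be the main obstacle is the shrinking: we need a family $\{V_i\}$ indexed by the \emph{same} set $I$ (not a refinement with new indices) in order to produce exactly one $\varphi_i$ per $G_i$. This is precisely the content of the classical shrinking lemma for paracompact Hausdorff spaces, which is available here by the metrizability of $S^{n-1}$; once it is in hand, the rest is a routine calculation with distance functions.
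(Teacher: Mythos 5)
Your proof is correct, but note that the paper offers no proof of this lemma at all: it is imported verbatim from \cite[Lemmata 3.3 and 3.4]{Vi}, so the comparison has to be with the argument there. The construction in \cite{Vi} is more direct: one sets $\hat\varphi_i(t)=d(t,S^{n-1}\setminus G_i)$ and $\varphi_i=\hat\varphi_i/\bigvee_{j}\hat\varphi_j$, using that each $\hat\varphi_i$ is $1$-Lipschitz (hence so is the supremum) and that the denominator is strictly positive on $G$; the identity $\bigvee_i\varphi_i=\uno$ is then immediate as a pointwise supremum, with no shrinking lemma needed. Your route through a same-index shrinking $\{V_i\}$ plus Urysohn-type quotients of distance functions is heavier machinery, but it buys something the quotient construction does not: for every $t\in G$ the supremum $\bigvee_i\varphi_i(t)=1$ is actually \emph{attained} by some $\varphi_i$, which is the stronger ``sup-partition'' property you announce (irrelevant for the finite families used in this paper, where sup equals max anyway, but genuinely stronger for infinite $I$). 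Two small points worth making explicit. First, the condition $\varphi_i\prec G_i$ cannot be read with $\supp$ taken in $S^{n-1}$ (take $I$ a singleton and $G_1=G$ non-closed: then $\varphi_1=\uno$ on $G$ and $\overline{G}\not\subset G$); both your construction and that of \cite{Vi} deliver the support inclusion relative to $G$, which is the intended reading and is all that is used. Second, your one-line justification of $f_i\prec G_i$ does need the extra observation that $\overline{\{f_i\neq0\}}\subset\supp(f)\cap\overline{W_i}\subset G\cap\overline{W_i}=\overline{W_i}^{\,G}\subset G_i$, i.e.\ that the compact set $\supp(f)\subset G$ confines the $S^{n-1}$-closure to $G$, where the relative closure is controlled; this is the same observation you already use for continuity of the extension by zero, so it is a presentational gap rather than a mathematical one.
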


It should be noted that most of the results stated in this work refering to $C(S^{n-1})$ could also be given for $C(K)$ where $K$ is a compact metrizable space with the same proofs. An adaptation of the results to the non-metrizable setting should also be possible by a standard application of uniformities (cf. \cite {E}).

\section{Preliminary results}\label{preliminaryresults}

To prove our main results we will need to control the maximum value of $V$ on certain sets. The first step in this direction is to show that $V$ is {\em bounded on bounded sets}:

We say that a valuation $V:\mathcal S^n_0\longrightarrow \mathbb R$ is  bounded on bounded sets  if for every $\lambda>0$ there exists  a real number $K>0$ such that, for every star body $L\subset \lambda B_n$, $|V(L)|\leq K.$

Equivalently, $V$ is bounded on bounded sets if for every $\lambda>0$ there exists $K>0$ such that for every $f\in C(S^{n-1})^+$ with $\|f\|_\infty\leq \lambda$ we have $\tilde{V}(f)\leq K$.

\begin{lema}\label{l:bbs}
Every radial continuous valuation $V:\mathcal S^n_0\longrightarrow \mathbb R$ is bounded on bounded sets.
\end{lema}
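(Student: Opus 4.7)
The plan is to translate the statement in terms of the induced functional $\tilde V : C(S^{n-1})^+ \to \mathbb R$ and then derive a uniform bound on $\{f \in C(S^{n-1})^+ : \|f\|_\infty \leq \lambda\}$ by telescoping the valuation identity along a partition of the interval $[0, \lambda]$.

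First, I would use the continuity of $\tilde V$ at each constant function $\alpha \uno$ with $\alpha \in [0, \lambda]$ to obtain $\delta_\alpha > 0$ and $M_\alpha > 0$ such that $|\tilde V(g)| \leq M_\alpha$ whenever $g \in C(S^{n-1})^+$ satisfies $\|g - \alpha \uno\|_\infty < \delta_\alpha$. Since the segment $\{\alpha \uno : \alpha \in [0, \lambda]\}$ is compact in $C(S^{n-1})$, the open cover by the balls of radius $\delta_\alpha / 2$ admits a finite subcover with a positive Lebesgue number. This yields a partition $0 = \beta_0 < \beta_1 < \cdots < \beta_N = \lambda$ together with constants $M_0, \ldots, M_{N-1}$ such that every $g \in C(S^{n-1})^+$ with $\beta_j \leq g \leq \beta_{j+1}$ satisfies $|\tilde V(g)| \leq M_j$.

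The core of the argument is then a telescoping application of the valuation identity. Given $f \in C(S^{n-1})^+$ with $\|f\|_\infty \leq \lambda$, I would set $f_j = f \wedge \beta_j \uno$, so $f_0 = 0$ and $f_N = f$. Applying the identity to $f_{j+1}$ and $\beta_j \uno$, the facts that $f_{j+1} \wedge \beta_j \uno = f_j$ and that $f_{j+1} \vee \beta_j \uno$ takes values in $[\beta_j, \beta_{j+1}]$ yield
$$
\tilde V(f_{j+1}) - \tilde V(f_j) = \tilde V(f_{j+1} \vee \beta_j \uno) - \tilde V(\beta_j \uno),
$$
whose absolute value is at most $M_j + |\tilde V(\beta_j \uno)|$. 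Summing over $j = 0, \ldots, N - 1$ telescopes the left side and bounds $|\tilde V(f)|$ by $|\tilde V(0)| + \sum_{j=0}^{N-1} \bigl( M_j + |\tilde V(\beta_j \uno)| \bigr)$, a constant depending only on $\lambda$.

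The main obstacle is the construction of the partition in the first step: the radii $\delta_\alpha$ need not be bounded below as $\alpha$ varies in $[0, \lambda]$, so a naive greedy stepping strategy might never reach $\lambda$ in finitely many steps. Compactness of the segment of constant functions together with the Lebesgue number lemma circumvents this issue, guaranteeing that a sufficiently fine partition exists and hence that the telescoping terminates with a finite bound.
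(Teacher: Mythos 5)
Your proof is correct, but it takes a genuinely different route from the paper. The paper argues by contradiction: assuming a sequence $(f_i)$ with $\|f_i\|_\infty\leq\lambda$ and $|\tilde V(f_i)|\to\infty$, it repeatedly bisects $[0,\lambda]$ and uses the identity $\tilde V(f_i\vee c\uno)+\tilde V(f_i\wedge c\uno)=\tilde V(f_i)+\tilde V(c\uno)$ to trap a diagonal subsequence between constants $a_j\uno\leq f_i^j\leq b_j\uno$ with $b_j-a_j=\lambda/2^j$, so that the subsequence converges uniformly to a constant function $d\uno$ while $|\tilde V|$ blows up, contradicting continuity at $d\uno$. You instead argue directly: continuity at the constants plus compactness of the segment $\{\alpha\uno:\alpha\in[0,\lambda]\}$ gives a finite partition on which $\tilde V$ is uniformly bounded on each slab $\{\beta_j\leq g\leq\beta_{j+1}\}$, and the telescoping identity $\tilde V(f_{j+1})-\tilde V(f_j)=\tilde V(f_{j+1}\vee\beta_j\uno)-\tilde V(\beta_j\uno)$ (which is correct, since $f_{j+1}\wedge\beta_j\uno=f_j$ and $f_{j+1}\vee\beta_j\uno$ lies in the $j$-th slab) propagates the bound from constants to all of $\{f:\|f\|_\infty\leq\lambda\}$. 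Both proofs rest on the same two ingredients — continuity along the ray of constant functions and the valuation identity tested against constants — but yours is constructive and yields an explicit bound $|\tilde V(0)|+\sum_{j=0}^{N-1}\bigl(M_j+|\tilde V(\beta_j\uno)|\bigr)$, whereas the paper's bisection only establishes existence of a bound. Your handling of the partition (Lebesgue number for the finite subcover, so that each slab of small mesh sits inside a ball where $\tilde V$ is bounded) correctly addresses the only delicate point, namely that the radii $\delta_\alpha$ need not be uniformly bounded below.
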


\begin{proof}
We reason by contradiction. If the result is not true, there exists $\lambda>0$ and  a sequence $(f_i)_{i\in \mathbb N}\subset C(S^{n-1})^+$, with $\|f_i\|_\infty\leq \lambda$ for every $i\in \mathbb N$ and such that $|\tilde{V}(f_i)|\rightarrow +\infty$.

Consider the function $$\theta:\mathbb R^+\longrightarrow \mathbb R$$ defined by $$\theta(c)=\tilde{V}(c \uno).$$ The continuity of $\tilde{V}$ implies that $\theta$ is continuous. Therefore, $\theta$ is uniformly continuous on $[0,\lambda]$. In particular, it is bounded on that interval. Therefore, there exists $M>0$ such that, for every $c\in [0,\lambda]$, $$|\tilde{V}(c \uno)|\leq M.$$

We define  inductively two sequences $(a_j)_{j\in \mathbb N}, (b_j)_{j\in \mathbb N}\subset \mathbb R^+$: Define first $a_0=0$, $b_0=\lambda$. Consider $c_0=\frac{a_0+b_0}{2}$.

We note that  $$\tilde{V}(f_i\vee c_0\uno) +\tilde{V}(f_i\wedge c_0\uno)=\tilde{V}(f_i) + \tilde{V}(c_0\uno).$$

Since $|\tilde{V}(c_0\uno)|\leq M$ and $|\tilde{V}(f_i)|\rightarrow +\infty$, we know that there must exist an infinite set $\mathbb M_1\subset \mathbb N$ such that for $i\in \mathbb M_1$ either $|\tilde{V}(f_i\vee c_0\uno)|\rightarrow +\infty$ or $|\tilde{V}(f_i\wedge c_0\uno)|\rightarrow +\infty$ as $i$ grows to $\infty$. In the first case, we set $a_1=c_0$, $b_1=\lambda$ and $f^1_i=f_i\vee c_0\uno$. In the second case, we set $a_1=0$ and $b_1=c_0$ and $f^1_i=f_i\wedge c_0\uno$. Now we define $c_1=\frac{a_1+ b_1}{2}$ and proceed similarly.

Inductively, we construct two sequences $(a_j), (b_j)\subset \mathbb R^+$, a decreasing sequence of infinite subsets $\mathbb M_j\subset \mathbb N$, and sequences $(f^j_i)_{i\in \mathbb M_j}\subset C(S^{n-1})^+$ such that, for every $j\in\mathbb N$,
$$
|a_j-b_j|=\frac{\lambda}{2^j},
$$
and for every $i\in\mathbb M_j$, for every $t\in S^{n-1}$,
$$
a_j\leq f^j_i(t)\leq b_j,
$$
and with the property that
$$
\lim_{ i\rightarrow \infty} |\tilde{V}(f^j_i)|=+\infty.
$$
Passing to a further subsequence we may assume without loss of generality that, for every $i\in \mathbb N$,  $$|\tilde{V}(f_i^i)|\geq i.$$

Call $d=\lim_i a_i$. If we consider now the sequence $(f_i^i)_{i\in \mathbb N}\subset C(S^{n-1})^+$, we have that $$\|f_i^i-d\uno\|_\infty \rightarrow 0$$ but $$|\tilde{V}(f_i^i)|\geq i,$$
in contradiction to the continuity of $\tilde{V}$ at $d\uno$.

\end{proof}

We thank the anonymous referee of \cite{Vi} for suggesting a procedure very similar to this as an alternative reasoning to show a statement in that paper.

\smallskip

In the rest of this note we will repeatedly use the fact that $S^{n-1}$ is a compact metric space. We will write $d$ to denote the euclidean metric in $S^{n-1}$.

We need to recall an additional concept for our next result:

\begin{defi}\label{def:rims}
Given a set $A\subset S^{n-1}$, and $\omega>0$,  the {\em outer parallel band} around $A$ is the set 
$$
A_\omega=\{t\in S^{n-1} : 0<d(t, A)<\omega\}.
$$
\end{defi}

Note that, for every $A\subset S^{n-1}$ and $\omega>0$, $A_\omega$ is an open set.

In our next result we use the fact that $V$ is bounded on bounded sets to  control  $V$ on these bands.

\begin{lema}\label{rims}
Let $V:\mathcal S^n_0\rightarrow \mathbb R$ be a radial continuous valuation. Let $A\subset S^{n-1}$ be any Borel set and $\lambda\in \mathbb R^+$.
$$
\lim_{\omega\rightarrow 0} \sup\{|\tilde{V}(f)|: \, f\prec A_\omega, \, \|f\|_\infty\leq \lambda\}=0.
$$
\end{lema}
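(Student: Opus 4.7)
The plan is to argue by contradiction, with Dini's theorem on monotone sequences of continuous functions as the decisive compactness tool. After replacing $V$ by $V-V(\{0\})$ (which preserves the valuation and continuity) we may assume $\tilde V(0)=0$. Define
\[
\alpha_\omega^{+}=\sup\{\tilde V(f):f\prec A_\omega,\ \|f\|_\infty\le\lambda\},\qquad \alpha_\omega^{-}=\sup\{-\tilde V(f):f\prec A_\omega,\ \|f\|_\infty\le\lambda\}.
\]
Since $\omega\mapsto A_\omega$ is increasing, both $\alpha_\omega^{\pm}$ are increasing in $\omega$ (and bounded above by Lemma \ref{l:bbs}), so the limits $\alpha_0^{\pm}:=\lim_{\omega\to 0^+}\alpha_\omega^{\pm}$ exist. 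The quantity in the statement equals $\max(\alpha_0^+,\alpha_0^-)$; after possibly replacing $V$ by $-V$ it therefore suffices to show $\alpha_0^{+}=0$.

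Assume for contradiction $\alpha_0^{+}>0$ and fix a small $\eta>0$. I would pick a decreasing sequence $\omega_k\downarrow 0$ with $\alpha_{\omega_k}^{+}-\alpha_0^{+}<2^{-k}\eta$, and for each $k$ a function $f_k\in C(S^{n-1})^+$ satisfying $f_k\prec A_{\omega_k}$, $\|f_k\|_\infty\le\lambda$, and $\tilde V(f_k)>\alpha_0^{+}-2^{-k}\eta$. Setting $g_k=f_1\wedge\dots\wedge f_k$ produces a decreasing sequence of positive continuous functions with $g_k\prec A_{\omega_k}$ and $\|g_k\|_\infty\le\lambda$.

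The core computation is to telescope the valuation identity
\[
\tilde V(g_{k+1})=\tilde V(g_k\wedge f_{k+1})=\tilde V(g_k)+\tilde V(f_{k+1})-\tilde V(g_k\vee f_{k+1}),
\]
using that $g_k\vee f_{k+1}\prec A_{\omega_k}$ forces $\tilde V(g_k\vee f_{k+1})\le\alpha_{\omega_k}^{+}<\alpha_0^{+}+2^{-k}\eta$. The per-step loss is then $O(2^{-k}\eta)$, and summing the resulting geometric series yields a uniform lower bound $\tilde V(g_k)\ge\alpha_0^{+}-C\eta$ with $C$ absolute (in fact $C=2$ suffices).

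To close the argument, observe that $g_k\to 0$ pointwise on $S^{n-1}$: if $t\in\overline{A}$ then $t\notin A_{\omega_k}$ and hence $f_k(t)=0$, while if $t\notin\overline{A}$ then eventually $\omega_k<d(t,A)$ and again $f_k(t)=0$. Since $(g_k)$ is a monotone sequence of continuous functions on the compact metric space $S^{n-1}$ with continuous pointwise limit $0$, Dini's theorem upgrades this to uniform convergence $g_k\to 0$. Continuity of $\tilde V$ then gives $\tilde V(g_k)\to\tilde V(0)=0$, contradicting the uniform lower bound whenever $\eta<\alpha_0^+/C$. The delicate point I expect to handle carefully is the bookkeeping of the telescoping errors so that $C$ is genuinely independent of $k$; the key conceptual move is recognising that monotonicity together with Dini's theorem substitutes for the uniform continuity of $\tilde V$ on bounded sets, which is not available at this stage of the paper.
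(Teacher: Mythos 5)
Your proof is correct, and it takes a genuinely different route from the paper's. The paper argues by contradiction and disjointifies: after extracting a subsequence with $\tilde V(f_i)>\epsilon$, it uses continuity of $\tilde V$ at each $f_i$ together with Urysohn's lemma to trim $f_i$ to a function supported in $A_{\omega_i}\setminus A_{\rho/2}$ at a cost of $\epsilon/2$, then chooses the next band inside the removed collar so that the trimmed functions have pairwise disjoint supports; additivity on disjointly supported functions then makes the valuation of the finite maxima grow without bound, contradicting Lemma \ref{l:bbs}. You instead exploit the monotonicity in $\omega$ of the suprema $\alpha_\omega^{\pm}$ (their finiteness again coming from Lemma \ref{l:bbs}), telescope the valuation identity along the decreasing minima $g_k=f_1\wedge\dots\wedge f_k$ --- the key observation being that $g_k\vee f_{k+1}\prec A_{\omega_k}$, so the loss at each step is controlled by $\alpha_{\omega_k}^{+}-\alpha_0^{+}$, which you have arranged to be summable --- and then invoke Dini's theorem to upgrade the pointwise convergence $g_k\to 0$ to uniform convergence, so that continuity of $\tilde V$ at the single function $0$ yields the contradiction. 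Your bookkeeping checks out (the total loss is indeed at most $2\eta$), as do the support computations $\supp(g_k)\subset\supp(f_k)\subset A_{\omega_k}$ and $\supp(g_k\vee f_{k+1})\subset A_{\omega_k}$. What your approach buys: it uses continuity of $\tilde V$ only at $0$, and Dini's theorem replaces both the Urysohn trimming and the disjoint-support construction as the compactness input; it is shorter and arguably cleaner. What the paper's approach buys: its splitting technique is of a piece with the partition arguments used repeatedly later in the paper, whereas your monotone-minima trick is specific to this lemma.

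One caveat. Your opening normalization is not actually a reduction: the conclusion $\sup|\tilde V(f)|\to 0$ is not invariant under adding a constant to $V$, and the lemma as stated is false for a nonzero constant valuation, since $f=0$ always satisfies $f\prec A_\omega$. So $\tilde V(0)=0$ is a genuine (implicit) hypothesis of the lemma rather than something one may assume without loss of generality. The paper's own proof carries the same hidden assumption (the identity $\tilde V(\bigvee_j\psi_jf_{i_j})=\sum_j\tilde V(\psi_jf_{i_j})$ for disjointly supported functions requires it), and the lemma is only ever applied to valuations with $V(\{0\})=0$, so this does not affect the substance of your argument --- but you should state it as a hypothesis rather than dispose of it by renormalizing.
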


\begin{proof}
We reason by contradiction. Suppose the result is not true. Then there exist $A\subset S^{n-1}$, $\lambda \in \mathbb R^+$, $\epsilon>0$, a sequence $(\omega_i)_{i\in \mathbb N}\subset \mathbb R$  and a sequence $(f_i)_{i\in \mathbb N}\subset C(S^{n-1})^+$ such that $\lim_{i\rightarrow \mathbb N} \omega_i=0$ and, for every $i\in \mathbb N$,

\begin{itemize}
\item   $\omega_i>0$
\item  $f_i\prec A_{\omega_i}$
\item $\|f_i\|_\infty\leq \lambda$
\item $|\tilde{V}(f_i)|\geq \epsilon.$

\end{itemize}

Therefore, there exists an infinite subset $I\subset \mathbb N$ such that either  $\tilde{V}(f_i) >\epsilon$ for every $i\in I$ or $\tilde{V}(f_i) <-\epsilon$ for every $i\in I$. So, we assume without loss of generality that $\tilde{V}(f_i) >\epsilon$ for every $i\in I$. The case $\tilde{V}(f_i) <-\epsilon$ is totally analogous.

Consider $f_1$. Using the continuity of $\tilde{V}$ at $f_1$, we get the existence of $\delta>0$ such that for every  $g\in C(S^{n-1})^+$ with $\|f_1-g\|_\infty<\delta$,
$$
|\tilde{V}(f_1)-\tilde{V}(g)|\leq \frac{\epsilon}{2}.
$$

Since $f_1$ is uniformly continuous and $f_1(t)=0$ for every $t\in A\subset S^{n-1}\backslash A_{\omega_1}$, there exists $0<\rho<\omega_1$ such that, for every $t\in S^{n-1}$ with $d(t, A)<\rho$, $f_1(t)<\delta$.
We consider the disjoint closed sets
$$
C_1=\{t\in S^{n-1} : d(t, A)\leq \frac{\rho}{2}\}
$$
and
$$
C_2=f_1^{-1}\left([\delta, \lambda]\right).
$$
By Urysohn's Lemma, we can consider a continuous function $\psi_1$ with $\psi_{1|_{C_1}}=0$, $\psi_{1|_{C_2}}=1$ and  $0\leq \psi_1(t)\leq 1$ for every $t\in S^{n-1}$. We consider now the function $\psi_1 f_1\in C(S^{n-1})^+$. On the one hand, $\|f_1-\psi_1 f_1\|_\infty\leq \delta$ and, therefore, $$|\tilde{V}(\psi_1 f_1)|\geq \left||\tilde{V}(f_1)|-|\tilde{V}(f_1)-\tilde{V}(\psi_1 f_1)|\right|> \epsilon-\frac{\epsilon}{2}=\frac{\epsilon}{2}.$$

On the other hand,  $\psi_1 f_1\prec A_{\omega_1}\backslash A_{\frac{\rho}{2}}$. Now, we can choose $\omega_{i_2}< \frac{\rho}{2}$ and we can reason similarly as above with the function $f_{i_2}$.

Inductively, we construct a sequence of functions $(\psi_j f_{i_j})_{j\in \mathbb N}\subset C(S^{n-1})^+$ with disjoint support such that $\tilde{V}(\psi_j f_{i_j})>\frac{\epsilon}{2}$. Noting that
$$
\tilde{V}\bigg(\bigvee_j \psi_j f_{i_j}\bigg)=\sum_j \tilde{V}(\psi_j f_{i_j}),
$$
and that
$$
\bigg\|\bigvee_j \psi_j f_{i_j}\bigg\|_\infty\leq\lambda,
$$
we get a contradiction with the fact that $V$ is bounded on bounded sets.
\end{proof}

\section{Proof of Theorem \ref{t:jordan}}\label{s:jordan}

In this section we prove Theorem \ref{t:jordan} and, as a simple application, we complete the main result of \cite{Vi}.

\begin{proof}[Proof of Theorem \ref{t:jordan}]
Let $V:\mathcal S_0^n\longrightarrow \mathbb R$ be as in the hypothesis and consider the associated $\tilde V:C(S^{n-1})^+\longrightarrow\mathbb R$. For every $f\in C(S^{n-1})^+$, we define
$$
\tilde{V}^+(f)=\sup\{\tilde{V}(g): \, 0\leq g \leq f\},
$$
and we consider the application $V^+:\mathcal S_0^n\longrightarrow \mathbb R$ defined by
$V^+(K)=\tilde{V}^+(\rho_K)$.

Assume for the moment that $V^+$ is a radial continuous valuation. In that case, the result follows easily:

First we note that it follows from $\tilde V(0)=0$ that $V^+(\{0\})=0$ and that, for every $f\in C(S^{n-1})^+$, one has   $\tilde V^+(f)\geq 0$. Therefore,  $V^+(K)\geq 0$ for every $K\in \mathcal S_0^n$.

We define next $V^-=V^+-V$. Clearly, $V^-$ is a radial continuous valuation and $V^-(\{0\})=0$. By the definition of $ V^+$, it follows that, for every $K\in \mathcal S_0^n$, one has $V(K)\leq  V^+(K)$. Thus, $V^-(K)\geq 0$. And clearly we have $$V=V^+-V^-.$$

\smallskip

Therefore, we will finish if we show that
$V^+$ is  a radial continuous valuation. Let us prove it.

First, we see that  it is a valuation. Let $f_1, f_2\in C(S^{n-1})^+$. We have to check that
\begin{equation}\label{igualdad}
\tilde{V}^+(f_1\vee f_2)+ \tilde{V}^+(f_1\wedge f_2) = \tilde{V}^+(f_1)+ \tilde{V}^+(f_2).
\end{equation}

Fix $\epsilon>0$. We choose $0\leq g_1\leq f_1$ such that $\tilde{V}^+(f_1)\leq \tilde{V}(g_1)+\epsilon$, and $0\leq g_2\leq f_2$ such that $\tilde{V}^+(f_2)\leq \tilde{V}(g_2)+\epsilon$.

Then,
\begin{align*}
\tilde{V}^+(f_1)+ \tilde{V}^+(f_2)&\leq \tilde{V}(g_1)+ \tilde{V}(g_2)+ 2\epsilon = \tilde{V}(g_1\vee g_2) + \tilde{V}(g_1\wedge g_2) + 2\epsilon\\
&\leq \tilde{V}^+(f_1\vee f_2) + \tilde{V}^+(f_1\wedge f_2) + 2\epsilon,
\end{align*}
where the last inequality follows from the fact that $0\leq g_1\vee g_2\leq f_1\vee f_2$ and $0\leq g_1\wedge  g_2\leq f_1\wedge f_2$. Since $\epsilon>0$ was arbitrary, this proves one of the inequalities in \eqref{igualdad}.

For the other one, fix again $\epsilon>0$. We choose $0\leq g \leq f_1\vee f_2 $ such that $\tilde{V}^+(f_1\vee f_2)\leq \tilde{V}(g)+\epsilon$, and $0\leq h \leq f_1\wedge f_2 $ such that $\tilde{V}^+(f_1\wedge f_2)\leq \tilde{V}(h)+\epsilon$. Let us consider the sets
$$
A=\{ t\in S^{n-1} : f_1(t)\geq f_2(t)\}
$$
and
$$
B=\{ t\in S^{n-1} : f_1(t)< f_2(t)\}.
$$

Let $\lambda=\|f_1\vee f_2\|_\infty$. According to Lemma \ref{rims}, there exists $\omega_1>0$ such that, for every $f\prec A_{\omega_1}$ with $\|f\|_\infty\leq\lambda$ we have $|\tilde{V}(f)|\leq \epsilon$.

Since $\tilde{V}$ is continuous at $g$, there exists $\delta>0$ such that, $|\tilde{V}(g)-\tilde{V}(g')|<\epsilon$ for every $g'$ such that $\|g-g'\|_\infty<  \delta$. We define $g'=(g-\frac{\delta}{2})\vee 0$. Then, for every $t\in A$, it follows that
$$
g'(t)=\max\big\{g(t)-\frac{\delta}{2},0\big\}\leq g(t)\leq (f_1\vee f_2)(t)=f_1(t).
$$
Now, we can apply the uniform continuity of $g$ and $f_1$  to find $\omega_2$ such that for every $t,s\in S^{n-1}$, if $|t-s|<\omega_2$, then $|f_1(t)-f_1(s)|<\delta/4$ and  $|g(t)-g(s)|<\delta/4$. In particular, this implies that for every $t\in A\cup A_{\omega_2}$, $g'(t)\leq f_1(t)$. On the other hand, it is clear that $g'(t)\leq f_2(t)$ for $t\in B$.

Let $\omega=\min\{\omega_1, \omega_2\}$, and let
$$
J(A, \omega)=A\cup A_\omega
$$
be the open $\omega$-outer parallel of the closed set $A$.
Note that $S^{n-1}=J(A, \omega)\cup B$, where both $J(A, \omega)$ and $B$ are open sets. Moreover, we clearly have $J(A, \omega)\cap B=A_\omega$.

We consider the functions  $\varphi_1\prec J(A,\omega)$, $\varphi_2\prec B$  associated to the decomposition $S^{n-1}=J(A, \omega)\cup B$ by Lemma \ref{split}. Then $\varphi_1\vee \varphi_2=\uno$. Let us define $g'_1=\varphi_1 g'$, $g'_2=\varphi_2 g'$, $h_1=\varphi_1 h$, $h_2=\varphi_2 h$ as in Lemma \ref{split}.

A simple verification yields \begin{itemize}
\item $g'=g'_1\vee g'_2$, $h=h_1\vee h_2$,

\item $g'_1\wedge g'_2\prec A_\omega$, $h_1\wedge h_2\prec A_\omega$,

\item $g'_1\wedge h_2\prec A_\omega$, $h_1\wedge g'_2\prec A_\omega$,

\item $0\leq g'_1\vee h_2 \leq f_1$,

\item $0\leq g'_2\vee h_1\leq  f_2$.

\end{itemize}

Therefore, we get
\begin{align*}
\tilde{V^+}(f_1\vee f_2)&+ \tilde{V}^+(f_1\wedge f_2) \leq  \tilde{V}(g)+ \tilde{V}(h) + 2\epsilon \leq \tilde{V}(g')+ \tilde{V}(h) + 3\epsilon \\
&=\tilde{V}(g'_1) + \tilde{V}(g'_2) - \tilde{V}(g'_1\wedge g'_2) + \tilde{V}(h_1) + \tilde{V}(h_2) - \tilde{V}(h_1\wedge h_2)+3\epsilon \\
&\leq\tilde{V}(g'_1) + \tilde{V}(h_2)+ \tilde{V}(g'_2) +\tilde{V}(h_1)+5\epsilon\\
&=\tilde{V}(g'_1\vee h_2)+ \tilde{V}(g'_1\wedge h_2)+ \tilde{V}(g'_2\vee h_1)+\tilde{V}(g'_2\wedge  h_1)+ 5\epsilon\\
&\leq  \tilde{V}(g'_1\vee h_2)+  \tilde{V}(g'_2\vee h_1) + 7\epsilon\leq \tilde{V^+}(f_1)+ \tilde{V}^+(f_2)+7\epsilon.
\end{align*}
Again, since $\epsilon>0$ was arbitrary, this finishes the proof of \eqref{igualdad}.

\smallskip

Let us see now that $\tilde{V}^+$ is continuous. Let us consider $f_0\in C(S^{n-1})^+$ and take $\epsilon>0$. There exists $g_0\in C(S^{n-1})^+$ with $0\leq g_0 \leq f_0$ such that $\tilde{V}^+(f_0)\leq \tilde{V}(g_0)+\epsilon$.

Since $\tilde{V}$ is continuous at $f_0$ and $g_0$,  there exists $\delta>0$ such that for every $f, g \in C(S^{n-1})^+$ with $\|f_0-f\|_\infty<\delta$ and $\|g_0-g\|<\delta$, we have  $|\tilde{V}(f_0)-\tilde{V}(f)|<\epsilon$ and  $|\tilde{V}(g_0)-\tilde{V}(g)|<\epsilon$.

Let now $f\in C(S^{n-1})^+$ be such that $\|f_0-f\|_\infty<\delta$. Pick $g\in C(S^{n-1})^+$ with $0\leq g \leq f$ such that $\tilde{V}^+(f)\leq \tilde{V}(g) + \epsilon$.

Note that  $\|g_0\wedge f - g_0\|<\delta$ and $\|g\vee f_0 - f_0\|<\delta$. Then, we have
$$
\tilde{V}^+(f)\geq \tilde{V}(g_0\wedge f) \geq \tilde{V}(g_0)-\epsilon\geq \tilde{V}^+(f_0)-2\epsilon,
$$
and
\begin{eqnarray*}
\tilde{V}^+(f)&\leq& \tilde{V}(g) + \epsilon =\tilde{V}(g\wedge f_0) + \tilde{V}(g\vee f_0) -  \tilde{V}(f_0) +\epsilon \\
&\leq& \tilde{V}(g\wedge f_0) + |\tilde{V}(g\vee f_0) -  \tilde{V}(f_0) |+ \epsilon \leq \tilde{V}^+(f_0) + 2\epsilon.
\end{eqnarray*}

Hence, $$|\tilde{V}^+(f_0)-\tilde{V}^+(f)|<2\epsilon$$ and $\tilde{V}^+$ is continuous as claimed.

The last statement follows immediately from the proof. 
\end{proof}

As an application of Theorem \ref{t:jordan}, we can complete the main result of \cite{Vi}. In that paper, {\em positive} rotationally invariant continuous valuations $V$ on the star bodies of $\mathbb R^n$, satisfying that $V(\{0\})=0$, are  characterized by an integral representation as in Corollary  \ref{representacion} below. The question of whether a similar description is valid for the case of real-valued (not necessarily positive or negative) continuous rotationally invariant valuations was left open.

Now, Theorem \ref{t:jordan} immediately  gives a positive answer to this question:

\begin{corolario}\label{representacion}
Let $V:\mathcal S_0^n\longrightarrow \mathbb R$ be a rotationally invariant radial continuous valuation on the $n$-dimensional star bodies $\mathcal S_0^n$. Then, there exists a continuous function $\theta:[0,\infty) \longrightarrow \mathbb R$ such that, for every $K\in \mathcal S_0^n$,
$$V(K)=\int_{S^{n-1}} \theta(\rho_K(t)) dm(t),$$
where $\rho_K$ is the radial function of $K$ and $m$ is the Lebesgue measure on $S^{n-1}$ normalized so that $m(S^{n-1})=1$.

Conversely, let $\theta:\mathbb R^+\longrightarrow \mathbb R$ be a continuous function. Then the application  $V:\mathcal S_0^n \longrightarrow \mathbb R$ given by
$$V(K)=\int_{S^{n-1}} \theta(\rho_K(t)) dm(t)$$ is a radial continuous rotationally invariant valuation.
\end{corolario}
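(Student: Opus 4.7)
The plan is to reduce the general case to the positive case already treated in \cite{Vi} by means of the Jordan-type decomposition provided by Theorem \ref{t:jordan}. A preliminary normalization allows us to assume $V(\{0\})=0$: if $c=V(\{0\})$, then $V_0=V-c$ is still a radial continuous rotationally invariant valuation (a constant trivially satisfies the valuation identity) with $V_0(\{0\})=0$, and once $V_0$ is represented by a continuous function $\theta_0$ the desired representation of $V$ is obtained by taking $\theta=\theta_0+c$ (using that $m(S^{n-1})=1$).

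To the normalized valuation $V_0$ I would apply Theorem \ref{t:jordan}, obtaining a decomposition $V_0=V^+-V^-$ into two positive radial continuous valuations vanishing at $\{0\}$. The crucial point is the last sentence of Theorem \ref{t:jordan}, which guarantees that $V^+$ and $V^-$ inherit rotational invariance from $V_0$. Each of them therefore falls within the scope of the integral representation from \cite{Vi}, yielding continuous functions $\theta^+,\theta^-:[0,\infty)\longrightarrow\mathbb R$ with
$$V^\pm(K)=\int_{S^{n-1}}\theta^\pm(\rho_K(t))\,dm(t).$$
Setting $\theta_0=\theta^+-\theta^-$, which is continuous as a difference of continuous functions, and then restoring the constant $c$ as above, completes the direct implication.

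For the converse, given a continuous $\theta:[0,\infty)\longrightarrow\mathbb R$, the pointwise identity $\theta(\max\{a,b\})+\theta(\min\{a,b\})=\theta(a)+\theta(b)$ combined with $\rho_{K\cup L}=\rho_K\vee\rho_L$ and $\rho_{K\cap L}=\rho_K\wedge\rho_L$ yields the valuation property upon integration; rotational invariance of $V$ follows from that of $m$ via the change of variables $t\mapsto gt$; and uniform continuity of $\theta$ on compact subintervals of $[0,\infty)$ translates radial convergence $\|\rho_{K_j}-\rho_K\|_\infty\to 0$ into uniform, hence $L^1(m)$, convergence of $\theta\circ\rho_{K_j}$ to $\theta\circ\rho_K$, giving continuity of $V$. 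There is no genuine new obstacle in this corollary: the whole argument amounts to assembling Theorem \ref{t:jordan}, and crucially its preservation of rotational invariance, with the positive case already established in \cite{Vi}.
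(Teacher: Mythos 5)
Your proof is correct and follows essentially the same route as the paper: normalize by subtracting $V(\{0\})$, apply Theorem \ref{t:jordan} (using its preservation of rotational invariance), represent $V^+$ and $V^-$ via the positive case of \cite{Vi}, and take $\theta=\theta^+-\theta^-+V(\{0\})$. The only cosmetic difference is that for the converse the paper simply cites \cite[Theorem 1.1]{Vi}, whereas you sketch the (correct) direct verification.
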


\begin{proof}
Let $V:\mathcal S_0^n\longrightarrow \mathbb R$ be a rotationally invariant radial continuous valuation.  Then, the application defined by $V'(L)=V(L)-V(\{0\})$ is easily seen to be a rotationally invariant radial continuous valuation such that $V'(\{0\})=0$. We   decompose it as $V'=V^+-V^-$ as in Theorem \ref{t:jordan}. 

According to \cite[Theorem 1.1]{Vi}, there exist two continuous functions  $\theta^+, \theta^-:[0,\infty) \longrightarrow \mathbb R$ such that, for every $K\in \mathcal S_0^n$,
$$V'(K)=V^+(K)-V^-(K) =\int_{S^{n-1}} \theta^+(\rho_K(t)) dm(t)- \int_{S^{n-1}} \theta^-(\rho_K(t)) dm(t).$$

We define now $\theta=\theta^+-\theta^-+ V(\{0\})$ and the first part of the result follows.

The converse statement had already been proved in \cite[Theorem 1.1]{Vi} (for that implication, the positivity is not needed).
\end{proof}

\smallskip

\begin{remark}
As in \cite{Vi}, the function $\theta$ in Corollary \ref{representacion} is nothing but $\theta(\lambda)=V(\lambda S^{n-1})$.
\end{remark}

\section[Construction of the measures]{Construction of the control measure and the representing measures}\label{controlmeasure}

As in \cite{Vi}, one of the difficulties  we face in the rest of the paper is the fact that $V$ is not defined on star sets, so that we cannot a priori assign a meaning to $V(\chi_A)$. In order to assign a meaning to it, we  proceed in two steps as in \cite{Vi}: for each $\lambda\geq 0$, first we need a {\em control measure} $\mu_\lambda$ that will allow us next to define the {\em representing measure} $\nu_\lambda$ meant to extend $V$ in the sense that $\nu_\lambda(A)$ is the natural assignation for the (not yet defined) $\overline{V}(\lambda \chi_A)$. Even when this measures $\nu_\lambda$ are defined, it is still not obvious how to extend $V$ to the Borel measurable functions. This will be done in the next section.

For each $\lambda\geq 0$ we  construct a control measure associated to a positive radial continuous valuation $V:\mathcal S_0^n\rightarrow\mathbb R^+$ exactly as it was done in \cite{Vi}, since rotational invariance did not play a role in that construction. We sketch the reasonings here and we refer the reader to \cite{Vi} for a more detailed description.

For every $\lambda\geq 0$ we define the outer measure $\mu_{\lambda}^*$ as follows:
For every open set $G\subset S^{n-1}$ we define $$\mu_\lambda^*(G)=\sup \{\tilde{V}(f): \, f\prec G, \, \|f\|_\infty\leq \lambda\}.$$

Next, for every $A\subset S^{n-1}$, we define $$\mu_\lambda^*(A)=\inf\{ \mu_\lambda^*(G):\,  A\subset G, \, G \mbox{ an open set }\}.$$

It is easy to see that both definitions coincide on open sets. It is not difficult to  see that $\mu_\lambda^*$ is an outer measure \cite[Proposition 3.5]{Vi} and that the Borel sets of $S^{n-1}$ are $\mu_\lambda^*$ measurable \cite[Proposition 3.6]{Vi}, so that $\mu_\lambda$, the restriction of $\mu^*_\lambda$ to the Borel $\sigma$-algebra of $S^{n-1}$, is a measure: for $\lambda\geq 0$ and any Borel set $A\subset S^{n-1}$
\begin{equation}\label{def:mu}
\mu_\lambda(A)=\inf\Big\{ \sup \{\tilde{V}(f): \, f\prec G, \, \|f\|_\infty\leq \lambda\}:\,  A\subset G \mbox{ open}\Big\}.
\end{equation}

In \cite{Vi}, the rotational invariance of $V$ was used to show that $\mu_\lambda$ was finite. Now we do not have rotational invariance, but Lemma \ref{l:bbs} yields that, for every $\lambda$,  $\mu_\lambda$ is finite.

We make explicit the control role of the $\mu_\lambda$'s in the following observation:

\begin{obs}\label{Vpequenha} Let $V$ be a positive radial continuous valuation and let $\mu_\lambda$ be the previously defined measure associated to it. For every $\lambda\geq 0$ and  $\epsilon>0$, if   $G\subset S^{n-1}$ is  an open set such that $\mu_\lambda (G)\leq \epsilon$, and $f\in C(S^{n-1})^+$ is such that $f\prec  G$ and $\|f\|_\infty\leq \lambda$, then
$$ \tilde{V}(f)\leq  \epsilon.$$
\end{obs}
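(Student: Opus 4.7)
The plan is to recognize that this observation is essentially a direct unpacking of the definition of $\mu_\lambda$ on open sets, combined with one auxiliary fact (that positivity of $V$ guarantees the supremum is non-negative so the bound really controls $\tilde V(f)$ from above).

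First, I would invoke the fact, already established in the construction of the outer measure $\mu_\lambda^*$, that the two definitions agree on open sets: for an open $G\subset S^{n-1}$,
$$\mu_\lambda(G)=\mu_\lambda^*(G)=\sup\{\tilde V(h):h\in C(S^{n-1})^+,\ h\prec G,\ \|h\|_\infty\leq \lambda\}.$$
Here I am using that the infimum-over-open-covers formulation reduces to the direct supremum when the set itself is already open, so there is nothing to check beyond citing the relevant step from the construction of $\mu_\lambda^*$.

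Next, given any $f\in C(S^{n-1})^+$ with $f\prec G$ and $\|f\|_\infty\leq \lambda$, the function $f$ belongs to the class of test functions over which the above supremum is taken. Hence
$$\tilde V(f)\leq \sup\{\tilde V(h):h\prec G,\ \|h\|_\infty\leq \lambda\}=\mu_\lambda(G)\leq \epsilon,$$
which is the desired conclusion. No delicate estimate is needed; the only point worth stressing is that because $V$ is assumed positive, the supremum defining $\mu_\lambda(G)$ on open sets is non-negative (the zero function $h=0$ witnesses $\tilde V(0)=0$), so the inequality $\mu_\lambda(G)\leq \epsilon$ has genuine content as an upper bound on $\tilde V(f)$.

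Since the proof reduces to matching hypotheses against a definition, there is no real obstacle. The only thing one must be careful about is not to confuse $\prec$ (support contained in) with pointwise majorization by $\chi_G$, and to keep track that the constraint $\|f\|_\infty\leq \lambda$ puts $f$ exactly inside the family over which $\mu_\lambda(G)$ is defined as a supremum.
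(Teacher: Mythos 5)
Your proof is correct and matches the paper's (implicit) justification: the Observation is stated without proof precisely because it is an immediate unpacking of the definition of $\mu_\lambda$ on open sets as the supremum of $\tilde V(f)$ over $f\prec G$ with $\|f\|_\infty\leq\lambda$. Your extra remark about positivity is harmless but not needed for this particular inequality.
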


\begin{obs}
For every $\lambda\geq0$, $\mu_\lambda$ is a finite Borel measure on the compact metric space $S^{n-1}$. Hence, by Ulam's Theorem, $\mu_\lambda$ is regular (cf. \cite[Theorem 7.1.4]{Dudley}). That is, for every Borel set $A\subset S^{n-1}$ we have
$$
\mu_{\lambda}(A)=\sup\{\mu_\lambda(K):\,  K\subset A, \, K \mbox{ compact}\}=\inf\{\mu_\lambda(G):\, A\subset G, \, G \mbox{ open}\}.
$$
\end{obs}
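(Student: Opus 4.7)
The observation bundles two assertions: finiteness of $\mu_\lambda$ and its regularity. My plan is to dispatch each separately and briefly, since both reduce to invoking material already developed in the paper.

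For finiteness, I would note that $S^{n-1}$ is itself open, so applying \eqref{def:mu} with $A=S^{n-1}$ and $G=S^{n-1}$ gives
\[
\mu_\lambda(S^{n-1})=\sup\bigl\{\tilde V(f) : f\in C(S^{n-1})^+,\ \|f\|_\infty\le \lambda\bigr\}.
\]
By Lemma \ref{l:bbs}, $V$ is bounded on bounded sets, so this supremum is finite; hence $\mu_\lambda$ is a finite measure.

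For outer regularity, there is essentially nothing to prove: the defining formula \eqref{def:mu} already reads $\mu_\lambda(A)=\inf\{\mu_\lambda(G): A\subset G,\ G \text{ open}\}$, because on open sets $G$ the two definitions of $\mu_\lambda^*(G)$ coincide.

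For inner regularity, I would observe that since $S^{n-1}$ is compact, the compact and closed subsets coincide, and inner regularity follows from outer regularity by complementation. Explicitly, given a Borel set $A$ and $\varepsilon>0$, applying outer regularity to $S^{n-1}\setminus A$ yields an open $G\supset S^{n-1}\setminus A$ with $\mu_\lambda(G)\le \mu_\lambda(S^{n-1}\setminus A)+\varepsilon$; the set $K=S^{n-1}\setminus G$ is then closed (hence compact), $K\subset A$, and
\[
\mu_\lambda(K)=\mu_\lambda(S^{n-1})-\mu_\lambda(G)\ge \mu_\lambda(A)-\varepsilon,
\]
where finiteness of $\mu_\lambda$ (proved in the first step) is what allows the subtraction. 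Alternatively, as indicated in the statement, one may simply cite Ulam's theorem (e.g.\ \cite[Theorem 7.1.4]{Dudley}) applied to the finite Borel measure $\mu_\lambda$ on the Polish space $S^{n-1}$. No real obstacle arises; the only subtlety worth flagging is that one must first have finiteness of $\mu_\lambda$ in hand before the complementation trick works, which is why the three steps must be carried out in this order.
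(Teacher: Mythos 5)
Your proposal is correct and follows essentially the same route as the paper: finiteness of $\mu_\lambda$ comes from Lemma \ref{l:bbs} applied to $G=S^{n-1}$ (exactly as the text notes just before the observation), and regularity is then either quoted from Ulam's theorem or, as you spell out, obtained by the standard outer-regularity-plus-complementation argument, which is valid here since closed subsets of $S^{n-1}$ are compact. The only difference is that you make explicit the elementary details the paper leaves to the citation; nothing is missing.
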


As in \cite{Vi}, we will now define, for each $\lambda\geq 0$, a measure $\nu_\lambda$ which we will use to represent $\tilde{V}$. Again, we only sketch the construction here and we refer the reader to \cite{Vi} for further details.

We recall that a {\em content} in $S^{n-1}$ is a non negative, finite, monotone set function defined on the family of all closed subsets of $S^{n-1}$, which is finitely subadditive and finitely additive on disjoint sets \cite[\S 53]{Halmos}. For each $\lambda\geq 0$ we define a content in the following way:

\begin{defi}\label{defcontent}For every closed  set $K\subset S^{n-1}$, we define
$$\zeta_\lambda(K)=\inf\{\tilde{V}(f):  K\prec \frac{f}{\lambda},\,\|f\|_\infty\leq\lambda\}.$$
\end{defi}

Thus defined,  $\zeta_\lambda$  can be well approximated from above by decreasing open sets:

\begin{lema}\label{lambdaG}
Let $K\subset G\subset S^{n-1}$ be such that $K$ is closed and $G$ is open.  Then
$$
\zeta_\lambda(K)=\inf\{\tilde{V}(f): K\prec \frac{f}{\lambda} \prec G,\,\|f\|_\infty\leq\lambda\}.
$$
\end{lema}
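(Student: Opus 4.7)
The plan is as follows. The inequality $\zeta_\lambda(K)\le\inf\{\tilde V(f):K\prec f/\lambda\prec G,\ \|f\|_\infty\le\lambda\}$ is immediate, since the right-hand side is an infimum over a strictly smaller family of admissible test functions. For the reverse inequality, I would like, given a near-optimal $f$ (that is, one with $K\prec f/\lambda$, $\|f\|_\infty\le\lambda$ and $\tilde V(f)$ close to $\zeta_\lambda(K)$), to modify it into a test function $f_1$ that is additionally supported inside $G$, without increasing $\tilde V$ by more than a pre-assigned error.

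Fix $\epsilon>0$ and choose such an $f$ with $\tilde V(f)\le\zeta_\lambda(K)+\epsilon$. Write $B=S^{n-1}\setminus G$, which is closed. Applying Lemma \ref{rims} to $B$, I pick $\omega>0$ small enough that $\omega<d(K,B)$ and $|\tilde V(h)|<\epsilon$ whenever $h\prec B_\omega$ and $\|h\|_\infty\le\lambda$. Consider the open cover of $S^{n-1}$ given by $U_1=\{t\in S^{n-1}:d(t,B)>\omega/2\}$ and $U_2=\{t\in S^{n-1}:d(t,B)<\omega\}$; observe that $K\subset U_1\subset G$ and $U_1\cap U_2\subset B_\omega$. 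Lemma \ref{split} then furnishes continuous $\varphi_1,\varphi_2:S^{n-1}\to[0,1]$ with $\varphi_i\prec U_i$ and $\varphi_1\vee\varphi_2=\uno$.

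Setting $f_i=\varphi_i f$, one has $f=f_1\vee f_2$ (using $f\ge0$), and $f_1$ is the sought test function: $\supp(f_1)\subset U_1\subset G$, and since $K\cap U_2=\emptyset$ forces $\varphi_2|_K=0$ and hence $\varphi_1|_K=1$, we have $f_1=f=\lambda$ on $K$, so $K\prec f_1/\lambda$. The valuation identity applied to $f_1,f_2$ yields
$$
\tilde V(f_1)=\tilde V(f)+\tilde V(f_1\wedge f_2)-\tilde V(f_2).
$$
Now $f_1\wedge f_2=(\varphi_1\wedge\varphi_2)f$ is supported in $U_1\cap U_2\subset B_\omega$ with sup-norm at most $\lambda$, so the choice of $\omega$ gives $\tilde V(f_1\wedge f_2)<\epsilon$; and positivity of $V$ gives $\tilde V(f_2)\ge0$. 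Therefore $\tilde V(f_1)\le\tilde V(f)+\epsilon\le\zeta_\lambda(K)+2\epsilon$, and letting $\epsilon\to 0$ concludes the proof.

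The main obstacle is that valuations, even positive ones, are not in general monotone, so the naive idea of cutting $f$ off outside $G$ by a Urysohn function need not decrease $\tilde V(f)$, and one cannot directly compare $\tilde V(\varphi f)$ with $\tilde V(f)$. The lattice decomposition provided by Lemma \ref{split}, combined with the band-control of Lemma \ref{rims} and the positivity hypothesis on $V$, is precisely the substitute for monotonicity that makes the argument work.
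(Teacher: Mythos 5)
Your argument is correct and follows essentially the same route as the paper: take a near-optimal test function $f$, use Lemma \ref{split} to write $f=f_1\vee f_2$ with $f_1\prec G$ still equal to $\lambda$ on $K$, apply the valuation identity, show the overlap term $\tilde V(f_1\wedge f_2)$ is small, and discard $\tilde V(f_2)$ by positivity. The only (harmless) difference is in how the overlap is controlled: you invoke Lemma \ref{rims} on the outer parallel band around $S^{n-1}\setminus G$, whereas the paper uses the regularity of the control measure $\mu_\lambda$ to pick a small open neighbourhood of $\supp(f)\setminus G$ and then Observation \ref{Vpequenha}.
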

\begin{proof}
One of the inequalities is trivial. We only need to check that $\zeta_\lambda(K)\geq \inf\{\tilde{V}(f): K\prec \frac{f}{\lambda} \prec G,\,\|f\|_\infty\leq\lambda\}$. To see this, we choose $\epsilon>0$. We pick now $f\in C(S^{n-1})^+$ with $K\prec \frac{f}{\lambda}$ and $\|f\|_\infty\leq\lambda$ such that $\zeta_\lambda(K)\geq \tilde{V}(f)-\epsilon$. The set  $C=\supp(f)\setminus G$ is closed (it could be empty, in that case the next reasonings are trivial). Therefore $C$ is compact, and $K\cap C=\emptyset$.

Since $\mu_\lambda$ is regular, there exists an open set $H\supset C$, with $H\cap K=\emptyset$, such that $\mu_\lambda(H\setminus C)\leq \epsilon$.  Therefore, $\mu_\lambda(G\cap H)\leq \mu_\lambda(H\setminus C)\leq \epsilon$. We apply now Lemma \ref{split} to the open sets $G,H$ and we obtain the functions $\varphi_G, \varphi_H$. We define $f_G=f\varphi_G$ and $f_H=f\varphi_H$. We have that $f=f_G\vee f_H$ and  $\supp(f_G\wedge f_H)\subset G\cap H$. Therefore, Observation \ref{Vpequenha} tells us that  $\tilde{V}(f_G\wedge f_H)\leq \epsilon$. So, we have

\begin{align*}
\zeta_\lambda(K)&\geq \tilde{V}(f)-\epsilon =\tilde{V}(f_G\vee f_H)-\epsilon\geq \tilde{V}(f_G\vee f_H) -\epsilon+ \tilde{V}(f_G\wedge f_H)-\epsilon\\
&=\tilde{V}(f_G)+\tilde{V}(f_H) -2\epsilon \geq \tilde{V}(f_G)-2\epsilon \geq \zeta_\lambda(K)-2\epsilon,
\end{align*}
due to the positivity of $\tilde{V}$.
Since $K\prec \frac{f_G}{\lambda} \prec G$ and $\|f_G\|_\infty\leq \lambda$, our result follows.
\end{proof}

Now, the  fact that $\zeta_\lambda$ is a content, and indeed a regular content,  can be seen exactly as in \cite[Lemmas 4.2 and 4.3]{Vi}. Therefore, we can define a regular measure $\nu_\lambda$ associated to $\zeta_\lambda$ in a standard way (see \cite[\S 53]{Halmos}) by setting, for each Borel set $A\subset S^{n-1}$,
\begin{equation}\label{def:nu}
\nu_\lambda(A)=\inf\{\sup\{\zeta_\lambda(K): K\subset G\}:\,G\textrm{ open},\,A\subset G\}.
\end{equation}
It is easy to see  that, for every closed set $K\subset S^{n-1}$, $\zeta_\lambda(K)=\nu_\lambda(K)$.

These measures $\nu_\lambda$ here defined immediately provide the extension of the valuation to simple Borel star sets. The next two lemmas will allow us to have good control of these measures $\nu_\lambda$.

\begin{lema}\label{l:controlenrims}
Let $C\subset S^{n-1}$ be a closed set, $\lambda\geq 0$, $\epsilon>0$ and let $G\subset S^{n-1}$ be an open set such that $C\subset G$ and such that $\mu_\lambda(G\setminus C)<\epsilon$. Then, for every pair of positive continuous functions $f_1,f_2$ such that, for $j=1,2$, $C\prec \frac{f_j}{\lambda} \prec G$ with $\|f_j\|_\infty\leq\lambda$, we have
$$
|\tilde{V}( f_1)-\tilde{V}( f_2)|\leq 6\epsilon.
$$

\end{lema}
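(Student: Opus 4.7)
The plan is to construct a continuous auxiliary function $g$ that interpolates between $f_2$ near $C$ and $f_1$ away from $C$, in such a way that $\tilde V(g)$ is close to $\tilde V(f_1)$ via pointwise continuity of $\tilde V$, and close to $\tilde V(f_2)$ via a suitable decomposition and Observation \ref{Vpequenha}. The main obstacle will be to avoid circularity: since $\tilde V$ is not known to be uniformly continuous on bounded sets, the order in which the continuity modulus and the auxiliary parameters ($\omega$, the cutoff $\psi$, the splitting $\varphi_i$) are chosen is critical.

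I would first extract, from the pointwise continuity of $\tilde V$ at the \emph{fixed} function $f_1$, a constant $\delta_1>0$ such that $\|h-f_1\|_\infty<\delta_1$ implies $|\tilde V(h)-\tilde V(f_1)|\leq\epsilon$. Since $f_1,f_2$ are uniformly continuous on $S^{n-1}$ and both coincide with $\lambda$ on $C$, I can then choose $\omega>0$ small enough that (a) $\{x:d(x,C)<\omega\}\subset G$ and (b) $|f_j(x)-\lambda|<\delta_1/2$ for $j=1,2$ whenever $d(x,C)<\omega$. By Urysohn's lemma, pick a continuous $\psi:S^{n-1}\to[0,1]$ with $\psi=1$ on $\{d(x,C)\leq\omega/3\}$ and $\psi=0$ outside $\{d(x,C)<2\omega/3\}$, and set $g=\psi f_2+(1-\psi)f_1$. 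A direct calculation gives $g-f_1=\psi(f_2-f_1)$, and on $\supp(\psi)\subset\{d(x,C)<\omega\}$ one has $|f_1-f_2|\leq|f_1-\lambda|+|\lambda-f_2|<\delta_1$, so $\|g-f_1\|_\infty<\delta_1$, and therefore $|\tilde V(g)-\tilde V(f_1)|\leq\epsilon$.

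To compare $\tilde V(g)$ with $\tilde V(f_2)$, form the open cover $\{V_1,V_2\}$ of $G$, with $V_1=\{d(x,C)<\omega/3\}$ and $V_2=G\cap\{d(x,C)>\omega/6\}$; both $V_2$ and $V_1\cap V_2=\{\omega/6<d(x,C)<\omega/3\}$ are open subsets of $G\setminus C$, hence have $\mu_\lambda$-measure strictly less than $\epsilon$. Applying Lemma \ref{split} gives $\varphi_1\prec V_1$ and $\varphi_2\prec V_2$ with $\varphi_1\vee\varphi_2=\uno$ on $G$. For any $h\in C(S^{n-1})^+$ supported in $G$ with $\|h\|_\infty\leq\lambda$, the valuation identity
\[
\tilde V(h)=\tilde V(\varphi_1 h)+\tilde V(\varphi_2 h)-\tilde V(\varphi_1 h\wedge\varphi_2 h),
\]
combined with Observation \ref{Vpequenha} applied to $\varphi_2 h\prec V_2$ and $\varphi_1 h\wedge\varphi_2 h\prec V_1\cap V_2$, gives $|\tilde V(h)-\tilde V(\varphi_1 h)|\leq\epsilon$. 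The decisive point is that on $V_1\subset\{d(x,C)\leq\omega/3\}$ the cutoff $\psi$ equals $1$, so $g=f_2$ there, and consequently $g\vee f_2=g\wedge f_2=f_2$ on $V_1$; since $\supp(\varphi_1)\subset V_1$, this forces $\varphi_1(g\vee f_2)=\varphi_1 f_2=\varphi_1(g\wedge f_2)$ identically on $S^{n-1}$. Applying the above estimate to $h\in\{g\vee f_2,\,g\wedge f_2,\,f_2\}$ and combining by triangle inequalities yields $|\tilde V(g\vee f_2)-\tilde V(f_2)|\leq 2\epsilon$ and $|\tilde V(g\wedge f_2)-\tilde V(f_2)|\leq 2\epsilon$.

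Finally, the valuation identity $\tilde V(g)+\tilde V(f_2)=\tilde V(g\vee f_2)+\tilde V(g\wedge f_2)$ gives $|\tilde V(g)-\tilde V(f_2)|\leq 4\epsilon$, and one last triangle inequality with the bound $|\tilde V(g)-\tilde V(f_1)|\leq\epsilon$ produces $|\tilde V(f_1)-\tilde V(f_2)|\leq 5\epsilon\leq 6\epsilon$. The hardest part of the argument is resisting the natural temptation to invoke continuity of $\tilde V$ at some auxiliary function built from $\omega$, $\psi$, or $\varphi_1$: those functions all depend on the shrinking parameter $\omega$, and doing so creates a circular dependence. Anchoring the continuity modulus to the fixed function $f_1$ and then selecting $\omega$ afterwards via uniform continuity of $f_1,f_2$ at $C$ is the structural trick that makes the estimate go through.
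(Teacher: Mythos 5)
Your argument is correct, and it lands at $5\epsilon\leq 6\epsilon$. It shares the paper's overall strategy -- fix the continuity modulus of $\tilde V$ at the given function(s) \emph{before} choosing the neighbourhood of $C$, use uniform continuity of $f_1,f_2$ together with $f_j|_C=\lambda$ to localize, split with Lemma \ref{split} into an inner region and an outer region sitting inside $G\setminus C$, and kill the outer contributions with Observation \ref{Vpequenha} -- but the gluing mechanism is genuinely different. The paper replaces both functions by $\tilde f_j=\lambda\uno\wedge\bigl(f_j/(1-\tfrac{\delta}{\lambda})\bigr)$, forcing them to coincide (with value $\lambda$) near $C$, and then compares $\tilde f_1$ and $\tilde f_2$ piece by piece; you instead build the single hybrid $g=\psi f_2+(1-\psi)f_1$ and compare $g$ with $f_2$ through the lattice identity applied to the pair $(g,f_2)$, exploiting $\varphi_1(g\vee f_2)=\varphi_1(g\wedge f_2)=\varphi_1 f_2$. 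Your route buys two small things: continuity of $\tilde V$ is needed only at $f_1$ (the paper uses it at both $f_j$), and there is no division by $\lambda$, so no special care is needed when $\lambda$ is small (the paper inserts $\delta=\min\{\delta_1,\lambda/2\}$ precisely for that). One step you should make explicit: the bound $|\tilde V(h)-\tilde V(\varphi_1 h)|\leq\epsilon$ (rather than $2\epsilon$) requires the positivity of $\tilde V$, since it rests on $\tilde V(\varphi_2 h)$ and $\tilde V(\varphi_1 h\wedge\varphi_2 h)$ both lying in $[0,\epsilon]$; this is legitimate because the lemma is stated for the positive valuations of Section \ref{controlmeasure}, but without it your constants would degrade to $9\epsilon$.
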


\begin{proof}
Since $\tilde{V}$ is continuous at $ f_1$ and $ f_2$,  there exists $\delta_1>0$ such that, for every $f\in C(S^{n-1})^+$, if $\|f- f_j\|\leq \delta_1$, then $|\tilde{V}(f)-\tilde{V}(  f_j)|<\epsilon$, for $j=1,2$.  We define $\delta=\min\{\delta_1, \frac{\lambda}{2}\}$ (this is just needed to make sure that $\lambda-\delta$ below is strictly greater than 0). Now,  using the fact that  both $ f_1$ and $f_2$ are uniformly continuous, we get the existence of $\rho$ such that, for every $t,s\in S^{n-1}$, $|t-s|<\rho$ implies that $|f_j(t)-  f_j(s)|<\delta$,  $j=1,2$.

Let $$J(C,\rho)=\{t\in S^{n-1}:d(t,C)<\rho\}.$$
The paragraph above implies that, for $j=1,2$, for every $t\in J(C,\rho)$, $ f_j(t)>\lambda-\delta$. For $j=1,2$ we define the functions
$$
\tilde{f}_j=\lambda \uno\wedge \left(\frac{f_j}{1-\frac{\delta}{\lambda} }\right).
$$

We clearly have that $\tilde{f}_j\in C(S^{n-1})^+$, $\tilde{f}_j\prec G$ and, for every  $t\in J(C,\rho)$, $\tilde{f}_j(t)=\lambda$. Also, we have that
$$
\|\tilde{f}_j-f_j\|_\infty \leq  \delta.
$$
(For this last inequality, note that if $f_j(t)\geq \lambda(1-\frac{\delta}{\lambda})$, then $\tilde{f_j}(t)-f_j(t)=\lambda-f_j(t)\leq \delta$. Otherwise, if $f_j(t)<\lambda(1-\frac{\delta}{\lambda})$, we have $\tilde{f}_j(t)-f_j(t)=f_j(t)\left(\frac{1}{1-\frac{\delta}{\lambda}} -1\right)=f_j(t)\frac{\delta}{\lambda-\delta} <\delta$.)

We consider now the open sets $G_1=G\cap \{t\in S^{n-1} : d(t, C)<\frac{2\rho}{3}\}$ and $G_2=G\cap \{t\in S^{n-1} : \frac{\rho}{3}<d(t, C)\}$. We consider two functions $\varphi_i\prec G_i$, $i=1,2$ as in Lemma \ref{split} and for $i=1,2$, $j=1,2$ we define the function $\tilde{f}_j^i=\varphi_i \tilde{f}_j$.

Then, for $j=1,2$, $\tilde{f}_j=\tilde{f}_j^1\vee \tilde{f}_j^2$. Also, for every $t\in G_1$, $\tilde{f}_1(t)=\tilde{f}_2(t)$. Therefore, $\tilde{f}_1^1=\tilde{f}_2^1$. Moreover, for $j=1,2$, $\tilde{f}_j^2\prec G_2\subset G\setminus C$ and, therefore, also $\tilde{f}_j^1\wedge\tilde{f}_j^2\prec G\setminus C$. Hence, by Observation \ref{Vpequenha}, we have that, for $j=1,2$, $\tilde{V}(\tilde{f}_j^2)\leq \epsilon$ and  $\tilde{V}(\tilde{f}_j^1\wedge \tilde{f}_j^2)\leq \epsilon$.

Recalling that,  for $j=1,2$, $$\tilde{V}(\tilde{f}_j)=\tilde{V}(\tilde{f}_j^1)+ \tilde{V}(\tilde{f}_j^2) - \tilde{V}(\tilde{f}_j^1\wedge \tilde{f}_j^2)),$$
we get
\begin{align*}
\left|\tilde{V}(f_1)-\tilde{V}(f_2)\right |&\leq \left|\tilde{V}(f_1)-\tilde{V}(\tilde{f}_1)\right |+\left|\tilde{V}(\tilde{f}_1)-\tilde{V}(\tilde{f}_2)\right |+ \left|\tilde{V}(f_2)-\tilde{V}(\tilde{f}_2)\right |\\
&\leq \left|\tilde{V}(\tilde{f}_1)-\tilde{V}(\tilde{f}_2)\right |+2\epsilon\\
&= \left| \tilde{V}(\tilde{f}_1^2) - \tilde{V}(\tilde{f}_1^1\wedge \tilde{f}_1^2)- \tilde{V}(\tilde{f}_2^2) +\tilde{V}(\tilde{f}_2^1\wedge \tilde{f}_2^2))\right|+2\epsilon\leq 6\epsilon.
\end{align*}
\end{proof}

As an immediate corollary, we have:
\begin{lema}\label{l:aproxnu}
Let $C\subset S^{n-1}$ be a closed set, $\lambda\geq 0$, $\epsilon>0$ and let $G\subset S^{n-1}$ be an open set such that $C\subset G$ and such that $\mu_\lambda(G\setminus C)<\epsilon$. Then,  for every $f\in C(S^{n-1})^+$ such that $\|f\|_\infty\leq \lambda$ and $C\prec \frac{f}{\lambda} \prec G$, $$\nu_\lambda(C)\leq \tilde{V}(f)\leq \nu_\lambda(C)+7\epsilon.$$

Therefore, if for every $\omega>0$ we choose $f_\omega\in C(S^{n-1})^+$ such that $\|f_\omega\|_\infty\leq \lambda$ and $C\prec \frac{f_\omega}{\lambda} \prec C\cup C_\omega$, $$\lim_{\omega\rightarrow 0} \tilde{V}(f_\omega)=\nu_\lambda(C).$$
\end{lema}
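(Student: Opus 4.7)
My plan is to read both assertions in Lemma \ref{l:aproxnu} as consequences of Lemma \ref{lambdaG} and Lemma \ref{l:controlenrims}, combined with a standard continuity-from-above argument for $\mu_\lambda$. There is no real obstacle here: the proof is essentially an assembly of the two preceding lemmas, and the only step that needs a moment's care is to verify that each $C\cup C_\omega$ is open and that $\mu_\lambda(C_\omega)\to 0$ as $\omega\to 0$.

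For the two-sided bound, I would first observe that $\nu_\lambda(C)\leq \tilde V(f)$ is forced by Definition \ref{defcontent}, since $f$ is admissible in the infimum defining $\zeta_\lambda(C)=\nu_\lambda(C)$. For the upper bound, I would apply Lemma \ref{lambdaG} with the same open set $G$ to select a test function $f_0\in C(S^{n-1})^+$ with $C\prec f_0/\lambda\prec G$, $\|f_0\|_\infty\leq\lambda$, and $\tilde V(f_0)\leq \nu_\lambda(C)+\epsilon$. Both $f$ and $f_0$ then satisfy the hypotheses of Lemma \ref{l:controlenrims} for the pair $(C,G)$ and the same $\epsilon$, so $|\tilde V(f)-\tilde V(f_0)|\leq 6\epsilon$, and therefore $\tilde V(f)\leq \tilde V(f_0)+6\epsilon\leq \nu_\lambda(C)+7\epsilon$.

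For the limit, I would first note that $C\cup C_\omega=\{t\in S^{n-1}:d(t,C)<\omega\}$, because every $t\in C$ has $d(t,C)=0<\omega$; this set is open by continuity of $d(\cdot,C)$. The family $\{C_\omega\}_{\omega>0}$ is decreasing and $\bigcap_{\omega>0}C_\omega=\emptyset$: any $t\notin C$ has $d(t,C)>0$ (as $C$ is closed), so $t\notin C_\omega$ for $\omega$ small, while any $t\in C$ is excluded by the strict positivity required in Definition \ref{def:rims}. Since $\mu_\lambda$ is finite, continuity from above gives $\mu_\lambda(C_\omega)\to 0$. For a given $\epsilon>0$ I would pick $\omega_0$ with $\mu_\lambda(C_{\omega_0})<\epsilon$ and, for every $\omega<\omega_0$, apply the first part of the lemma to $f_\omega$ with $G=C\cup C_\omega$, obtaining $\nu_\lambda(C)\leq \tilde V(f_\omega)\leq \nu_\lambda(C)+7\epsilon$. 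Since $\epsilon>0$ was arbitrary, this yields $\tilde V(f_\omega)\to \nu_\lambda(C)$ as $\omega\to 0$.
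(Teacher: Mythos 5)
Your proof is correct and follows essentially the same route as the paper: the two-sided bound is obtained exactly as in the paper's proof, by producing a near-optimal test function via Lemma \ref{lambdaG} together with $\nu_\lambda(C)=\zeta_\lambda(C)$ and then comparing it to $f$ with Lemma \ref{l:controlenrims}. The only (harmless) deviation is in the limit statement, where you justify $\mu_\lambda(C_\omega)\to 0$ by continuity from above of the finite measure $\mu_\lambda$ along the decreasing bands $C_\omega$ with empty intersection, whereas the paper simply quotes Lemma \ref{rims}; both arguments are valid.
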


\begin{proof}
Using Lemma \ref{lambdaG} and the fact that $\nu_\lambda(C)=\zeta_\lambda(C)$, we can choose $g\in C(S^{n-1})^+$ such that $\|g\|_\infty\leq \lambda$ and $C\prec \frac{g}{\lambda} \prec G$ and $\tilde{V}(g)\leq \nu_\lambda(C)+\epsilon$. Lemma \ref{l:controlenrims} proves now the first part of the statement.

For the second part, it is enough to note that Lemma \ref{rims} implies that  $\mu_\lambda((C\cup C_\omega)\setminus C)=\mu_\lambda(C_\omega)$ tends to 0 as $\omega$ tends to 0.
\end{proof}

\section{Proof of the main result}\label{extension}

In this section we prove Theorems \ref{t:extensiontoborel} and \ref{t:integral}. The main technical difficulty is to prove that $V$, and its extension to the simple functions defined through the measures $\nu_\lambda$, not only are continuous, but they preserve Cauchy sequences.

To do this, we first  show how a positive radial continuous valuation $V:\mathcal S_0^n\longrightarrow \mathbb R^+$ on the star bodies of $\mathbb R^n$ can be extended to a positive radial continuous valuation $\overline{V}:\mathcal S_b^n\longrightarrow \mathbb R^+$ on the {\em bounded Borel star sets} of $\mathbb R^n$. Once this is done, the positivity assumption can be removed using  Theorem \ref{t:jordan}.

As we mentioned in the introduction, the star bodies of $\mathbb R^n$ can be identified, by means of their radial functions, with the cone $C(S^{n-1})^+$ of the positive continuous functions defined on $S^{n-1}$. Similarly,  the star sets of $\mathbb R^n$ can be identified with $B(\Sigma_n)^+$, the positive bounded Borel functions (here, $\Sigma_n$ denotes the $\sigma$-algebra of the Borel subsets of $S^{n-1}$). Let $S(\Sigma_n)^+$ denote the space of positive Borel simple functions. That is, functions of the form  $\sum_{i=1}^n a_i\chi_{A_i}$ with $A_i\in\Sigma_n$ and $a_i\geq 0$ for $i=1,\ldots,n$. Recall that every bounded Borel function is the uniform limit of Borel simple functions.

For simplicity, in the rest of the paper we slightly abuse the notation and, whenever $X$ is one of the spaces $C(S^{n-1})^+$, $B(\Sigma_n)^+$ or $S(\Sigma_n)^+$, we say that an application $V:X\longrightarrow \mathbb R$ is a valuation if, for every $f,g\in X$, $$V(f \vee g)+V(f\wedge g)=V(f)+ V(g).$$

With this notation, the result we need to prove can be stated as: 

\begin{teo}\label{t:extension}
Let $\tilde V:C(S^{n-1})^+\rightarrow \mathbb R^+$ be a  continuous valuation with $\tilde V(0)=0$. Then $\tilde V$ admits a unique continuous extension $\overline V:B(\Sigma_n)^+\rightarrow \mathbb R^+$ which is also a valuation. 
\end{teo}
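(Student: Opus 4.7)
The plan is to define $\overline V$ first on the Borel simple functions $S(\Sigma_n)^+$ using the representing measures $\nu_\lambda$ from Section \ref{controlmeasure}, and then to extend by continuity to all of $B(\Sigma_n)^+$, exploiting the fact that $S(\Sigma_n)^+$ is $\|\cdot\|_\infty$-dense in $B(\Sigma_n)^+$.

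For a simple function $s=\sum_{i=1}^k c_i \chi_{A_i}$ with pairwise disjoint Borel sets $A_i\subset S^{n-1}$ and $c_i>0$, I would set
$$\overline V(s)=\sum_{i=1}^k \nu_{c_i}(A_i).$$
Countable additivity of each $\nu_{c_i}$ ensures that this is independent of the particular disjoint representation of $s$. The valuation identity for $\overline V$ on $S(\Sigma_n)^+$ then follows from taking a common Borel refinement on which both simple functions are constant and applying the pointwise identity $\max+\min=a+b$ piece by piece. Positivity of $\overline V$ on $S(\Sigma_n)^+$ is immediate from the positivity of each $\nu_{c_i}$.

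The main technical step, highlighted by the authors, is to show that $\overline V$ maps $\|\cdot\|_\infty$-Cauchy sequences of simple functions to Cauchy sequences in $\mathbb R$. This is delicate because Lemma \ref{rims} provides control near a fixed Borel set, whereas the level sets of approximating simple functions vary with the sequence; a naive argument would require uniform continuity, which is not available a priori. The strategy is: given a Cauchy sequence $(s_n)\subset S(\Sigma_n)^+$ uniformly bounded by $\lambda$, converging to some $f\in B(\Sigma_n)^+$, the symmetric differences of the level sets of $s_n$ and $s_m$ for $n,m$ large are contained in outer parallel bands around the level-set boundaries of $f$; the $\mu_\lambda$-measure of these bands tends to zero by Lemma \ref{rims}, and Lemma \ref{l:controlenrims} then bounds $|\overline V(s_n)-\overline V(s_m)|$ by a quantity that goes to zero with $\|s_n-s_m\|_\infty$. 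This is where the structure built in Sections \ref{preliminaryresults} and \ref{controlmeasure} is used most intensively, and it is the main obstacle I expect to overcome in detail.

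Once Cauchy preservation is established, $\overline V$ extends uniquely to a continuous map on $B(\Sigma_n)^+$. To verify compatibility with $\tilde V$ on $C(S^{n-1})^+$, I would approximate any $f\in C(S^{n-1})^+$ uniformly by step functions $s_n$ whose levels partition $[0,\|f\|_\infty]$ finely and whose level sets are of the form $f^{-1}([c_i,c_{i+1}))$; Lemma \ref{l:aproxnu} matches each term $\nu_{c_i}\bigl(f^{-1}([c_i,c_{i+1}))\bigr)$ with $\tilde V$ of a continuous function concentrated at height $c_i$ on the corresponding set, and patching via the valuation identity on $C(S^{n-1})^+$ yields $\overline V(s_n)\to \tilde V(f)$. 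Finally, the valuation identity extends from $S(\Sigma_n)^+$ to $B(\Sigma_n)^+$ because $\vee$ and $\wedge$ are $\|\cdot\|_\infty$-continuous and $\overline V$ is continuous; positivity is preserved under uniform limits; and uniqueness of the extension is immediate from the density of $S(\Sigma_n)^+$ in $B(\Sigma_n)^+$ together with the continuity of any candidate extension.
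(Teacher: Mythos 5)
Your setup — defining $\overline V$ on $S(\Sigma_n)^+$ by $\sum_i\nu_{c_i}(A_i)$, checking well-definedness and the valuation identity by common refinement, and recovering $\tilde V$ on $C(S^{n-1})^+$ via the level sets $f^{-1}([c_i,c_{i+1}))$ together with Lemma \ref{l:aproxnu} — matches the paper. The gap is precisely at the step you yourself flag as the main obstacle: the mechanism you sketch for Cauchy preservation does not work. For a Borel limit $f$ the level sets are arbitrary Borel sets with no topological regularity; their boundaries can be all of $S^{n-1}$ (take $f=\lambda\chi_A$ with $A$ and $A^c$ both dense), in which case the outer parallel band $\{t:0<d(t,\partial A)<\omega\}$ is \emph{empty}, Lemma \ref{rims} gives no information, and the symmetric differences of the level sets of $s_n$ and $s_m$ are certainly not contained in it. More fundamentally, two simple functions with $\|s_n-s_m\|_\infty$ small can live on completely unrelated partitions with unrelated levels, so comparing $\sum_i\nu_{c_i}(A_i)$ with $\sum_j\nu_{d_j}(B_j)$ ultimately requires that $\nu_c(A)$ and $\nu_{c'}(A)$ be uniformly close over \emph{all} Borel sets $A$ when $|c-c'|$ is small. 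That statement is Lemma \ref{l:uniformeenconjuntos}, which the paper derives from the continuity of $\overline V$, i.e., only \emph{after} the extension exists; invoking it here is circular, and proving it directly amounts to the uniform continuity on bounded sets that is exactly what is not available (Question \ref{q:1}). Lemma \ref{l:controlenrims} cannot substitute for it: it compares $\tilde V$ on two \emph{continuous} functions pinched between a closed set and a slightly larger open set, not $\nu_c$ and $\nu_{c'}$ on arbitrary Borel sets.

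The paper's route through this step is genuinely different and you need some version of it. Lemma \ref{l:aproxcontinuas} produces, for each simple $g_i$, a continuous $f_i$ with $\tilde V(f_i)$ close to $\overline V(g_i)$ and with $f_i=g_i$ off a Borel set of $\mu_{\lambda+1}$-measure less than $\epsilon/2^i$. Discarding the union $B_\epsilon$ of these exceptional sets, the $f_i$ are Cauchy only in the seminorm $\|\cdot\|_{A_\epsilon}$, $A_\epsilon=S^{n-1}\setminus B_\epsilon$ (which one may take closed by Lemma \ref{Cauchyadherencia}). The key device is then Dugundji's simultaneous extender (Theorem \ref{thm: Dugundji}): the functions $T(f_i|_{A_\epsilon})$ form a genuine $\|\cdot\|_\infty$-Cauchy sequence in $C(S^{n-1})^+$, where plain (pointwise) continuity of $\tilde V$ suffices, and Lemma \ref{l:controldefunciones} bounds $|\tilde V(T(f_i))-\tilde V(f_i)|$ by $8\epsilon$ because the two continuous functions agree on the closed set $A_\epsilon$ whose complement has small control measure. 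This yields Lemma \ref{l:epsilonCauchy} and then Proposition \ref{p:cauchysequences}. Without this extender argument, or an equivalent way to upgrade ``Cauchy off a set of small control measure'' to honest Cauchyness, the extension to $B(\Sigma_n)^+$ is not justified. The remaining parts of your outline (valuation identity and positivity passing to uniform limits, uniqueness by density) are fine.
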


Also for simplicity, we use the same notation for the valuation $\overline{V}:\mathcal S_b^n\longrightarrow \mathbb R^+$ and its associated function $\overline{V}:B(\Sigma_n)^+\longrightarrow \mathbb R^+$. It will be clear from the context to which of them we refer every time. 

We will start by defining $\overline{V}$ on simple functions: given a simple function $g=\sum_{i=1}^M a_i \chi_{A_i}\in S(\Sigma_n)^+$, with $A_i\cap A_j=\emptyset$ for  $i\not = j$,
we set
$$
\overline{V}(g)=\sum_{i=1}^M \nu_{a_i}(A_i).
$$

We want to extend $\overline{V}$ now to $B(\Sigma_n)^+$, the closure of $S(\Sigma_n)^+$. To do this, we need to show that $\overline{V}:S(\Sigma_n)^+\longrightarrow \mathbb R^+$ preserves  Cauchy sequences. In order to prove this, we need several previous technical results.

The following  lemma is a refinement of Lemma \ref{l:controlenrims}.

\begin{lema}\label{l:controldefunciones}
Let $C\subset S^{n-1}$ be a closed set. Let $\epsilon>0$, $\lambda\geq 0$ and let $G\subset S^{n-1}$ be an open set such that $C\subset G$ and $\mu_{\lambda+1}(G\setminus C)< \epsilon$. For $j=1,2$ let $f_j\in C(S^{n-1})^+$ be such that $f_j\prec G$, $\|f_j\|_\infty\leq \lambda$ and $f_1(t)=f_2(t)$ for every $t\in C$. Then
$$
|\tilde{V}( f_1)-\tilde{V}( f_2)|\leq 8\epsilon.
$$
\end{lema}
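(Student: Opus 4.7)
The proof should follow the same template as Lemma \ref{l:controlenrims}, with one extra maneuver at the start to compensate for the fact that $f_1$ and $f_2$ are no longer both forced to equal $\lambda$ on $C$, but only forced to coincide there. The plan is to first perturb $f_1,f_2$ into auxiliary functions $\tilde f_1,\tilde f_2$ that literally agree on an open neighborhood of $C$, and then run the partition-of-unity argument of Lemma \ref{l:controlenrims} verbatim.

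First I would use the continuity of $\tilde V$ at $f_1$ and $f_2$ to choose $\delta>0$ such that $\|g-f_j\|_\infty<\delta$ implies $|\tilde V(g)-\tilde V(f_j)|<\epsilon$ for $j=1,2$. Since $f_1=f_2$ on $C$ and $f_1,f_2$ are uniformly continuous, I would then pick $\rho>0$ so small that $|f_1(t)-f_2(t)|<\delta$ whenever $d(t,C)<\rho$. Urysohn's lemma, applied to the disjoint closed sets $\{d(\cdot,C)\leq 2\rho/3\}$ and $\{d(\cdot,C)\geq\rho\}$, then yields a continuous $\psi:S^{n-1}\to[0,1]$ equal to $1$ on the first and to $0$ on the second. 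Setting $\tilde f_1=f_1$ and $\tilde f_2=\psi f_1+(1-\psi)f_2$, one checks that $\tilde f_j\in C(S^{n-1})^+$, $\tilde f_j\prec G$, $\|\tilde f_j\|_\infty\leq\lambda$, and most importantly $\tilde f_1=\tilde f_2$ on $J(C,2\rho/3):=\{d(\cdot,C)<2\rho/3\}$, while $\|\tilde f_2-f_2\|_\infty<\delta$ (and $\tilde f_1=f_1$). Thus $|\tilde V(f_j)-\tilde V(\tilde f_j)|<\epsilon$ for each $j$.

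Now I would copy the partition-of-unity step of Lemma \ref{l:controlenrims}: decompose $G=G_1\cup G_2$ with $G_1=G\cap\{d(\cdot,C)<2\rho/3\}$ and $G_2=G\cap\{d(\cdot,C)>\rho/3\}$, apply Lemma \ref{split} to obtain $\varphi_1,\varphi_2$ with $\varphi_i\prec G_i$ and $\varphi_1\vee\varphi_2=\uno$ on $G$, and set $\tilde f_j^i=\varphi_i\tilde f_j$. The defining feature of this decomposition is that $\tilde f_1^1=\tilde f_2^1$ (because $\tilde f_1$ and $\tilde f_2$ agree on $G_1$), while the functions $\tilde f_j^2$ and $\tilde f_j^1\wedge\tilde f_j^2$ are all supported in $G_2\subset G\setminus C$ and bounded by $\lambda\leq\lambda+1$; hence by Observation \ref{Vpequenha} together with $\mu_{\lambda+1}(G\setminus C)<\epsilon$, each of $\tilde V(\tilde f_j^2)$ and $\tilde V(\tilde f_j^1\wedge\tilde f_j^2)$ is at most $\epsilon$.

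Applying the valuation identity $\tilde V(\tilde f_j)=\tilde V(\tilde f_j^1)+\tilde V(\tilde f_j^2)-\tilde V(\tilde f_j^1\wedge\tilde f_j^2)$ for $j=1,2$, subtracting, and using $\tilde V(\tilde f_1^1)=\tilde V(\tilde f_2^1)$, gives $|\tilde V(\tilde f_1)-\tilde V(\tilde f_2)|\leq 4\epsilon$, and then passing back to $f_1,f_2$ via the two continuity estimates costs at most $2\epsilon$ more, yielding $|\tilde V(f_1)-\tilde V(f_2)|\leq 6\epsilon\leq 8\epsilon$. The only genuine obstacle, compared with Lemma \ref{l:controlenrims}, is the construction of $\tilde f_1,\tilde f_2$ in the first paragraph above: once the two functions are made to coincide on an open neighborhood of $C$ at a cost of $\delta$ in the sup-norm, the rest of the argument is a mechanical repetition of the previous proof, which is why the bound $\mu_{\lambda+1}(G\setminus C)<\epsilon$ (rather than $\mu_\lambda$) suffices.
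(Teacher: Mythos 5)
Your proof is correct, and it follows the same overall skeleton as the paper's argument (perturb $f_1,f_2$ into functions that coincide on a neighborhood of $C$, then split along $G_1=G\cap\{d(\cdot,C)<2\rho/3\}$ and $G_2=G\cap\{d(\cdot,C)>\rho/3\}$ via Lemma \ref{split} and kill the pieces supported in $G\setminus C$ with Observation \ref{Vpequenha}). The one step where you genuinely diverge is the construction of $\tilde f_1,\tilde f_2$. The paper first adds a Urysohn bump $h$ with $h|_C=d$, $h\prec G$, to get $\hat f_j=f_j+h$ bounded below by $d$ on $C$, and then forces agreement near $C$ by the multiplicative truncation $\tilde f_j=(\hat f_1\vee\hat f_2)\wedge(\hat f_j/\alpha)$; the lower bound $d$ on $C$ is exactly what makes the relative perturbation $\hat f_j/\alpha$ dominate the additive oscillation $\sigma$ near $C$, and the price is that the norms climb to $\lambda+1$, which is why the hypothesis is stated with $\mu_{\lambda+1}$. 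Your additive interpolation $\tilde f_2=\psi f_1+(1-\psi)f_2$ with a Urysohn $\psi$ sidesteps all of this: it needs no lower bound on the $f_j$ (so no $+h$ step), keeps $\|\tilde f_j\|_\infty\leq\lambda$, perturbs only one of the two functions, and yields the agreement $\tilde f_1=\tilde f_2$ on $\{d(\cdot,C)<2\rho/3\}$ directly from the estimate $|f_1(t)-f_2(t)|<\delta$ for $d(t,C)<\rho$ (which follows from $f_1=f_2$ on $C$ plus uniform continuity). This is simpler and in fact only uses $\mu_\lambda(G\setminus C)<\epsilon$, which is implied by the stated hypothesis since the control measures are monotone in $\lambda$; it also lands you at $5\epsilon$ rather than $8\epsilon$. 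The only cosmetic caveat is the usual strict-versus-nonstrict bookkeeping: to guarantee $\|\tilde f_2-f_2\|_\infty<\delta$ from the pointwise bound you should take the uniform-continuity modulus to give $\delta/2$ per function, or state the continuity condition with $\leq\delta$; this does not affect the argument.
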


\begin{proof}

Since $\tilde{V}$ is continuous at $ f_1$ and $f_2$,  there exists $d_1>0$ such that, for every $f\in C(S^{n-1})^+$, if $\|f- f_j\|_\infty\leq d_1$, then $|\tilde{V}(f)-\tilde{V}( f_j)|<\epsilon$, for $j=1,2$.

We define $d=\min\{d_1,1\}$ and by Urysohn's lemma we consider a function $h\in C(S^{n-1})^+$ such that $0\leq h\leq d$, $h|_{C}=d$ and $h|_{S^{n-1}\backslash G}=0$. For $j=1,2$, let $\hat{f}_j=f_j+h$. We have that, for $j=1,2$, $\hat{f}_j\in C(S^{n-1})^+$ and satisfy

\begin{itemize}
\item $\|\hat{f}_j\|_\infty \leq \lambda+1$

\item $\min_{t\in C} \{\hat{f}_j(t)\}\geq d$

\item $\hat{f}_j\prec G$

\item $|\tilde{V}(\hat{f}_j)-\tilde{V}(f_j)|<\epsilon$.

\end{itemize}

Now we use again the continuity of $\tilde{V}$ to find  $\delta>0$ such that, for $j=1,2$, for every $g\in C(S^{n-1})^+$, if $\|g- \hat{f}_j\|_\infty\leq \delta$ then $|\tilde{V}(g)-\tilde{V}(\hat{f}_j)|<\epsilon$.

We choose a real number  $\alpha$ such that
$$
\frac{1}{1+\frac{\delta}{\lambda+1}}<\alpha<1.
$$

For $j=1,2$ we define the functions $$\tilde{f}_j=(\hat{f}_1\vee \hat{f}_2)\wedge \left(\frac{\hat{f}_j}{\alpha} \right).$$

Then, for $j=1,2$, we have that  $\tilde{f}_j\in C(S^{n-1})^+$ with $\tilde{f}_j\prec G$ and $\|\tilde{f}_j\|_\infty\leq \lambda+1$.

We define $\sigma=\frac{d}{2}\left(\frac{1-\alpha}{1+\alpha}\right)>0$ and,  using the fact that  both $\hat{f}_1$ and $\hat{f}_2$ are uniformly continuous, we get the existence of $\rho$ such that, for $j=1,2$, and for every $t,s\in S^{n-1}$, $|t-s|<\rho$ implies that $|\hat{f}_j(t)-  \hat{f}_j(s)|<\sigma$.

Now, we have that for every $t$ such that $d(t,C)<\rho$,
\begin{equation}\label{eq:igualesenrim}\tilde{f}_1(t)=\tilde{f}_2(t).\end{equation}

Indeed, take $t$ such that $d(t,C)<\rho$. Then, there exists $s_0\in C$ such that $|t-s_0|<\rho$. Therefore, for $j=1,2$, $\hat{f}_j(t)<\hat{f}_j(s_0)+\sigma$ and, hence, $$\hat{f}_1(t)\vee \hat{f}_2(t)<\hat{f}_j(s_0)+\sigma.$$

Moreover $$\frac{\hat{f}_j(t)}{\alpha}>\frac{1}{\alpha}\left(\hat{f}_j(s_0)-\sigma\right).$$

Now, the definition of $\sigma$ implies that $\hat{f}_1(t)\vee \hat{f}_2(t)<\frac{\hat{f}_j(t)}{\alpha}$ and, hence, $\tilde{f}_1(t)=\tilde{f}_2(t)=\hat{f}_1(t)\vee \hat{f}_2(t)$.

\smallskip

Next, we show that $$\|\tilde{f}_j-\hat{f}_j\|_\infty<\delta.$$ To see this, note first that if $\hat{f}_1(t)\vee \hat{f}_2(t)= \hat{f}_j(t)$, then $\tilde{f}_j(t)=\hat{f}_j(t)\wedge \frac{\hat{f}_j(t)}{\alpha}=\hat{f}_j(t)$, so we only need to consider the case when $\hat{f}_1(t)\vee \hat{f}_2(t)= \hat{f}_k(t)$, with $k\not = j$. In that case, $\tilde{f}_j(t)=\hat{f}_k(t)\wedge \frac{\hat{f}_j(t)}{\alpha}$ and we have
\begin{align*}
\left|\tilde{f}_j(t)-\hat{f}_j(t)\right|&=\left|\left(\hat{f}_k(t)\wedge \frac{\hat{f}_j(t)}{\alpha}\right)-\hat{f}_j(t)\right|=\left(\hat{f}_k(t)\wedge \frac{\hat{f}_j(t)}{\alpha}\right)-\hat{f}_j(t)\\
& \leq \frac{\hat{f}_j(t)}{\alpha}-\hat{f}_j(t)=\hat{f}_j(t)\left(\frac{1}{\alpha}-1\right)\\
&\leq (\lambda +1)\left(\frac{1}{\alpha}-1\right)< \delta,
\end{align*}
where the last inequality follows from our choice of $\alpha$.

We consider now the open sets $G_1=\{t\in G:\,d(t, C)<\frac{2\rho}{3}\}$ and $G_2=\{t\in G:\,\frac{\rho}{3}<d(t, C)\}$. We consider two functions $\varphi_i\prec G_i$, $i=1,2$ as in Lemma \ref{split} and for $i=1,2$, $j=1,2$ we define the function $\tilde{f}_j^i=\varphi_i \tilde{f}_j$.

Thus defined, the functions  $\tilde{f}_j^i$ satisfy

\begin{itemize}
\item Since $\tilde{f}_1(t)=\tilde{f}_2(t)$ for every $t\in G_1$, we get that  $\tilde{f}_1^1=\tilde{f}_2^1$.

\item $\supp(\tilde{f}_j^2)\subset G\setminus C$, for $j=1,2$.
\item $\tilde{f}_j=\tilde{f}_j^1\vee \tilde{f}_j^2$, for $j=1,2$.
\item $\supp(\tilde{f}_j^1\wedge\tilde{f}_j^2)\subset G\setminus C$.
\item For $i,j=1,2$, $\|\tilde{f}_j^i\|_\infty\leq \lambda+1$.
\end{itemize}

We note that, for  $j=1,2$, $$\tilde{V}(\tilde{f}_j)=\tilde{V}(\tilde{f}_j^1)+ \tilde{V}(\tilde{f}_j^2) - \tilde{V}(\tilde{f}_j^1\wedge \tilde{f}_j^2)).$$

Finally, using the fact that $\mu_{\lambda+1}(G\setminus C)<\epsilon$ and Observation \ref{Vpequenha}, we have
\begin{align*}
\left|\tilde{V}(f_1)-\tilde{V}(f_2)\right | &\leq \left|\tilde{V}(\hat{f}_1)-\tilde{V}(\hat{f}_2)\right |+2\epsilon \leq \left|\tilde{V}(\tilde{f}_1)-\tilde{V}(\tilde{f}_2)\right |+4\epsilon\\
&\leq \left| \tilde{V}(\tilde{f}_1^2) - \tilde{V}(\tilde{f}_1^1\wedge \tilde{f}_1^2)- \tilde{V}(\tilde{f}_2^2) +\tilde{V}(\tilde{f}_2^1\wedge \tilde{f}_2^2))\right|+4\epsilon\leq 8\epsilon.
\end{align*}

\end{proof}

\begin{lema}\label{l:aproxcontinuas}
Let $g=\sum_{i=1}^M a_i \chi_{A_i}$ be a positive simple function with $A_i\cap A_j=\emptyset$ for every $i\neq j$. Let $\lambda=\|g\|_\infty$. Then, for every $\rho>0, \epsilon>0$  there exists $f\in C(S^{n-1})^+$ and a set $A\subset S^{n-1}$ with $\mu_\lambda(S^{n-1}\setminus A)<\epsilon$ such that $|\tilde{V}(f)- \sum_{i=1}^M \nu_{a_i}(A_i)|<\rho$ and $f(t)=g(t)$ for every $t\in A$. Moreover, $f$ can be chosen so that $\|f\|_\infty\leq \lambda$.
\end{lema}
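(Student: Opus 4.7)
The plan is to approximate each atom of $g$ by a compact set from inside (in the $\mu_\lambda$-sense), apply Lemma \ref{l:aproxnu} to build a continuous bump around each compact piece, and glue the bumps together using disjointness of their supports. I may assume without loss of generality that $a_i>0$ for every $i=1,\dots,M$ (atoms with $a_i=0$ contribute nothing), and I will fix a single smallness parameter $\delta>0$ at the very end. By regularity of the finite Borel measure $\mu_\lambda$ on $S^{n-1}$, I pick for each $i$ a compact set $C_i\subset A_i$ with $\mu_\lambda(A_i\setminus C_i)<\delta$. The $C_i$'s are pairwise disjoint and compact, hence at positive mutual distance, so I can choose $\omega>0$ small enough that the enlargements $C_i\cup (C_i)_\omega$ are pairwise disjoint. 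Shrinking $\omega$ further using Lemma \ref{rims} (together with the identity $\mu_{a_i}((C_i)_\omega)=\sup\{\tilde V(f):\,f\prec (C_i)_\omega,\,\|f\|_\infty\leq a_i\}$ valid for positive $\tilde V$ on open sets), I also impose $\mu_{a_i}((C_i)_\omega)<\delta$ for every $i$.

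Next, for each $i$ I apply Lemma \ref{l:aproxnu} with level $a_i$, closed set $C_i$ and open set $C_i\cup (C_i)_\omega$ to obtain $f_i\in C(S^{n-1})^+$ with $\|f_i\|_\infty\leq a_i$, $C_i\prec f_i/a_i\prec C_i\cup (C_i)_\omega$, and $\nu_{a_i}(C_i)\leq \tilde V(f_i)\leq \nu_{a_i}(C_i)+7\delta$; in particular $f_i\equiv a_i$ on $C_i$. I set $f:=\bigvee_{i=1}^M f_i$ and $A:=\bigcup_{i=1}^M C_i$. Then $\|f\|_\infty\leq\max_i a_i=\lambda$; since $\supp(f_i)\subset C_i\cup (C_i)_\omega$ and these sets are pairwise disjoint, we have $f|_A=g|_A$ and $\mu_\lambda(S^{n-1}\setminus A)\leq \sum_i\mu_\lambda(A_i\setminus C_i)<M\delta$. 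Disjointness also forces $f_i\wedge f_j=0$ for $i\neq j$, so iterating the valuation identity (and using $\tilde V(0)=0$) gives $\tilde V(f)=\sum_i\tilde V(f_i)$.

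For the numerical estimate I also need the comparison $\nu_{a_i}\leq\mu_\lambda$, which I will extract from Lemma \ref{lambdaG}: for compact $K$ inside an open $G$, that lemma gives $\zeta_{a_i}(K)\leq\mu_{a_i}(G)\leq\mu_\lambda(G)$ when $a_i\leq\lambda$, so $\nu_{a_i}(G)\leq\mu_\lambda(G)$, and outer regularity of both set functions extends the inequality to every Borel set. Therefore
$$
\Big|\tilde V(f)-\sum_{i=1}^M\nu_{a_i}(A_i)\Big|\leq \sum_{i=1}^M\Big(|\tilde V(f_i)-\nu_{a_i}(C_i)|+\nu_{a_i}(A_i\setminus C_i)\Big)<8M\delta,
$$
and choosing $\delta<\min\{\epsilon/M,\rho/(8M)\}$ at the outset yields both conclusions of the lemma.

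The hardest part is coordinating the smallness of \emph{two different} control measures simultaneously: $\mu_\lambda$ on the ``loss'' $A_i\setminus C_i$ (needed to control the exceptional set $S^{n-1}\setminus A$) and each $\mu_{a_i}$ on the ``rim'' $(C_i)_\omega$ (needed to apply Lemma \ref{l:aproxnu}), while also maintaining pairwise disjointness of the enlarged rims. The auxiliary comparison $\nu_{a_i}\leq\mu_\lambda$ does not appear in Section \ref{controlmeasure} and has to be derived on the fly from Lemma \ref{lambdaG} plus monotonicity of the control measures in the level parameter.
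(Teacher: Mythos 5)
Your construction is essentially a streamlined variant of the paper's. Where the paper takes open sets $G_i\supset A_i$ that necessarily overlap (after normalizing so that the $A_i$ cover $S^{n-1}$), builds a lattice partition of unity subordinate to them via Lemma \ref{split}, and must then invoke the inclusion--exclusion formula of \cite[Lemma 3.1]{Vi} and kill all overlap terms $\tilde{V}(f_{i_1}\wedge\cdots\wedge f_{i_k})$ through Observation \ref{Vpequenha}, you shrink each atom to a compact $C_i$ and fatten it only by a rim $(C_i)_\omega$ so small that the supports of the $f_i$ are pairwise disjoint; the identity $\tilde{V}(f)=\sum_i\tilde{V}(f_i)$ then falls out of the valuation property with no correction terms. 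Your explicit derivation of $\nu_{a_i}\leq\mu_\lambda$ from Lemma \ref{lambdaG} together with the monotonicity of $\mu_\lambda$ in $\lambda$ is also needed, implicitly, in the paper's own estimate of $\sum_j\nu_{a_j}(A_j\setminus K_j)$, so making it explicit is a genuine service.

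There is one small but real gap. Your reduction to $a_i>0$ discards the zero set of $g$, and with $A=\bigcup_i C_i$ you then have $S^{n-1}\setminus A\supset S^{n-1}\setminus\bigcup_i A_i$, whose $\mu_\lambda$-measure is not bounded by $\sum_i\mu_\lambda(A_i\setminus C_i)$ unless the $A_i$ happen to cover the sphere: for $g=\chi_{A_1}$ with $\mu_\lambda(S^{n-1}\setminus A_1)\geq\epsilon$ the claimed bound $\mu_\lambda(S^{n-1}\setminus A)<M\delta$ fails. The repair costs one line: additionally impose $\mu_\lambda\bigl((C_i)_\omega\bigr)<\delta$ (again available from Lemma \ref{rims}) and take
$$
A=\bigcup_{i=1}^M C_i\cup\Bigl(S^{n-1}\setminus\bigcup_{i=1}^M\bigl(A_i\cup(C_i)_\omega\bigr)\Bigr),
$$
on which $f$ and $g$ still coincide (both vanish on the added piece, since $\supp(f_i)\subset C_i\cup(C_i)_\omega$) and whose complement is contained in $\bigcup_i\bigl((A_i\setminus C_i)\cup(C_i)_\omega\bigr)$, of measure at most $2M\delta$. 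Alternatively, do as the paper does and first normalize $\bigcup_i A_i=S^{n-1}$ by adjoining the zero atom. With that repair your argument is complete and correct.
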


\begin{proof}
Without loss of generality we can assume that  $\bigcup_{i=1}^M A_i=S^{n-1}$.
Let $N=\sum_{k=2}^M {M \choose  k}$.
Using the  regularity of $\mu_\lambda$, for every $1\leq i \leq M$, we choose a closed set $K_i$ and an open set $G'_i$ such that  $K_i\subset A_i \subset G'_i$ and such that
$$
\mu_\lambda(G'_i\setminus K_i)<\min\{\frac{\epsilon}{M}, \frac{\rho}{21M}, \frac{\rho}{2N}\}.
$$

Next, for $1\leq i\leq M$, we define  $$G_i=G'_i\cap \bigcap_{j\not = i} K_j^c.$$ Note that we still have
\begin{equation}\label{eq:cond}
\mu_\lambda(G_i\setminus K_i)<\min\{\frac{\epsilon}{M}, \frac{\rho}{21M}, \frac{\rho}{2N}\}.
\end{equation}

Clearly,  $\bigcup_{i=1}^M G_i=S^{n-1}$. We use Lemma \ref{split} to choose a lattice partition of the unity $(\varphi_i)_{i=1}^M$ with $\varphi_i\prec G_i$ and $\bigvee_{i=1}^M \varphi_i=\uno$.

We define $f_i=a_i \varphi_i$ and $f=\bigvee_{i=1}^M f_i$. Then, on the one hand, for every $t\in A=\cup_{i=1}^M K_i$, $f(t)=g(t)$. Note that $$\mu_\lambda(S^{n-1}\setminus A)\leq \sum_{i=1}^M \mu_\lambda(G_i\setminus K_i)< \epsilon.$$

On the other hand, for every $1\leq i \leq M$, $K_i\prec\frac{f_i}{a_i}\prec G_i$. Therefore it follows from Lemma \ref{l:aproxnu} and condition (\ref{eq:cond}) that $$\left|\tilde{V}(f_i)-\nu_{a_i}(K_i)\right|<\frac{\rho}{3M}.$$ Also, for every $i\not = j$, $\supp(f_i\wedge f_j)\subset G_i\setminus K_i$, which, by Observation \ref{Vpequenha}, implies that, for $k\geq2$ and $1\leq i_1<i_2<\ldots<i_k\leq M$, we have that 
$$
\tilde{V}(f_{i_1}\wedge f_{i_2}\wedge\cdots \wedge f_{i_k})<\rho/2N.
$$

We can now apply \cite[Lemma 3.1]{Vi} and we get

\begin{align*}
\left|\tilde{V}(f)-\sum_{j=1}^M \nu_{a_j}(A_j)\right| & = \left|\sum_{k=1}^M(-1)^{k-1}\!\!\!\!\!\!\sum_{1\leq i_1<\ldots<i_k\leq M} \!\!\!\tilde{V}(f_{i_1}\wedge \cdots \wedge f_{i_k}) - \sum_{j=1}^M \nu_{a_j}(A_j)\right| \\
& \leq \left|\sum_{j=1}^M \tilde{V}(f_j) - \sum_{j=1}^M \nu_{a_j}(A_j)\right|+\sum_{k=2}^M\sum_{1\leq i_1<\ldots<i_k\leq M}\frac{\rho}{2N} \\
& \leq \sum_{j=1}^M \left|\tilde{V}(f_j) -  \nu_{a_j}(K_j)\right|+ \sum_{j=1}^M \nu_{a_j}(A_j\backslash K_j)+\frac{\rho}{2} \\
&<\rho.
\end{align*}

For the last part of the statement, note that, for $1\leq i \leq M$, $\|f_i\|_\infty\leq a_i$.
\end{proof}

Given a Borel set $A\subset S^{n-1}$ and $g\in B(\Sigma_n)$, we define $\|g\|_A=\sup_{t\in A} |g(t)|$.

The following lemma is a simple consequence of the fact that for $g\in C(S^{n-1})^+$, an any $A\subset S^{n-1}$, we have $\sup_{t\in A} |g(t)|=\sup_{t\in \overline{A}} |g(t)|$.

\begin{lema}\label{Cauchyadherencia}
Let $(f_i)_{i\in \mathbb N} \subset C(S^{n-1})^+$, and let $A\subset S^{n-1}$ be a Borel set. If the sequence of restrictions $({f_i}_{|_{A}})_{i\in\mathbb N}$ is a Cauchy sequence for the norm $\|\cdot\|_A$, then the sequence $({f_i}_{|_{\overline{A}}})_{i\in\mathbb N}$ (the sequence of restrictions to $\overline{A}$) is also a Cauchy sequence for the norm  $\|\cdot\|_{\overline{A}}$.
\end{lema}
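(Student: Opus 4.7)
The plan is to apply the hint given in the paper directly to the differences $f_i-f_j$. Since $f_i,f_j\in C(S^{n-1})^+\subset C(S^{n-1})$, their difference $f_i-f_j$ is continuous on $S^{n-1}$, and hence so is the function $|f_i-f_j|$. The stated observation says that for any continuous function $g$ on $S^{n-1}$ and any subset $A\subset S^{n-1}$, one has $\sup_{t\in A}|g(t)|=\sup_{t\in\overline{A}}|g(t)|$; this is just the standard fact that a continuous function attains on $\overline{A}$ the same supremum it attains on $A$, because any value it takes on a limit point of $A$ is approached by values taken on $A$.

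Applying this with $g=f_i-f_j$ gives
\[
\|f_i-f_j\|_A = \sup_{t\in A}|f_i(t)-f_j(t)| = \sup_{t\in \overline{A}}|f_i(t)-f_j(t)| = \|f_i-f_j\|_{\overline{A}}.
\]
Hence, given $\varepsilon>0$, if $N$ is chosen so that $\|f_i-f_j\|_A<\varepsilon$ for all $i,j\geq N$ (which exists by the Cauchy hypothesis on $A$), the displayed equality yields $\|f_i-f_j\|_{\overline{A}}<\varepsilon$ for the same indices, proving that the restricted sequence on $\overline{A}$ is Cauchy in $\|\cdot\|_{\overline{A}}$.

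There is no real obstacle here; the argument is a one-line reduction once the observation about continuous functions and closures is invoked. The only minor point to be careful about is that although the $f_i$ themselves lie in $C(S^{n-1})^+$, the differences $f_i-f_j$ are signed, so one works with $|f_i-f_j|$, which is still a continuous (nonnegative) function and to which the observation applies without change.
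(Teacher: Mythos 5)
Your argument is correct and is exactly the one the paper intends: it states the lemma as a direct consequence of the fact that a continuous function has the same supremum over $A$ and over $\overline{A}$, applied to $|f_i-f_j|$. Nothing further is needed.
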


We will need the following result of Dugundji which we state for completeness (\cite[Theorem 5.1]{Du}).
\begin{teo}\label{thm: Dugundji}
Let $K$ be a compact metric space, and let $A\subset K$ be a closed subset. Then there exists a norm one simultaneous extender, that is,  a norm one injective continuous linear mapping $T:C(A)\rightarrow C(K)$ such that, for every $f\in C(A)$, $T(f)_{|_A}=f$. Moreover, $T$ can be chosen so that, for every $f\in C(A)^+$, $T(f)\in C(K)^+$.
\end{teo}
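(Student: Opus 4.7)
The plan is to give the standard Dugundji construction adapted to our compact metric setting, where the needed linear extender will be built from a partition of unity associated with a suitable open refinement of $K\setminus A$ together with a ``nearest point'' selection from $A$.

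First I would exploit the metric structure of $K$. Since $A$ is closed, the distance function $x\mapsto d(x,A)$ is continuous and strictly positive on $K\setminus A$. For each $x\in K\setminus A$ I would consider the open ball $B(x,d(x,A)/3)$, obtaining an open cover of the open metric subspace $K\setminus A$. Metric spaces are paracompact, so this cover admits a locally finite open refinement $\{U_i\}_{i\in I}$ of $K\setminus A$. For every index $i$ I would pick a point $x_i\in U_i$ and then choose $a_i\in A$ with $d(x_i,a_i)\le 2\, d(x_i,A)$ (which is possible because the infimum $d(x_i,A)$ may not be attained but can be approximated). Finally, by a standard partition-of-unity argument in the paracompact space $K\setminus A$, I obtain continuous functions $\varphi_i:K\setminus A\to [0,1]$ subordinate to $\{U_i\}$ with $\sum_{i\in I}\varphi_i=1$ pointwise on $K\setminus A$.

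With these ingredients I would define, for $f\in C(A)$,
$$
T(f)(x)=\begin{cases} f(x) & \text{if }x\in A,\\[2pt] \displaystyle\sum_{i\in I}\varphi_i(x)\,f(a_i) & \text{if }x\in K\setminus A.\end{cases}
$$
Linearity is obvious, and the restriction property $T(f)|_A=f$ is built in. The norm estimate is immediate from the convexity of the average: $|T(f)(x)|\le \sum_i\varphi_i(x)|f(a_i)|\le\|f\|_\infty$, and taking $f\equiv 1$ (which yields $T(f)\equiv 1$) forces $\|T\|=1$. Positivity preservation is likewise automatic: if $f\ge 0$ on $A$, every $f(a_i)\ge 0$ and so $T(f)\ge 0$. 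Injectivity follows because $T(f)|_A=f$, so $T(f)=0$ forces $f=0$.

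The only serious point, and the one I expect to be the main obstacle, is continuity of $T(f)$ at points of $A$. On $K\setminus A$ continuity is clear from the local finiteness of $\{U_i\}$ and the continuity of each $\varphi_i$, but for $a\in A$ one must control $T(f)(x)$ as $x\to a$ from $K\setminus A$. The geometric point is that if $x\in U_i$ then $x$ lies in some ball of radius $d(x_j,A)/3$ around a center $x_j$, which forces $d(x_j,A)\le \tfrac32 d(x,A)$, and combining with the choice $d(x_i,a_i)\le 2\,d(x_i,A)$ and the triangle inequality yields an estimate of the form $d(a_i,a)\le C\, d(x,a)$ for a universal constant $C$ (independent of $i$), whenever $U_i$ meets a sufficiently small neighborhood of $a$. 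Given uniform continuity of $f$ on the compact set $A$, this implies $|f(a_i)-f(a)|<\varepsilon$ uniformly over the relevant indices, so
$$
\bigl|T(f)(x)-f(a)\bigr|=\Bigl|\sum_{i\in I}\varphi_i(x)\bigl(f(a_i)-f(a)\bigr)\Bigr|\le \varepsilon,
$$
completing continuity at $a$ and thus the proof. The rest is bookkeeping; the delicate part is fixing the constants in the radii $d(x,A)/3$ and the factor $2$ in the selection of $a_i$ so that the geometric estimate above goes through cleanly.
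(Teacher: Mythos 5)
Your proposal is correct: it reproduces exactly the construction of Dugundji's extension theorem, which is what the paper relies on (the paper does not prove this statement but simply cites Dugundji, noting only that positivity preservation is immediate from that construction, as your argument also shows). The geometric estimates you sketch (radius $d(x,A)/3$, selection $d(x_i,a_i)\le 2\,d(x_i,A)$, yielding $d(a_i,a)\le C\,d(x,a)$ uniformly over the indices with $\varphi_i(x)\neq 0$) do go through and give continuity at points of $A$.
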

\begin{proof}
Only the last  statement is not explicitly stated in \cite{Du}, but it follows immediately from the proof.
\end{proof}

We can now prove the following.
\begin{lema}\label{l:epsilonCauchy}
Let $\lambda\geq 0$, $\epsilon>0$. Let $B\subset S^{n-1}$ be a Borel set with $\mu_{\lambda+1}(B)<\epsilon$. Let $A=S^{n-1}\backslash B$ and let  $(f_i)_{i\in \mathbb N}\subset C(S^{n-1})^+$ be a sequence such that $({f_i}_{|_{A}})_{i\in\mathbb N}$ is a Cauchy sequence for the norm $\|\cdot\|_A$ and such that $\|f_i\|_\infty\leq \lambda$ for every $i\in \mathbb N$. Then, for every $\rho>0$ there exists $N\in \mathbb N$ such that, for every $p,q>N$, $$|\tilde{V}(f_p)-\tilde{V}(f_q)|\leq 16\epsilon+\rho.$$
\end{lema}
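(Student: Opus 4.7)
My plan is to combine the ``rim-type'' estimate in Lemma \ref{l:controldefunciones} with the Dugundji extender of Theorem \ref{thm: Dugundji}, so as to transfer the Cauchy behaviour of $(f_i)$ on $A$ to an application of the \emph{pointwise} continuity of $\tilde V$ at one well-chosen limit. First, using the regularity of $\mu_{\lambda+1}$, I pick an open set $G\supset B$ with $\mu_{\lambda+1}(G)<\epsilon$ and set $C:=S^{n-1}\setminus G$. Then $C$ is closed, $C\subset A$, and $\mu_{\lambda+1}(S^{n-1}\setminus C)<\epsilon$. Since $C\subset A$, the restrictions $(f_i|_C)$ form a Cauchy sequence for $\|\cdot\|_C$.

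Next, I invoke Theorem \ref{thm: Dugundji} to obtain a norm-one positive linear extender $T\colon C(C)\to C(S^{n-1})$ and define $g_i:=T(f_i|_C)\in C(S^{n-1})^+$. By construction $g_i|_C=f_i|_C$ and $\|g_i\|_\infty\le\|f_i|_C\|_C\le\lambda$; by linearity and norm-one of $T$,
\[
\|g_p-g_q\|_\infty=\|T(f_p|_C-f_q|_C)\|_\infty\le\|f_p-f_q\|_A\to 0.
\]
Hence $(g_i)$ is Cauchy in the Banach space $(C(S^{n-1}),\|\cdot\|_\infty)$ and converges to some $g_\infty\in C(S^{n-1})^+$ with $\|g_\infty\|_\infty\le\lambda$. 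The continuity of $\tilde V$ at the \emph{single} point $g_\infty$ then yields $\tilde V(g_i)\to\tilde V(g_\infty)$, so $(\tilde V(g_i))_i$ is Cauchy in $\mathbb R$; in particular, given $\rho>0$, there is $N$ with $|\tilde V(g_p)-\tilde V(g_q)|<\rho$ for all $p,q\ge N$.

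To close the argument, I apply Lemma \ref{l:controldefunciones} to each pair $(f_i,g_i)$ with the closed set $C$ and the open set $G^{*}:=S^{n-1}$ (which trivially contains $C$ and satisfies $\mu_{\lambda+1}(G^{*}\setminus C)=\mu_{\lambda+1}(G)<\epsilon$). Since $f_i=g_i$ on $C$, both are supported in $G^{*}$, and $\|f_i\|_\infty,\|g_i\|_\infty\le\lambda$, the lemma gives $|\tilde V(f_i)-\tilde V(g_i)|\le 8\epsilon$ for every $i$. The triangle inequality then yields $|\tilde V(f_p)-\tilde V(f_q)|\le 16\epsilon+\rho$ for all $p,q\ge N$. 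The main obstacle is that $\tilde V$ is not assumed to be uniformly continuous on bounded sets, so the sup-norm proximity of $f_p$ and $f_q$ on $A$ alone does not suffice; the Dugundji step circumvents this by replacing the original sequence with one that converges globally in sup norm, while the rim-type estimate absorbs the discrepancy on the small set $G$.
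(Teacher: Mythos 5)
Your proof is correct and follows essentially the same route as the paper: replace $A$ by a closed subset on which the $f_i$ still agree with controlled exceptional measure, apply the Dugundji extender to get a sup-norm convergent sequence, use pointwise continuity of $\tilde V$ at the limit, and absorb the discrepancy via Lemma \ref{l:controldefunciones} with $G=S^{n-1}$. The only cosmetic difference is that you obtain the closed set by regularity of $\mu_{\lambda+1}$ (enlarging $B$ to an open $G$), whereas the paper passes to $\overline{A}$ via Lemma \ref{Cauchyadherencia}; both reductions are valid.
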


\begin{proof}
Using Lemma \ref{Cauchyadherencia} we may assume that $A$ is a closed set, thus $B$ is open. We consider the simultaneous extender $T:C(A)\rightarrow C(S^{n-1})$ of Theorem \ref{thm: Dugundji}. Then, for every $i\in \mathbb N$, $\|T(f_i)\|_\infty\leq \lambda$ and $\left(T(f_i)\right)_{i\in \mathbb N}\subset C(S^{n-1})^+$  is a Cauchy sequence for the supremum norm, hence converges to some $f\in C(S^{n-1})^+$. Therefore,  there exists $i_0$ such that, for every $p,q\geq i_0$,
$$
\left|\tilde{V}(T(f_p))-\tilde{V}(T(f_q))\right|<\rho.
$$

Lemma \ref{l:controldefunciones} implies that, for every $i\in \mathbb N$,
$$
\left|\tilde{V}(T(f_i))-\tilde{V}(f_i)\right|\leq 8\epsilon.
$$

Therefore, for every $i\geq i_0$, we have
\begin{align*}
\left|\tilde{V}(f_p)-\tilde{V}(f_q)\right|&\leq \left|\tilde{V}(f_p)-\tilde{V}(T(f_p)) \right|+ \left|\tilde{V}(T(f_p))- \tilde{V}(T(f_q))\right|+ \\ &+\left|\tilde{V}(T(f_q))-\tilde{V}(f_q)\right|<\rho+ 16\epsilon.
\end{align*}
\end{proof}

Finally, we can prove the result that will allow us to extend $\overline{V}$ to $B(\Sigma_n)^+$:

\begin{prop}\label{p:cauchysequences}
Let $(g_i)_{i\in \mathbb N}$ be a  Cauchy sequence of simple functions. Then $\left(\overline{V}(g_i)\right)_{i\in \mathbb N}$ is a Cauchy sequence of real numbers.
\end{prop}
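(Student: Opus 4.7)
My plan is to transfer the Cauchy property from the simple sequence $(g_i)$ to suitable continuous approximations, invoke Lemma \ref{l:epsilonCauchy} there, and transport the conclusion back via Lemma \ref{l:aproxcontinuas}. Since $(g_i)$ is Cauchy for $\|\cdot\|_\infty$, it is uniformly bounded: fix $\lambda$ with $\|g_i\|_\infty\leq\lambda$ for every $i$, and work throughout with the control measure $\mu_{\lambda+1}$, which is regular and finite by Lemma \ref{l:bbs} and Ulam's theorem.

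Given $\eta>0$, I will choose auxiliary parameters $\epsilon,\rho>0$ (to be fixed at the end) and, for each $i$, apply a mild reinforcement of Lemma \ref{l:aproxcontinuas} to produce $f_i\in C(S^{n-1})^+$ and a Borel set $A_i\subset S^{n-1}$ with the properties $f_i=g_i$ on $A_i$, $\|f_i\|_\infty\leq\|g_i\|_\infty\leq\lambda$, $|\tilde V(f_i)-\overline V(g_i)|<\eta/3$, and $\mu_{\lambda+1}(S^{n-1}\setminus A_i)<\epsilon/2^i$. The reinforcement consists in measuring the bad set with $\mu_{\lambda+1}$ instead of $\mu_{\|g_i\|_\infty}$; this costs nothing because the sets $K_i\subset A_i\subset G_i'$ in the proof of Lemma \ref{l:aproxcontinuas} can be picked using the regularity of $\mu_{\lambda+1}$, while the subsequent applications of Observation \ref{Vpequenha} concern functions of norm at most $a_j\leq\lambda$, so they still hold.

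Now set $A=\bigcap_i A_i$. By countable subadditivity $\mu_{\lambda+1}(S^{n-1}\setminus A)\leq\epsilon$, and since $f_i=g_i$ on $A$ for every $i$, we have $\|f_i-f_j\|_A\leq \|g_i-g_j\|_\infty$, so $(f_i|_A)$ is Cauchy for $\|\cdot\|_A$. Lemma \ref{l:epsilonCauchy} applied to this sequence yields an index $N$ such that $|\tilde V(f_p)-\tilde V(f_q)|\leq 16\epsilon+\rho$ for all $p,q>N$. With $\epsilon$ and $\rho$ chosen so that $16\epsilon+\rho<\eta/3$, the triangle inequality
\[
|\overline V(g_p)-\overline V(g_q)|\leq|\overline V(g_p)-\tilde V(f_p)|+|\tilde V(f_p)-\tilde V(f_q)|+|\tilde V(f_q)-\overline V(g_q)|
\]
gives $|\overline V(g_p)-\overline V(g_q)|<\eta$, as required.

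The main difficulty I anticipate is conceptual rather than computational: all the continuous approximations must agree with their respective $g_i$ on a single large set $A$, which is what allows us to invoke Lemma \ref{l:epsilonCauchy}. The geometric summability trick $\mu_{\lambda+1}(S^{n-1}\setminus A_i)<\epsilon/2^i$ is the mechanism that achieves this. The only subtlety in the execution is the need to strengthen Lemma \ref{l:aproxcontinuas} from $\mu_{\|g_i\|_\infty}$-control to $\mu_{\lambda+1}$-control, so that the measure bound matches exactly the hypothesis of Lemma \ref{l:epsilonCauchy}; this is essentially cosmetic, but it has to be noted.
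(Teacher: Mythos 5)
Your argument is correct and is essentially the paper's own proof: approximate each $g_i$ by a continuous $f_i$ agreeing with it off a set of $\mu_{\lambda+1}$-measure less than $\epsilon/2^i$, take the union of the exceptional sets, and apply Lemma \ref{l:epsilonCauchy} followed by the triangle inequality. Your remark that Lemma \ref{l:aproxcontinuas} must be (harmlessly) strengthened from $\mu_{\|g_i\|_\infty}$-control to $\mu_{\lambda+1}$-control is accurate --- the paper uses that strengthening silently --- and your justification of why it costs nothing is sound.
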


\begin{proof}
Since $(g_i)_{i\in \mathbb N}$ is  Cauchy, there exists $\sup_{i\in\mathbb N}\|g_i\|_\infty=\lambda<\infty$. We fix $\epsilon>0$. According to Lemma \ref{l:aproxcontinuas}, for every $i\in \mathbb N$ there exist $f_i^\epsilon\in C(S^{n-1})^+$ such that
$|\tilde{V}(f_i^\epsilon)- \overline{V}(g_i)|<\epsilon$ and a Borel set $B_i^\epsilon\subset S^{n-1}$, with $\mu_{\lambda+1}(B_i^\epsilon)<\frac{\epsilon}{2^i}$, such that, for every $t\in A_i^\epsilon:= S^{n-1}\setminus B_i^\epsilon$,  $f_i^\epsilon(t)=g_i(t)$.

Let $B_\epsilon=\bigcup_{i\in \mathbb N}  B_i^\epsilon$. Then $\mu_{\lambda+1}(B_\epsilon)<\epsilon$, and let $A_\epsilon=S^{n-1}\setminus B_\epsilon$.

We can apply Lemma \ref{l:epsilonCauchy} and we get the existence of $N\in\mathbb N$ such that, for every $p,q\geq N$,  $$\left|\tilde{V}(f_p^\epsilon)-\tilde{V}(f_q^\epsilon)\right|<17\epsilon.$$

Therefore, for $p,q\geq N$ we have
$$\left|\overline{V}(g_p)-\overline{V}(g_q)\right|\leq \left|\overline{V}(g_p)-\tilde{V}(f_p^\epsilon)\right|+ \left|\tilde{V}(f_p^\epsilon)-\tilde{V}(f_q^\epsilon)\right|+ \left|\tilde{V}(f_q^\epsilon)-\overline{V}(g_q)\right|\leq 19\epsilon. $$
\end{proof}

Therefore, $\overline{V}:S(\Sigma_n)^+\longrightarrow \mathbb R^+$ can be extended uniquely to a continuous function, which we will denote equally $\overline{V}:B(\Sigma_n)^+\longrightarrow \mathbb R^+$. Namely, given $f\in B(\Sigma_n)^+$ and \emph{any} sequence $(f_n)\subset S(\Sigma_n)^+$ such that $\|f_n-f\|_\infty\rightarrow0$ we can set 
\begin{equation}\label{eq:defextension}
\overline{V}(f)=\lim_n \overline{V}(f_n).
\end{equation}
By Proposition \ref{p:cauchysequences}, the limit above always exists and does not depend on the choice of $(f_n)\subset S(\Sigma_n)^+$.

Moreover, note that given $f,g\in B(\Sigma_n)^+$ and $(f_n),(g_n)\subset S(\Sigma_n)^+$ such that $\|f_n-f\|_\infty\rightarrow 0$ and $\|g_n-g\|_\infty\rightarrow0$ it follows that $f_n\vee g_n,f_n\wedge g_n\in S(\Sigma_n)^+$, $\|f_n\vee g_n-f\vee g\|_\infty\rightarrow 0$ and $\|f_n\wedge g_n-f\wedge g\|_\infty\rightarrow0$, thus we have
\begin{align*}
\overline V(f\vee g)+\overline V(f\wedge g)&=\lim_n \overline V(f_n\vee g_n)+\lim_n \overline V(f_n\wedge g_n)\\
&=\lim_n\overline V(f_n\vee g_n)+\overline V(f_n\wedge g_n)\\
&=\lim_n \overline V(f_n)+\overline V(g_n)\\
&=\lim_n \overline V(f_n)+\lim_n\overline V(g_n)\\
&=\overline V(f)+\overline V(g).
\end{align*}
This means that $\overline V$ is a continuous valuation on $B(\Sigma_n)^+$.

We show next that $\overline{V}$ is actually an extension of $\tilde{V}$.

\begin{prop}\label{p:extension} Let $\tilde V:C(S^{n-1})^+\longrightarrow \mathbb R^+$ and $\overline{V}:B(\Sigma_n)^+\longrightarrow \mathbb R^+$ be as above. Then, for every $f\in C(S^{n-1})^+$, $\tilde{V}(f)=\overline{V}(f)$.
\end{prop}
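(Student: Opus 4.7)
My plan is to show $\tilde V(f)=\overline V(f)$ by constructing, for each $n$, both a simple approximation $g_n$ of $f$ and a continuous function $h_n$ with two simultaneous properties: $|\tilde V(h_n)-\overline V(g_n)|\to 0$ (a quantitative version of Lemma \ref{l:aproxcontinuas}) \emph{and} $\|h_n-f\|_\infty\to 0$ (which does not follow from Lemma \ref{l:aproxcontinuas} as stated, and requires a more careful choice of the closed and open sets in its proof). The second property gives $\tilde V(h_n)\to\tilde V(f)$ by continuity of $\tilde V$ at the single point $f$ (so uniform continuity is not needed), while the first together with (\ref{eq:defextension}) gives $\tilde V(h_n)\to\overline V(f)$, so the two limits coincide.

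\textbf{Construction.} Let $\lambda=\|f\|_\infty$. For each $n$, choose a partition $0=t_0<t_1<\dots<t_N$ of $[0,\lambda+1]$ of mesh at most $1/n$, with the $t_k$'s avoiding the at most countable set of atoms of the finite pushforward measure $f_*\mu_\lambda$ on $[0,\lambda]$. Set $A_k=f^{-1}[t_k,t_{k+1})$ and $g_n=\sum_k t_k\chi_{A_k}$, so that $\|g_n-f\|_\infty\le 1/n$ and $\overline V(g_n)\to\overline V(f)$. For a parameter $\rho_n>0$, set
\[
G_k'=f^{-1}\bigl((t_k-\rho_n,\,t_{k+1}+\rho_n)\bigr),\qquad K_k=f^{-1}\bigl([t_k+\rho_n,\,t_{k+1}-\rho_n]\bigr).
\]
Continuity of $f$ makes $G_k'$ open and $K_k$ closed; and because the $t_k$'s are not atoms of $f_*\mu_\lambda$, by shrinking $\rho_n$ one can make $\mu_\lambda(G_k'\setminus K_k)$ uniformly in $k$ as small as the estimate in the proof of Lemma \ref{l:aproxcontinuas} requires. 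Following that proof verbatim, form $G_k=G_k'\cap\bigcap_{j\ne k}K_j^c$, apply Lemma \ref{split} to obtain a partition of unity $\varphi_k\prec G_k$ with $\bigvee_k\varphi_k=\uno$, and define $h_n=\bigvee_k t_k\varphi_k$. The same estimates then give $|\tilde V(h_n)-\overline V(g_n)|<\epsilon_n\to 0$.

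\textbf{Uniform convergence and conclusion.} For any $t\in S^{n-1}$, the indices $k$ with $\varphi_k(t)>0$ satisfy $t\in G_k'$, which forces $f(t)\in(t_k-\rho_n,\,t_{k+1}+\rho_n)$; in particular $t_k<f(t)+\rho_n$, hence $h_n(t)=\max_k t_k\varphi_k(t)<f(t)+\rho_n$. Since $\bigvee_k\varphi_k(t)=1$, some $k_0$ with $\varphi_{k_0}(t)=1$ exists, and for it $t_{k_0}>t_{k_0+1}-1/n>f(t)-1/n-\rho_n$, so $h_n(t)\ge t_{k_0}>f(t)-1/n-\rho_n$. Thus $\|h_n-f\|_\infty\le 1/n+\rho_n\to 0$, and continuity of $\tilde V$ at $f$ yields $\tilde V(h_n)\to\tilde V(f)$. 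Combined with $|\tilde V(h_n)-\overline V(g_n)|\to 0$ and $\overline V(g_n)\to\overline V(f)$, this proves $\tilde V(f)=\overline V(f)$. The main obstacle is precisely the simultaneous realization of the measure condition $\mu_\lambda(G_k'\setminus K_k)<\epsilon_n/N$ needed for the proof of Lemma \ref{l:aproxcontinuas} and the geometric condition that $G_k'$ sit inside a thin level-slab of $f$; the atom-avoiding choice of the partition points is what reconciles them.
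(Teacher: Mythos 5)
Your argument is correct and follows essentially the same route as the paper's proof of Proposition \ref{p:extension}: sandwich $f$ between simple functions $g_j$ built from level sets of $f$ and continuous functions $h_j$ built from a partition of unity subordinate to thin level-slab neighbourhoods of those level sets, arrange both $\|h_j-f\|_\infty\to0$ and $|\tilde V(h_j)-\overline V(g_j)|\to0$, and conclude using only continuity of $\tilde V$ at the single point $f$ together with \eqref{eq:defextension}. The one genuine difference is the device reconciling the measure bound $\mu_\lambda(G_k\setminus K_k)<\epsilon$ with the requirement that $G_k$ lie in a thin slab: you take $K_k$ and $G_k'$ to be preimages of intervals and control the measure by choosing partition points that are not atoms of $f_*\mu_\lambda$, whereas the paper obtains $K_i'\subset A_i\subset G_i'$ from regularity of $\mu_{\lambda+1}$ and then enlarges $K_i$ and shrinks $G_i$ by explicit level slabs; both work (note only that $t_0=0$ cannot avoid being an atom of $f_*\mu_\lambda$, which is harmless here because the weight $t_0=0$ annihilates every term at the bottom level).
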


\begin{proof}
Let $f\in C(S^{n-1})^+$. We will construct two sequences $(g_j)_{j\in \mathbb N}\subset S(\Sigma_n)^+$, $(f_j)_{j\in \mathbb N}\subset C(S^{n-1})^+$ such that, for every $j\in \mathbb N$,
\begin{eqnarray}
\|g_j -f\|_\infty\leq\frac{1}{j},\label{eq:g_jf}\\
\|f_j-f\|_\infty\leq \frac{2}{j},\label{eq:f_jf}\\
|\overline{V}(g_j)-\tilde{V}(f_j)|\leq  \frac{1}{j}.\label{eq:g_jf_j}
\end{eqnarray}

The proof will be  finished once we have constructed such sequences  $(g_j)_{j\in \mathbb N}$, $(f_j)_{j\in \mathbb N}$,  since, in that case,  $$\overline{V}(f)=\lim_j \overline{V}(g_j)=\lim_j \tilde{V}(f_j)=\tilde{V}(f).$$

We proceed to the construction of the sequences $(g_j)_{j\in \mathbb N}$, $(f_j)_{j\in \mathbb N}$. Let $\lambda=\|f\|_\infty$.

For each $j\in \mathbb N$ we make the following construction:
Let $\delta=\frac{1}{j}$.  We define $M=\left[\frac{\|f\|_\infty}{\delta}\right] + 1$, where $[x]$ denotes the integer part of $x$.
Let $A_1=f^{-1}([0,\delta])$ and, for $2\leq i \leq M$,
$$
A_i=f^{-1}\left(((i-1)\delta,i\delta]\right).
$$
Now, we define $g_j=\sum_{i=1}^M i\delta \chi_{A_i}$. Clearly, \eqref{eq:g_jf} follows, since
\begin{equation*}
\|g_j-f\|_\infty\leq \delta=\frac{1}{j}.
\end{equation*}

For $1\leq i \leq M$, we proceed as in Lemma \ref{l:aproxcontinuas} to choose a closed set $K'_i$ and an open set $G'_i$ such that $K'_i\subset A_i\subset G'_i$, and $\mu_{\lambda+1}(G'_i\setminus K'_i)<\frac{\delta}{14M}.$

Next, define $K_1=A_1$, and, for $2\leq i \leq M$,
$$
K_i=K'_i\cup f^{-1}\left(\left[\Big(i-\frac{99}{100}\Big)\delta,i\delta\right]\right).
$$

Now, for $1\leq i \leq M$, define $$G_i^{''}=G'_i\cap \bigcap_{k\not = i} K_k^c.$$

Finally, define $$G_1=G^{''}_1\cap f^{-1}\left(\left[0,\Big(i+\frac{1}{100}\Big)\delta\right)\right)$$ and, for $2\leq i \leq M$,  $$G_i=G^{''}_i\cap f^{-1}\left(\left((i-1)\delta,\Big(i+ \frac{1}{100}\Big)\delta\right)\right).$$

Then we have that:
\begin{itemize}
\item $K_i\subset A_i\subset G_i$ for $1\leq i\leq M$,
\item $\mu_{\lambda+1}(G_i\setminus K_i)<\frac{\delta}{14M}$ for $1\leq i\leq M$,
\item $K_i\cap G_{i'}=\emptyset $ if $i\not = i'$,
\item $\bigcup_{i=1}^M G_i=S^{n-1}$, and
\item $G_i\cap G_{i'}=\emptyset$ if $|i-i'|>1$.
\end{itemize}

We apply again  Lemma \ref{split} to choose a lattice partition of the unity $(\varphi_i)_{i=1}^M$ with $\varphi_i\prec G_i$ and $\bigvee_{i=1}^M \varphi_i=\uno$. Then, we  define $h_i=i\delta \varphi_i$ and set
$$
f_j=\bigvee_{i=1}^M h_i.
$$
Note that for every $t\in K_i$, since $K_i\cap G_{i'}=\emptyset $ for $i\not = i'$, we have
$$
f_j(t)=h_i(t)=i\delta=g_j(t).
$$
Otherwise, for $t\in G_i\setminus K_i$, since $G_i\cap G_{i'}=\emptyset$ if $|i-i'|>1$, there are only two possibilities: $t\in G_i\cap G_{i-1}$ or $t\in G_i\cap G_{i+1}$. In the former case we have $f_j(t),g_j(t)\in[(i-1)\delta,i\delta]$, while in the latter we have $f_j(t),g_j(t)\in[i\delta,(i+1)\delta]$.
Therefore,
$$
\|f_j-g_j\|_\infty\leq\delta.
$$
From this together with \eqref{eq:g_jf}, we get \eqref{eq:f_jf}:
$$
\|f_j-f\|_\infty\leq 2\delta.
$$
The coincidence of $h_i$ and $g_j$ on $K_i$, together with the fact that $\mu_{i\delta}(G_i\backslash K_i)<\frac{\delta}{14M}$, imply, by Lemma \ref{l:aproxnu} that
$$
\left|\tilde{V}(h_i)-\nu_{i\delta}(K_i)\right|<\frac{\delta}{2M}.
$$

Moreover, note that if $i'\not \in \{i-1, i, i+1\}$, then $(h_i\wedge h_{i'})=0$. Otherwise, if $i'\in \{i-1, i+1\}$, $\supp(h_i\wedge h_{i'}) \subset G_i\setminus K_i$. Also, for every three different indexes $i,i',i^{''}$, we have $h_i\wedge h_{i'}\wedge h_{i^{'''}}=0$. Therefore, applying \cite[Lemma 3.1]{Vi} again we get
\begin{align*}
\left|\tilde V(f_j)-\overline V(g_j)\right|&=\left|\tilde{V}\Big(\bigvee_{i=1}^M h_i\Big)-\sum_{i=1}^M \nu_{i\delta}(A_i)\right| \\
&\leq \left| \sum_{ i=1}^ M \tilde{V}(h_i) - \sum_{i=1}^M \nu_{i\delta}(A_i)\right| + \left|\sum_{i=1}^{M-1} \tilde{V}(h_{i}\wedge h_{i+1})\right| \\
&\leq�\frac{\delta}{2}+\frac{\delta}{14}<\delta=\frac{1}{j}.
\end{align*}
This proves \eqref{eq:g_jf_j} and the result follows.
\end{proof}

This finishes the proof of Theorem \ref{t:extension} and, hence, also the proof of Theorem \ref{t:extensiontoborel}. 

Now we can prove  Theorem \ref{t:integral}. The precise statement is

\begin{teo}\label{t:representation}
Let $\tilde V:C(S^{n-1})^+\rightarrow\mathbb R$ be a radial continuous valuation. If we consider its extension $\overline V:B(\Sigma_n)^+\rightarrow \mathbb R$ given by Theorem \ref{t:extensiontoborel}, then there exists a measure $\mu$ defined on the Borel $\sigma$-algebra of $S^{n-1}$ and a function $K:\mathbb R^+\times S^{n-1}\rightarrow \mathbb R$ such that, for every $g\in S(\Sigma_n)^+$, we have
$$
\overline V(g)=\int_{S^{n-1}} K(g(t),t)d\mu(t).
$$
\end{teo}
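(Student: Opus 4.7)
The plan is to reduce first to the case of a positive valuation via Theorem \ref{t:jordan}, and then to build a single finite Borel measure $\mu$ on $S^{n-1}$ with respect to which every representing measure $\nu_\lambda$ from Section \ref{controlmeasure} is absolutely continuous. The kernel $K$ will then be given, for each $\lambda$, by a Radon-Nikodym derivative $d\nu_\lambda/d\mu$.

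So assume first that $\tilde V$ is positive and $\tilde V(0)=0$ (the constant $\tilde V(0)$ can be subtracted off and reinserted into $K$ at the very end). Because of \eqref{def:mu}, $\mu_\lambda$ is monotone non-decreasing in $\lambda$, and by Lemma \ref{l:bbs} each $\mu_\lambda$ is a finite Borel measure on $S^{n-1}$. Therefore the series
$$
\mu=\sum_{k=1}^\infty \frac{\mu_k}{2^k(1+\mu_k(S^{n-1}))}
$$
defines a finite Borel measure on $S^{n-1}$, and for every $\lambda\geq 0$, picking an integer $k\geq\lambda$ gives $\mu_\lambda\leq \mu_k\leq 2^k(1+\mu_k(S^{n-1}))\mu$, so $\mu_\lambda\ll\mu$. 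Next I would establish that $\nu_\lambda\ll\mu_\lambda$ for every $\lambda\geq 0$: given a Borel set $A$ with $\mu_\lambda(A)=0$ and any closed $C\subset A$, regularity of $\mu_\lambda$ yields an open $G\supset C$ with $\mu_\lambda(G)$ arbitrarily small; by Lemma \ref{lambdaG} one can approximate $\zeta_\lambda(C)$ from above by $\tilde V(f)$ with $C\prec f/\lambda\prec G$ and $\|f\|_\infty\leq\lambda$, and Observation \ref{Vpequenha} bounds each such $\tilde V(f)$ by $\mu_\lambda(G)$. Thus $\nu_\lambda(C)=\zeta_\lambda(C)=0$, and inner regularity of $\nu_\lambda$ gives $\nu_\lambda(A)=0$. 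Combining with $\mu_\lambda\ll\mu$ yields $\nu_\lambda\ll\mu$.

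By the Radon-Nikodym theorem, for each $\lambda>0$ there is a non-negative $\mu$-measurable function $K(\lambda,\cdot)$ with $d\nu_\lambda=K(\lambda,\cdot)\,d\mu$; set $K(0,t)=0$. For any simple function $g=\sum_{i=1}^M a_i\chi_{A_i}\in S(\Sigma_n)^+$ with pairwise disjoint $A_i$, the definition $\overline V(g)=\sum_i\nu_{a_i}(A_i)$ gives
$$
\overline V(g)=\sum_{i=1}^M\int_{A_i}K(a_i,t)\,d\mu(t)=\int_{S^{n-1}}K(g(t),t)\,d\mu(t),
$$
since $g(t)=a_i$ on $A_i$ and the integrand vanishes outside $\bigcup_i A_i$ by the convention $K(0,\cdot)=0$. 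To dispose of the positivity assumption, apply this construction to the $V^+$ and $V^-$ furnished by Theorem \ref{t:jordan}, obtaining measures $\mu^\pm$ and kernels $K^\pm$; then take $\mu=\mu^++\mu^-$ with Radon-Nikodym densities $h^\pm=d\mu^\pm/d\mu$ and set $K(\lambda,t)=K^+(\lambda,t)h^+(t)-K^-(\lambda,t)h^-(t)$; finally reincorporate the constant $\tilde V(0)$ by adding $\tilde V(0)/\mu(S^{n-1})$ to $K$ (the case $\mu\equiv 0$ being trivial, since it forces $\overline V\equiv \tilde V(0)$). The one nontrivial step I anticipate is the absolute continuity $\nu_\lambda\ll\mu_\lambda$, but once Lemma \ref{lambdaG} and Observation \ref{Vpequenha} are invoked it falls out in a few lines, and the remainder of the argument is bookkeeping with Radon-Nikodym derivatives.
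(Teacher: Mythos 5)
Your proposal is correct and follows essentially the same route as the paper: Jordan decomposition via Theorem \ref{t:jordan}, a normalized weighted sum $\mu$ of the control measures $\mu_k$ over integer $k$, monotonicity of $\lambda\mapsto\mu_\lambda$ to get $\nu_\lambda\ll\mu_\lambda\ll\mu$, Radon--Nikodym, and evaluation on disjointly supported simple functions. Your explicit verification of $\nu_\lambda\ll\mu_\lambda$ via Lemma \ref{lambdaG}, Observation \ref{Vpequenha} and inner regularity is a detail the paper declares ``clear from the definitions,'' and your normalization $2^k(1+\mu_k(S^{n-1}))$ harmlessly avoids a possible division by zero.
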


\begin{proof}
We first consider the radial continuous valuation $\tilde{V}'(f)=\tilde{V}(f)-\tilde{V}(0)$ together with its extension $\overline{V}'$. Using Theorem \ref{t:jordan} we can write $\tilde{V}'=\tilde{V}_1-\tilde{V}_2$, both of them positive valuations with $\tilde{V}_i(0)=0$.

For $i=1,2$ and $\lambda\geq 0$, we consider the corresponding representing and control measures $\nu^i_\lambda$ and $\mu^i_\lambda$ as in Section \ref{controlmeasure}.

For every $\lambda\geq 0$, we define the measure $\mu_\lambda=\mu_\lambda^1 + \mu_\lambda^2$, and we also define the  normalized control measure $\mu$ by 
$$
\mu=\sum_{k=1}^\infty \frac{\mu_{k}}{2^k\mu_{k}(S^{n-1})}.
$$

It is clear from the definitions that, for every $\lambda\geq 0$, for $i=1,2$,  the measure $\nu^i_\lambda$ is continuous with respect to $\mu^i_\lambda$. Since the family of control measures $\mu_\lambda$ are clearly monotonous with respect to $\lambda$, it follows that, for each $\lambda\geq 0$, $\mu_\lambda$ is continuous with respect to $\mu$ and, hence, also $\nu_\lambda:=\nu_\lambda^1-\nu_\lambda^2$ is continuous with respect to $\mu$. By Radon-Nikodym's theorem, for every $\lambda\geq 0$ there exist a function $K'_\lambda\in L^1(\mu)$ such that
$$
\nu_\lambda(A)=\int_A K'_\lambda(t)d\mu(t),
$$
for every $A\in\Sigma_n$. Let $K':\mathbb R^+\times S^{n-1}\rightarrow \mathbb R$ be the function given by 
$$
K'(\lambda,t)=K'_\lambda(t).
$$
Using the fact that $K'(0,t)=0$ $\mu$-a.e. $t$, for every $A\in\Sigma_n$ we have
$$
\nu_\lambda(A)=\int_{S^{n-1}} K'(\lambda\chi_A(t),t)d\mu(t).
$$
Therefore, for $g=\sum_{j=1}^n a_j\chi_{A_j}\in S(\Sigma_n)^+$ with pairwise disjoint $(A_j)_{j=1}^n$, we have
$$
\overline{ V}'(g)=\sum_{j=1}^n \nu_{a_j}(A_j)=\sum_{j=1}^n \int_{A_j}K'(a_j,t)d\mu(t)=\int_{S^{n-1}} K'(g(t),t)d\mu(t).
$$

Defining $K(\lambda, t)=K'(\lambda, t)+ \tilde{V}(0)$, we finish the proof. 
\end{proof}

\section{Previous work and open questions}\label{s:openquestions}

Integral representations in the spirit of Riesz theorem have been previously considered for certain classes of (not necessarily linear) functionals on spaces $C(K)$. In particular, in a series of papers \cite{CF,FK,FK:69}, N. Friedman et al. studied integral representations for additive functionals in spaces $C(K)$. Let us briefly recall their main result and terminology:

\begin{defi}
Given a compact Hausdorff space $K$, a functional $\phi:C(K)\rightarrow \mathbb R$ is called:
\begin{enumerate}
\item \emph{Additive}, if for any $f_1,f_2,f\in C(K)$ with $|f_1|\wedge|f_2|=0$, it follows that
$$
\phi(f_1+f_2+f)=\phi(f_1+f)+\phi(f_2+f)-\phi(f).
$$
\item \emph{Bounded on bounded sets}, if for each $m>0$, there is $M(m)>0$ such that $|\phi(f)|\leq M(m)$ whenever $\|f\|_\infty\leq m$.
\item \emph{Uniformly continuous on bounded sets}, if for every $\epsilon>0$ and $m>0$, there is $\delta(\epsilon,m)>0$ such that $|\phi(f)-\phi(g)|\leq \epsilon$ whenever $\|f-g\|_\infty<\delta(\epsilon,m)$ with $\|f\|_\infty,\|g\|_\infty\leq m$. 
\end{enumerate}
\end{defi}

\begin{teo}\label{t:Friedman}
Given a compact Hausdorff space $K$, and a functional $\phi:C(K)\rightarrow \mathbb R$, the following are equivalent:
\begin{enumerate}
\item $\phi$ is additive, bounded on bounded sets and uniformly continuous on bounded sets. 
\item There exist a measure $\mu$ of finite variation defined on the Borel $\sigma$-algebra of $K$, and a function $f:\mathbb R\times K\rightarrow \mathbb R$ such that
\begin{enumerate}
\item $f(x,\cdot)$ is measurable for every $x$,
\item $f(\cdot,t)$ is continuous for $\mu$-almost every $t$,
\item for each $m>0$ there is $C_m>0$ such that $|f(x,t)|\leq C_m$ for $\mu$-almost every $t$, whenever $|x|\leq m$,
\end{enumerate}
such that for every $g\in C(K)$
$$
\phi(g)=\int_K f(g(t),t)d\mu(t).
$$
\end{enumerate}
\end{teo}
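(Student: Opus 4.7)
The plan is to reduce the problem to the positive case through the Jordan-type decomposition already established, and then assemble the representing measures constructed in Section \ref{controlmeasure} into a single Radon--Nikodym kernel. First, I would normalize by considering $\tilde V'(f) = \tilde V(f) - \tilde V(0)$, which is still a radial continuous valuation and now satisfies $\tilde V'(0) = 0$. Applying Theorem \ref{t:jordan}, I obtain positive radial continuous valuations $\tilde V_1, \tilde V_2 : C(S^{n-1})^+ \to \mathbb R^+$ vanishing at $0$ such that $\tilde V' = \tilde V_1 - \tilde V_2$. By Theorem \ref{t:extensiontoborel}, each $\tilde V_i$ extends uniquely to a continuous valuation $\overline V_i$ on $B(\Sigma_n)^+$, and hence $\overline V' = \overline V_1 - \overline V_2$.

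Next, for each $i \in \{1,2\}$ and each $\lambda \geq 0$, Section \ref{controlmeasure} furnishes a finite Borel control measure $\mu_\lambda^i$ defined by (\ref{def:mu}) and a finite Borel representing measure $\nu_\lambda^i$ defined by (\ref{def:nu}), with the property that for a simple function $g = \sum_{j=1}^m a_j \chi_{A_j} \in S(\Sigma_n)^+$ with pairwise disjoint $A_j$ one has $\overline V_i(g) = \sum_{j=1}^m \nu_{a_j}^i(A_j)$ by the construction preceding Proposition \ref{p:cauchysequences}. Setting $\mu_\lambda = \mu_\lambda^1 + \mu_\lambda^2$ and the signed measure $\nu_\lambda = \nu_\lambda^1 - \nu_\lambda^2$, it follows that
$$
\overline V'(g) = \sum_{j=1}^m \nu_{a_j}(A_j).
$$

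The core step is then to dominate all the $\nu_\lambda$ by a single finite measure so that Radon--Nikodym applies uniformly. To this end I would define
$$
\mu = \sum_{k=1}^\infty \frac{\mu_k}{2^k\, \mu_k(S^{n-1})},
$$
with the convention that any term where $\mu_k(S^{n-1}) = 0$ is omitted; this yields a finite Borel measure on $S^{n-1}$. A direct inspection of the defining supremum (\ref{def:mu}) shows that $\lambda \mapsto \mu_\lambda$ is monotone nondecreasing, since for each positive $\tilde V_i$ the admissible family $\{f \prec G : \|f\|_\infty \leq \lambda\}$ grows with $\lambda$. Consequently, for every $\lambda \geq 0$ there is an integer $k \geq \lambda$ with $\mu_\lambda \leq \mu_k$, and by construction $\mu_k$ is absolutely continuous with respect to $\mu$; combined with $\nu_\lambda^i \ll \mu_\lambda^i \leq \mu_\lambda$ this yields $\nu_\lambda \ll \mu$. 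By the Radon--Nikodym theorem, for each $\lambda \geq 0$ there exists $K'_\lambda \in L^1(\mu)$ with $\nu_\lambda(A) = \int_A K'_\lambda(t)\, d\mu(t)$ for every $A \in \Sigma_n$. Define $K' : \mathbb R^+ \times S^{n-1} \to \mathbb R$ by $K'(\lambda, t) = K'_\lambda(t)$ (setting $K'_0 = 0$, which is consistent since $\nu_0 = 0$ because $\tilde V_i(0) = 0$). Then for $g = \sum_{j=1}^m a_j \chi_{A_j}$ as above, using the disjointness of the $A_j$ and the fact that $g(t) = a_j$ on $A_j$, one computes
$$
\overline V'(g) = \sum_{j=1}^m \int_{A_j} K'(a_j, t)\, d\mu(t) = \int_{S^{n-1}} K'(g(t), t)\, d\mu(t).
$$
Finally, setting $K(\lambda, t) = K'(\lambda, t) + \tilde V(0)$ (and noting $\mu(S^{n-1}) = 1$ by the normalization of $\mu$) gives the sought representation.

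The main obstacle is the uniform domination of the family $\{\nu_\lambda\}_{\lambda \geq 0}$ by a single finite measure, which is what permits pointwise application of Radon--Nikodym to produce the kernel $K'$. Everything hinges on the monotonicity of $\mu_\lambda$ in $\lambda$ for the positive valuations $\tilde V_i$, together with the finiteness of each $\mu_k$ (which in turn follows from Lemma \ref{l:bbs}). Notably, no joint measurability of $(\lambda,t) \mapsto K(\lambda,t)$ is required for the statement, since the integral on the right-hand side only ever sees $K$ evaluated at the finitely many levels taken by the simple function $g$, where measurability in $t$ is automatic.
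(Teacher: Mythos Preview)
Your proposal does not prove the stated theorem. Theorem~\ref{t:Friedman} is a result of Friedman et al.\ quoted from the literature (references \cite{CF,FK,FK:69}); the paper states it without proof. What you have written is, essentially verbatim, the paper's proof of Theorem~\ref{t:representation} (the precise form of Theorem~\ref{t:integral}), which is a different statement.

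The mismatch is substantive, not cosmetic. Theorem~\ref{t:Friedman} concerns an arbitrary compact Hausdorff space $K$ and a functional $\phi:C(K)\to\mathbb R$ assumed to be additive, bounded on bounded sets, and \emph{uniformly continuous on bounded sets}; it asserts an integral representation valid for \emph{every} $g\in C(K)$, with a kernel $f$ satisfying the regularity properties (a)--(c), in particular continuity of $f(\cdot,t)$ for $\mu$-a.e.\ $t$. Your argument works only on $S^{n-1}$, assumes merely radial continuity (not uniform continuity on bounded sets), produces a representation only for simple Borel functions, and establishes none of the properties (a)--(c) of the kernel. Indeed, the paper explicitly flags that without uniform continuity on bounded sets one does not know the kernel is continuous (or even measurable) in the first variable; this is precisely the content of Questions~\ref{q:1} and~\ref{q:2}. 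You also do not address the converse implication $(2)\Rightarrow(1)$. In short, you have reproduced the correct proof of a different theorem; for Theorem~\ref{t:Friedman} itself the paper offers no argument to compare against, and your proposal does not supply one.
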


We will see now that additivity is the same as the property defining a valuation:

\begin{lema}\label{l:additivefunctional}
A mapping $\phi:C(K)_+\rightarrow \mathbb R$ is an additive functional if and only if for every $f,g\in C(K)_+$
$$
\phi(f)+\phi(g)=\phi(f\vee g)+\phi(f\wedge g).
$$
\end{lema}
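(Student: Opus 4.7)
The plan is to prove both directions by the same algebraic manipulation, exploiting two elementary lattice identities in $\mathbb R$ (valid pointwise on $K$): the translation invariance $(u+h)\vee(v+h)=(u\vee v)+h$ and $(u+h)\wedge(v+h)=(u\wedge v)+h$, and the fact that for $u,v\in C(K)_+$ with $u\wedge v=0$ we have $u\vee v=u+v$. Note that the definition of ``additive'' must be interpreted on $C(K)_+$ by requiring the identity $\phi(f_1+f_2+f)=\phi(f_1+f)+\phi(f_2+f)-\phi(f)$ for all $f_1,f_2,f\in C(K)_+$ with $f_1\wedge f_2=0$ (on positive functions $|f_i|=f_i$).

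For the implication from additivity to the valuation identity, I would start with arbitrary $f,g\in C(K)_+$ and set $h=f\wedge g$, $f_1=f-h=(f-g)\vee 0$, $f_2=g-h=(g-f)\vee 0$. Then $f_1,f_2,h\in C(K)_+$, $f_1\wedge f_2=0$, $f=f_1+h$, $g=f_2+h$, and $f\vee g=f_1+f_2+h$. Applying the additivity hypothesis to the triple $(f_1,f_2,h)$ yields
$$
\phi(f\vee g)=\phi(f_1+f_2+h)=\phi(f_1+h)+\phi(f_2+h)-\phi(h)=\phi(f)+\phi(g)-\phi(f\wedge g),
$$
which rearranges to the valuation identity.

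For the converse, assume $\phi(f)+\phi(g)=\phi(f\vee g)+\phi(f\wedge g)$ for all $f,g\in C(K)_+$, and pick $f_1,f_2,f\in C(K)_+$ with $f_1\wedge f_2=0$. Apply the valuation identity to the pair $f_1+f,\ f_2+f\in C(K)_+$. Using the translation invariance of $\wedge$ and $\vee$ together with $f_1\wedge f_2=0$ and $f_1\vee f_2=f_1+f_2$, one checks pointwise that $(f_1+f)\wedge(f_2+f)=f$ and $(f_1+f)\vee(f_2+f)=f_1+f_2+f$. The valuation identity then gives
$$
\phi(f_1+f)+\phi(f_2+f)=\phi(f_1+f_2+f)+\phi(f),
$$
which is precisely the additivity relation.

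There is no real obstacle here; the entire content is the lattice arithmetic recalled above. The only point worth being careful about is that $h=f\wedge g$ and the differences $f-h,g-h$ remain in $C(K)_+$ (they are continuous and non-negative by construction), so all invocations of the hypothesis stay within the domain $C(K)_+$ on which $\phi$ is defined.
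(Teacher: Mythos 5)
Your proof is correct and follows essentially the same route as the paper: the forward direction uses the decomposition $f_1=f-f\wedge g$, $f_2=g-f\wedge g$ with $f_1\wedge f_2=0$, and the converse applies the valuation identity to the pair $f_1+f$, $f_2+f$ together with translation invariance of $\vee$ and $\wedge$. Your additional care about the differences staying in $C(K)_+$ is a nice touch but adds nothing beyond what the paper implicitly uses.
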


\begin{proof}
Suppose first $\phi:C(K)_+\rightarrow \mathbb R$ is an additive functional, that is
$$
\phi(f_1+f_2+f)=\phi(f_1+f)+\phi(f_2+f)-\phi(f)
$$
whenever $f_1\wedge f_2=0$. Given $f,g\in C(K)_+$, let $f_1=f-f\wedge g$ and $f_2=g-f\wedge g$. It is clear that $f_1\wedge f_2=0$, hence
\begin{align*}
\phi(f\vee g)&=\phi(f+g-f\wedge g)=\phi(f_1+f_2+f\wedge g)\\
&=\phi(f_1+f\wedge g)+\phi(f_2+f\wedge g)-\phi(f\wedge g)\\
&=\phi(f)+\phi(g)-\phi(f\wedge g).
\end{align*}
Therefore, $\phi$ is a valuation.

Conversely, let us suppose that $\phi:C(K)_+\rightarrow \mathbb R$ is a valuation and take $f_1,f_2,f\in C(K)_+$ with $f_1\wedge f_2=0$. We have that
\begin{align*}
\phi(f_1+f)+\phi(f_2+f)&=\phi((f_1+f)\vee(f_2+f))+\phi((f_1+f)\wedge(f_2+f))\\
&=\phi((f_1\vee f_2)+f)+\phi((f_1\wedge f_2)+f)\\
&=\phi(f_1+f_2+f)+\phi(f),
\end{align*}
which yields that $\phi$ is an additive functional.
\end{proof}

As a side remark, note that  every valuation clearly defines an {\em orthogonally additive functional}, that is $\phi(f+g)=\phi(f)+\phi(g)$ whenever $f\wedge g=0$. However, not every orthogonally additive functional is a valuation, as the following simple example shows:

\begin{example}
Let $\phi:C(K)_+\rightarrow \mathbb R$ be given by $\phi(f)=\min\{f(t):t\in K\}.$ If $f\wedge g=0$, then we have that $\phi(f)=\phi(g)=\phi(f\wedge g)=0$, so trivially $\phi$ is orthogonally additive. However, we can consider a partition of $K$ into two sets $A,B$ with $A\cap B=\emptyset$ and functions $f_A,g_B\in C(K)_+$ such that $f_A(t)=1$ for every $t\in A$, $g_B(t)=1$ for every $t\in B$ and for some $t_A\in A$ and $t_B\in B$ we have $f_A(t_B)=0$ and $g_B(t_A)=0$. It follows that
$$
\phi(f_A)=\phi(g_B)=\phi(f_A\wedge g_B)=0,
$$
while $\phi(f_A\vee g_B)=1$. Therefore, $\phi$ cannot be a valuation. Note that $\phi$ is  continuous and satisfies $\phi(0)=0$.
\end{example}

As we mentioned above the functionals under consideration in the work of Friedman et al. satisfy the additional assumptions of being bounded and uniformly continuous on bounded sets. Note that by Lemmas \ref{l:bbs} and \ref{l:additivefunctional}, being bounded on bounded sets follows from continuity. We do not know whether continuity for valuations is actually enough to obtain uniform continuity on bounded sets. Note this last hypothesis is heavily used in \cite{CF,FK,FK:69} to obtain the desired integral representation.

Our main open questions now are the following

\begin{question}\label{q:1} Is every radial continuous valuation on the star bodies of $S^{n-1}$ uniformly continuous on bounded sets?
\end{question}

\begin{question}\label{q:2} Is the integral representation in Theorem \ref{t:integral} valid for every star body?
\end{question}

If Question \ref{q:1} would be true, then Theorem \ref{t:Friedman} would imply a positive answer to Question \ref{q:2}. So far, we do not even know that the function $K(\lambda, t)$ is measurable in the first variable for $\mu$ almost every $t$. Uniform continuity in bounded sets would imply that $K(\lambda, t)$ would be continuous $\mu$-almost everywhere in the first variable \cite{CF}, and the validity of the integral representation for continuous functions would follow.

The ``2 dimensional'' densities $K(\lambda,t)$ appearing in the integral representation of Theorem \ref{t:representation} have certain continuity in the first variable, which is however not yet sufficient to answer the questions above. 

\begin{lema}\label{l:uniformeenconjuntos}
Given $\lambda\geq 0$,  for every $\epsilon>0$ there exists $\delta>0$ such that for every Borel set $A\subset S^{n-1}$, if  $|\lambda-\lambda'|<\delta$ then $$|\nu_\lambda(A)-\nu_{\lambda'}(A)|<\epsilon.$$
\end{lema}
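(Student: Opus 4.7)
The plan is to derive the estimate directly from the continuity of the extension $\overline V$ constructed in Section \ref{extension}, by exhibiting, for each Borel set $A$, a simple function $g_A$ whose sup-norm distance from the constant function $\lambda\uno$ is bounded by $|\lambda-\lambda'|$ uniformly in $A$, and for which $\overline V(g_A)-\overline V(\lambda\uno)$ equals exactly $\nu_{\lambda'}(A)-\nu_\lambda(A)$. First I would reduce to the case of a positive valuation: by Theorem \ref{t:jordan} we can write $V=V^+-V^-$ and, as in the proof of Theorem \ref{t:representation}, set $\nu_\lambda=\nu_\lambda^+-\nu_\lambda^-$, so that a bound of $\epsilon/2$ for each of the positive pieces yields the desired bound of $\epsilon$ for the difference.

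Assuming now that $V$ is a positive radial continuous valuation, fix $\lambda,\lambda'\geq 0$ and a Borel set $A\subset S^{n-1}$, and consider the simple function
$$
g=\lambda'\chi_A+\lambda\chi_{A^c}\in S(\Sigma_n)^+.
$$
Since $A$ and $A^c$ are disjoint, the definition of $\overline V$ on simple functions gives $\overline V(g)=\nu_{\lambda'}(A)+\nu_\lambda(A^c)$, while $\overline V(\lambda\uno)=\nu_\lambda(A)+\nu_\lambda(A^c)$ (either directly, or by the additivity of the measure $\nu_\lambda$). Subtracting, one obtains the crucial identity
$$
\overline V(g)-\overline V(\lambda\uno)=\nu_{\lambda'}(A)-\nu_\lambda(A).
$$
Moreover $g-\lambda\uno=(\lambda'-\lambda)\chi_A$, so $\|g-\lambda\uno\|_\infty\leq|\lambda-\lambda'|$ independently of $A$.

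Finally, since $\overline V$ is continuous on $B(\Sigma_n)^+$ with respect to the sup norm (as established after Proposition \ref{p:cauchysequences}), given $\epsilon>0$ there exists $\delta>0$ such that $\|h-\lambda\uno\|_\infty<\delta$ implies $|\overline V(h)-\overline V(\lambda\uno)|<\epsilon$. Then for any $\lambda'$ with $|\lambda-\lambda'|<\delta$ we have $\|g-\lambda\uno\|_\infty<\delta$ and therefore $|\nu_{\lambda'}(A)-\nu_\lambda(A)|<\epsilon$, and this $\delta$ is the same for every Borel set $A$. The statement is really a repackaging of the trivial observation that the sup-norm distance between $g$ and $\lambda\uno$ is controlled by $|\lambda-\lambda'|$ with no dependence on $A$, which converts the pointwise continuity of $\overline V$ at the single point $\lambda\uno$ into uniformity in the set argument; there is no genuine obstacle beyond invoking the tools already proved.
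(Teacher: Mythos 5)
Your proof is correct and follows essentially the same route as the paper: the same test function $g=\lambda'\chi_A+\lambda\chi_{A^c}$, the same identity $\overline V(g)-\overline V(\lambda\uno)=\nu_{\lambda'}(A)-\nu_\lambda(A)$, and the same appeal to continuity of $\overline V$ at the single point $\lambda\uno$. The preliminary reduction to positive valuations via Theorem \ref{t:jordan} is a harmless extra precaution that the paper leaves implicit.
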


\begin{proof}
The continuity of $\overline{V}$ implies that, given $\epsilon$, there exists $\delta$ such that, for every $g\in B(\Sigma_n)^+$, if $\|\lambda\uno -g\|_\infty<\delta$ then $|\overline{V}(\lambda\uno)-\overline{V}(g)|<\delta$.

Let $A\subset S^{n-1}$ be a Borel set. Using that $\nu_\lambda(S^{n-1})=\overline{V}(\lambda\uno)$, and defining $g=\lambda' \chi_A + \lambda\chi_{A^c}$, we get  $$|\nu_\lambda(A)-\nu_{\lambda'}(A)|=|\nu_\lambda(A)+\nu_{\lambda}(A^c)-\nu_{\lambda'}(A)-\nu_\lambda(A^c)|=|\overline{V}(\lambda\uno)-\overline{V}(g)|<\epsilon.$$
\end{proof}

\begin{prop}\label{p:convergenciaennorma}
Let $\lambda\geq 0$. For every $\epsilon>0$ there exists $\delta>0$ such that, if $|\lambda-\lambda'|<\delta$ then $$\|K_\lambda-K_{\lambda'}\|_{L_1(\mu)}<2\epsilon. $$
\end{prop}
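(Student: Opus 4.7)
The plan is to reduce the $L_1$-norm control of the densities to the set-wise control already furnished by Lemma~\ref{l:uniformeenconjuntos}, by using the standard duality between $\|\cdot\|_{L_1(\mu)}$ and the total variation of the signed measure $\nu_\lambda-\nu_{\lambda'}$.

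First I would fix $\lambda\geq 0$ and $\epsilon>0$, and invoke Lemma~\ref{l:uniformeenconjuntos} to obtain $\delta>0$ such that $|\nu_\lambda(A)-\nu_{\lambda'}(A)|<\epsilon$ for every Borel set $A\subset S^{n-1}$, whenever $|\lambda-\lambda'|<\delta$. Next, recalling that $K_\lambda$ and $K_{\lambda'}$ are the Radon--Nikodym densities of the signed measures $\nu_\lambda=\nu^1_\lambda-\nu^2_\lambda$ and $\nu_{\lambda'}=\nu^1_{\lambda'}-\nu^2_{\lambda'}$ with respect to $\mu$ (as constructed in the proof of Theorem~\ref{t:representation}), I would introduce the Hahn-type decomposition of $S^{n-1}$ associated with the signed measure $\nu_\lambda-\nu_{\lambda'}$, namely the Borel sets
$$
A_+=\{t\in S^{n-1}: K_\lambda(t)\geq K_{\lambda'}(t)\},\qquad A_-=S^{n-1}\setminus A_+.
$$
Both sets are Borel since $K_\lambda-K_{\lambda'}$ is $\mu$-measurable.

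Then I would simply compute
\begin{align*}
\|K_\lambda-K_{\lambda'}\|_{L_1(\mu)}&=\int_{A_+}(K_\lambda-K_{\lambda'})\,d\mu+\int_{A_-}(K_{\lambda'}-K_\lambda)\,d\mu\\
&=\bigl(\nu_\lambda(A_+)-\nu_{\lambda'}(A_+)\bigr)+\bigl(\nu_{\lambda'}(A_-)-\nu_\lambda(A_-)\bigr),
\end{align*}
and apply Lemma~\ref{l:uniformeenconjuntos} to each of the two summands, which are each bounded by $\epsilon$, to conclude that the sum is bounded by $2\epsilon$.

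The proof is essentially routine once Lemma~\ref{l:uniformeenconjuntos} is in place; the only subtlety is noting that $\nu_\lambda$ is a signed (not positive) measure, so $K_\lambda$ may take both signs, and hence one genuinely needs the Hahn decomposition rather than a naive monotonicity argument. No real obstacle arises, and no additional regularity of $K(\cdot,t)$ in the first variable is required for this particular estimate.
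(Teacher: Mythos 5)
Your proof is correct and is essentially identical to the paper's: the set $A_+=\{t: K_\lambda(t)\geq K_{\lambda'}(t)\}$ is exactly the paper's $A=\varphi^{-1}([0,\infty))$ for $\varphi=K_\lambda-K_{\lambda'}$, and both arguments split the $L_1$-norm over $A_+$ and its complement and apply Lemma \ref{l:uniformeenconjuntos} to each piece. No further comment is needed.
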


\begin{proof}
Given $\varphi \in L_1(\mu)$, if we define $A=\varphi^{-1}([0,\infty))$, then $$\|\varphi\|_{L_1(\mu)}= \int_{S^{n-1}} \varphi(t)(\chi_A(t)-\chi_{A^c}(t) )d\mu(t).$$

Given $\epsilon>0$, we take $\delta$ as in Lemma \ref{l:uniformeenconjuntos} and considering $\varphi=K_\lambda-K_{\lambda'}$,   we have
\begin{align*}
\|K_\lambda-K_{\lambda'}\|_{L_1(\mu)}&=\int_{S^{n-1}} \varphi(t)(\chi_A(t)-\chi_{A^c}(t) )d\mu(t)\\
&=|\nu_\lambda(A)-\nu_{\lambda}(A^c)- \left(  \nu_{\lambda'}(A)-\nu_{\lambda'}(A^c)\right)|   \\
&\leq |\nu_\lambda(A)-\nu_{\lambda'}(A)| + |\nu_{\lambda}(A^c)  -\nu_{\lambda'}(A^c)|<2\epsilon. 
\end{align*}
\end{proof}

Finally, the next fact provides some additional information related to Question \ref{q:1}.

\begin{prop}\label{p:unifcontwk}
Let $V:C(S^{n-1})^+\longrightarrow \mathbb R$ be a radial continuous valuation. Then, for every weakly compact subset $W\subset  C(S^{n-1})^+$, $V$ is uniformly continuous on $W$.
\end{prop}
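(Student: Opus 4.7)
The plan is to argue by contradiction: first reduce to a positive valuation via Theorem \ref{t:jordan}, then use Eberlein--Smulian together with Egorov's theorem applied to the control measure from Section \ref{controlmeasure} to bring the problem into the scope of Lemma \ref{l:epsilonCauchy}.

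First I would replace $V$ by $V - V(\{0\})$, which has the same modulus of continuity on $W$, thereby reducing to the case $V(\{0\}) = 0$. Theorem \ref{t:jordan} then writes $V = V^+ - V^-$ with $V^\pm$ positive radial continuous valuations; since $|V(f) - V(g)| \leq |V^+(f) - V^+(g)| + |V^-(f) - V^-(g)|$, it suffices to prove the statement when $V \geq 0$. A weakly compact subset of $C(S^{n-1})$ is norm-bounded, so I set $\lambda := \sup_{f \in W}\|f\|_\infty < \infty$; the control measure $\mu_{\lambda+1}$ from \eqref{def:mu} is then a finite Borel measure on $S^{n-1}$.

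Assuming $V$ fails to be uniformly continuous on $W$, I would extract $\epsilon_0 > 0$ and sequences $(f_n), (g_n) \subset W$ with $\|f_n - g_n\|_\infty \to 0$ and $|V(f_n) - V(g_n)| \geq \epsilon_0$ for every $n$. By Eberlein--Smulian, $W$ is weakly sequentially compact, so passing to a subsequence I may assume $f_n \to f$ weakly for some $f \in W$. The point-evaluation functionals $\delta_t$ are continuous and linear on $C(S^{n-1})$, so $f_n(t) \to f(t)$ for every $t \in S^{n-1}$; the estimate $\|g_n - f_n\|_\infty \to 0$ then forces $g_n(t) \to f(t)$ pointwise as well. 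I form the interleaved sequence $h_{2n-1} := f_n$, $h_{2n} := g_n$, which lies in $C(S^{n-1})^+$, is bounded in $\|\cdot\|_\infty$ by $\lambda$, and converges pointwise to $f$. Applying Egorov's theorem on the finite measure space $(S^{n-1},\mu_{\lambda+1})$ produces, for any $\eta > 0$, a Borel set $B \subset S^{n-1}$ with $\mu_{\lambda+1}(B) < \eta$ such that $(h_k)$ converges uniformly on $A := S^{n-1}\setminus B$; in particular $(h_k|_A)$ is Cauchy for $\|\cdot\|_A$. Lemma \ref{l:epsilonCauchy} then yields, for every $\rho > 0$, an index $N$ such that $|V(h_p) - V(h_q)| \leq 16\eta + \rho$ whenever $p, q > N$. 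Choosing $\eta, \rho$ with $16\eta + \rho < \epsilon_0$ and specializing to $p = 2n-1$, $q = 2n$ with $2n-1 > N$ contradicts $|V(f_n) - V(g_n)| \geq \epsilon_0$.

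The hard part is the passage from the abstract weak compactness of $W$ to the concrete almost-uniform control demanded by Lemma \ref{l:epsilonCauchy}. The three ingredients that accomplish this are Eberlein--Smulian (turning weak compactness into weak sequential compactness), the characterization of weak convergence in $C(K)$ via point evaluations (turning weak into pointwise convergence for bounded sequences), and Egorov's theorem on the finite control measure $\mu_{\lambda+1}$ from Section \ref{controlmeasure} (turning pointwise into almost-uniform convergence); once these three steps are in place, Lemma \ref{l:epsilonCauchy} closes the argument.
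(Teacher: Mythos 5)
Your proof follows essentially the same route as the paper's: argue by contradiction, use Eberlein--Smulian to pass to a weakly (hence pointwise) convergent subsequence, apply Egorov's theorem with respect to the finite control measure to get almost-uniform convergence, and invoke Lemma \ref{l:epsilonCauchy} to contradict $|V(f_n)-V(g_n)|\geq\epsilon_0$. Your explicit reduction to the positive case via Theorem \ref{t:jordan} (needed so that the control measures of Section \ref{controlmeasure} are available) and the interleaving of $(f_n)$ and $(g_n)$ into a single Cauchy sequence are careful touches that the paper leaves implicit, but the argument is the same.
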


\begin{proof}
Suppose the contrary. That is, there exist a weakly compact set $W\subset  C(S^{n-1})^+$, $\epsilon>0$ and two sequences $(f_n)_{n\in\mathbb N}, (g_n)_{n\in\mathbb N}\subset W$ such that 
\begin{equation}\label{eq:fn-gnvaa0}
\|f_n-g_n\|_\infty\rightarrow 0,
\end{equation}
while for every $n\in\mathbb N$
\begin{equation}\label{eq:Vfn-Vgnseparado}
|V(f_n)-V(g_n)|\geq\epsilon.
\end{equation}

Taking into account that $W$ is weakly compact, by the Eberlein-Smulian Theorem (cf. \cite{Diestel}), passing to a further subsequence we can assume that $f_n\rightarrow f$ in the weak topology, for certain $f\in C(S^{n-1})^+$. 

Since the point evaluations are continuous linear functionals in $C(S^{n-1})$, we thus have that $f_n\rightarrow f$ pointwise. Let $\lambda=\sup_{h\in W}\|h\|_\infty$. Now, by Egoroff's Theorem (cf. \cite{Dudley}), there is a Borel set $A\subset S^{n-1}$ with $\mu_\lambda(S^{n-1}\setminus A)<\epsilon/17$ such that $\|f_n-f\|_A=\sup_{t\in A}|f_n(t)-f(t)|\rightarrow 0$. By \eqref{eq:fn-gnvaa0}, we also have $\|g_n-f\|_A\rightarrow0$. Therefore, Lemma \ref{l:epsilonCauchy} yields in particular that for some $N\in\mathbb N$ and every $n\geq N$
$$
|V(f_n)-V(g_n)|<\epsilon,
$$
which is a contradiction with \eqref{eq:Vfn-Vgnseparado}.
\end{proof}

In connection with Question \ref{q:1}, if $V:C(S^{n-1})^+\longrightarrow \mathbb R$ is a radial continuous valuation which is not uniformly continuous on bounded sets, then there must be some bounded sequence $(f_n)_{n\in\mathbb N}\subset C(S^{n-1})^+$ and perturbations $(\tilde f_n)_{n\in\mathbb N}$ with $\|f_n-\tilde f_n\|_\infty\rightarrow0$ but $|V(f_n)-V(\tilde f_n)|\geq \epsilon$ for some $\epsilon>0$.  Proposition \ref{p:unifcontwk} yields that no subsequence of $(f_n)_{n\in\mathbb N}$ can be weakly Cauchy, hence, by Rosenthal's $\ell_1$ Theorem (cf. \cite[Chapter XI]{Diestel}), the sequence $(f_n)_{n\in\mathbb N}$ must be equivalent to the unit basis of $\ell_1$ in the sense that $\|\sum_{n} a_nf_n\|\approx\sum_n |a_n|$ (and should be a Rademacher-like sequence, see \cite[Chapter XI]{Diestel} for details).

\end{document}